\def \F {{\mathbb F}}
\def \Z {{\mathbb Z}}
\def \cV {{\mathcal V}}
\newcommand{\commA}[2][]{\todo[#1,color=magenta]{A: #2}}
\newcommand{\intv}[2]{\{#1,\ldots,#2\}}
\newcommand{\gen}[1]{\langle #1\rangle}
\newtheorem{theorem}{Theorem}
\newtheorem*{T1}{Theorem~\ref{thm:a}}
\newtheorem{corollary}{Corollary}
\newtheorem{proposition}{Proposition}
\theoremstyle{remark}
\newtheorem*{remark}{Remark}
\newcommand{\thmtext}{Suppose $p>3$ or $p=3$ and $n$ is even. Assume $\gamma$ is a generator of $\F_q^*$, that is $T=q-1$.
    Then for every $\varepsilon >0$, there exists a positive constant $C_\varepsilon$ such that the additive index of $d_\gamma$ is at least $C_\varepsilon \frac{q^{1-\varepsilon}}{p}$.}
\date{\today}
\begin{document}
\title[Additive index of Diffie-Hellman mapping and discrete logarithm]{On the additive index of the Diffie-Hellman mapping and the discrete logarithm }

\author{Pierre-Yves Bienvenu}

\author{Arne Winterhof}

\email{\{pierre.bienvenu,arne.winterhof\}@oeaw.ac.at}

\address{Johann Radon Institute for Computational and Applied Mathematics, Austrian Academy of Sciences, Linz, Austria}

\begin{abstract}
  Several complexity measures such as degree, sparsity and multiplicative index for cryptographic functions including the Diffie-Hellman mapping and the discrete logarithm in a finite field have been studied in the literature. In 2022, Reis and Wang introduced another complexity measure, the additive index, of a self-mapping of a finite field. In this paper, under certain conditions, we determine lower bounds on the additive index of the univariate Diffie-Hellman mapping and a self-mapping of $\F_q$ which can be identified with the discrete logarithm in a finite field. 
\end{abstract}
\maketitle

\section{Introduction}
For the whole paper, we fix a prime number $p$ and an integer $n\ge 1$ and put $q=p^n$.
Every notion of linear algebra should be understood with respect to the ground field $\F_p$.

\subsection{Diffie-Hellman mapping and discrete logarithm}
For a divisor $T$ of $q-1$, let $\gamma\not=0$ be an element of (multiplicative) order $T$
in the finite field $\F_q$ with $q$ elements. Then the {\em Diffie-Hellman mapping  to the base $\gamma$} is the bivariate function $D(X,Y)$ defined by 
$$D(\gamma^x,\gamma^y)=\gamma^{xy},\quad 0\le x,y\le T-1.$$
Any efficient way of calculating $D(\gamma^x,\gamma^y)$ for a large set of pairs $(x,y)$ would have an important impact to public-key cryptography, in particular, the Diffie-Hellman key-exchange could be attacked, see for example \cite[Chapter~3]{mov97}.
However, there are several results in the literature that this is not the case in view of several complexity measures for $D$, see in particular the monograph \cite{sh03}.
For example, it can be shown that $D(X,Y)$ cannot be represented by a low-degree polynomial (for large $T$),
see \cite{elsh01,wi01}.

The {\em univariate Diffie-Hellman mapping to the base $\gamma$} is the function $d=d_\gamma$ defined by
$$d(\gamma^x)=\gamma^{x^2},\quad 0\le x\le T-1.
$$
Since 
$$D(\gamma^x,\gamma^y)^2=d(\gamma^{x+y})d(\gamma^x)^{-1}d(\gamma^y)^{-1},\quad 0\le x,y\le T-1,$$
and both inversion in $\F_q^*=\F_q\setminus\{0\}$ and square-root finding can be done in (probabilistic) polynomial time, see for example \cite[Sections~5.4, 7.1 and 7.2]{bash96}, any fast evaluation method for~$d(\gamma^x)$ would lead to one for $D(\gamma^x,\gamma^y)$. Again any univariate polynomial which interpolates $d(X)$ on a large subset must have large degree as well as large sparsity, see \cite{mewi02,mewi08,sh03} and references therein. 

Moreover, the {\em discrete logarithm} or {\em index} of $\gamma^x$ to the base $\gamma$ is 
$${\rm ind}_\gamma(\gamma^x)=x,\quad 0\le x\le T-1.$$
If  $T<p$, in particular, if $q=p$ is a prime, we may identify the discrete logarithm with a mapping from the subgroup of $\F_q^*$ of order $T$ into $\{0,1,\ldots,T-1\}\subset \F_q$, however, for non-prime $q$ we get only the discrete logarithm modulo the characteristic $p$ of $\F_q$ and lose information about the exact integer value of the discrete logarithm. Hence, it is more natural to study the following mapping:

Let $(\beta_1,\ldots,\beta_n)$ be an ordered basis of $\F_q$ over $\F_p$ and
$$\xi_x=x_1\beta_1+\cdots+x_n\beta_n$$
whenever
$$x=x_1+x_2p+\cdots+x_np^{n-1},\quad 0\le x_i<p,$$
be an ordering of the elements of $\F_q$.
For a fixed element $\gamma$ of $\F_q^*$ of order $T$ the discrete logarithm ind$_\gamma$ can be identified with a mapping $P$ of the subgroup of~$\F_q^*$ of order $T$ into~$\F_q$ defined by
$$P(\gamma^x)=\xi_x,\quad x=0,1,\ldots,T-1.$$

More recently, the multiplicative index of $f(X)=d(X)$ and $f(X)=P(X)$ was studied in \cite{iswi22},
that is the index of the largest subgroup $S$ of $G$ such that $f(X)X^{-r}$ is constant on each coset of $S$ for some positive integer $r$.

In this paper we prove similar results on the additive index introduced by Reis and Wang in \cite{rewa22} for $d(X)$ and $P(X)$, respectively.
In fact, in the spirit of the references mentioned above, we provide lower bounds (depending on $m$) for the additive index of any self-mapping
of $\F_q$ that coincides with either of these mappings on all but $m$ elements of their domain 
$\gen{\gamma}$, that is interpolation polynomials of these two mappings.

\subsection{The additive index}
A polynomial over $\F_{q}$ of the form
$$M(X)=\sum_{j=0}^{n-1}a_jX^{p^j},\quad a_j \in \F_q,$$
is called {\em linearised} (or {\em $p$-polynomial}), see for example \cite[Section~2.1.6]{HFF}.
Note that $M(X)$ is an $\F_p$-linear map if we consider $\F_{q}$ a vector space, that is,
$$M(a\xi+b\zeta)=aM(\xi)+bM(\zeta),\quad a,b\in \F_p,\quad \xi,\zeta \in \F_{q}.$$
In fact a simple counting argument shows that any $\F_p$-endomorphism of $\F_q$ ($\F_p$-linear self-mapping of $\F_q$) is induced by a linearised polynomial; we provide the details in the appendix (Proposition \ref{prop:linmaplinpoly}).
A linearised polynomial 
is sparse and has at most $n$ non-zero coefficients. Moreover, in a normal basis representation, calculating $p^j$th powers is just a shift of coordinates and thus considered free, see for example \cite[Section~2.3]{lini83}. Hence linearised polynomials can be efficiently evaluated in $O(n)$ $\mathbb{F}_{q}$-operations in normal basis representation
and in $O(n^2\log p)$ $\mathbb{F}_{q}$-operations using repeated squaring, see \cite[Section~5.4]{bash96}, taking an arbitrary basis representation.
Here, as usual, $f=O(g)$ means $|f|\le cg$ for some constant $c>0$.
Sometimes we also use $f\ll g$ or $g\gg f$ instead. If the implied constant may depend on some parameter $\varepsilon$, we write $
f\ll_\varepsilon g$.

A mapping $F:\F_q\rightarrow\F_q$ is of {\em additive index $p^k$}
and {\em codimension} $k$ 
if there is a linearised polynomial $M(X)\in \F_{q}[X]$ 
and a linear subspace $U_0$ of dimension $n-k$ with disjoint cosets $U_i$, $i=0,1,\ldots,p^k-1$, such that
$$F(\xi)=M(\xi)+a_i,\quad \xi\in U_i,\quad i=0,1,\ldots,p^k-1,$$
for some $a_0,a_1,\ldots,a_{p^k-1}\in \F_{q}$, see  \cite[Definition~4.2]{rewa22}.

\begin{remark}
    The counting argument alluded to above reveals that $M$ may be assumed to have degree at most
    $p^{n-k-1}$, see Corollary \ref{cor:equivdef} in the appendix.

\end{remark}

 Note that a mapping of additive index $p^k$ and codimension $k$ is also a mapping of additive index $p^{k'}$ and codimension $k'$ for any $k'$ with $k\le k'\le n$.
The {\em least codimension} of $F$ is the smallest $k$ such that $F$ has codimension  $k$. Then $p^k$ is the {\em least additive index} of $F$.
Thus if the least codimension is $0$, the mapping $F$ is simply an $\F_p$-affine function.
In the contrary, if the least codimension is as large as possible,
that is $n$, the mapping $F$ shows no linear behaviour.
It is worth noting that when $q$ is large, the proportion of maps
$\F_q\rightarrow\F_q$ that have codimension strictly less than $n$ 
among all self-mappings of $\F_q$
becomes
vanishingly small, see Corollary \ref{cor:almostall} in the appendix.
A generic mapping $\F_q\rightarrow\F_q$ has therefore least codimension $n$, and a mapping of codimension $<n$ is  very much untypical, ``non-random''.
Besides degree, sparsity and multiplicative index, the additive index is therefore another measure for the complexity of a function, that is, its unpredictability and thus suitability in cryptography.

Partly we will use a slight abuse of terminology when we write additive index instead of least additive index. Moreover, the additive index of the functions $d$ or $P$, respectively, which are defined only on a subset of $\F_q$, refers to any function on $\F_q$ which coincides with $d$ or $P$ on this subset.

\subsection{Statement of some results and structure of the paper}
We have a variety of results dealing with various cases -- different regimes according to the cardinality $T$ of 
$\gen{\gamma}$ as a function of $q$, different methods according to certain arithmetic conditions on $p$ and $n$ -- so that it would be too complicated to state all results here.
Further, we are able to tolerate functions departing from the Diffie-Hellman or discrete logarithm mappings on few points; thus, we provide lower bounds for the additive index of not only
these two mappings, but of any mappings coinciding with either of them on all but $m$ elements of $\gen{\gamma}$, the quality of the bound decaying with $m$ increasing.
However, we state here some appealing and simple results, corresponding to the important cases $T=q-1$
and $m=0$. We start with the Diffie-Hellman mapping.
\begin{theorem}
  \label{thm:a}
  \thmtext
\end{theorem}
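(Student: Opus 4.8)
The plan is to bound the dimension of any $\F_p$-subspace on which $d$ behaves affinely, and then to read off the bound on the additive index. Suppose $d$ has codimension $k$, so that there are a linearised polynomial $M$, a constant $a_0\in\F_q$ and a subspace $U$ of dimension $n-k$ with $d(\xi)=M(\xi)+a_0$ for every $\xi\in U\cap\F_q^*$. Since $|U|=p^{n-k}$ and hence $p^k=q/|U|$, it suffices to prove that $|U|\ll_\varepsilon q^{\varepsilon}p$ for every such $U$; this then gives least additive index $\gg_\varepsilon q^{1-\varepsilon}/p$. Write $A(X)=M(X)+a_0$ for the affine map and note that $A$ is $\F_p$-affine, so $A(c\xi)=cM(\xi)+a_0=c A(\xi)+(1-c)a_0$ for $c\in\F_p$.

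The engine of the argument is that $U$ is stable under multiplication by $\F_p^*=\gen{g}$, where $g=\gamma^{(q-1)/(p-1)}$ generates $\F_p^*$ and has order $p-1$. Fix $\xi=\gamma^x\in U\cap\F_q^*$ and put $\alpha=d(\xi)=\gamma^{x^2}$. For $c=g^{\,j}$ one has $c\xi=\gamma^{x+\delta j}$ with $\delta=(q-1)/(p-1)\equiv n\pmod{p-1}$, and expanding the square gives
$$d(c\xi)=\gamma^{(x+\delta j)^2}=\alpha\,g^{\,nj^2+2\bar x j},\qquad \bar x:=x\bmod(p-1).$$
On the other hand $c\xi\in U$, so $d(c\xi)=A(c\xi)=c\alpha+(1-c)a_0$. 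Comparing the two expressions yields, for every $j$,
$$\alpha\bigl(g^{\,nj^2+2\bar x j}-g^{\,j}\bigr)=a_0\bigl(1-g^{\,j}\bigr).\qquad(\ast)$$

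I would then split according to whether $a_0$ vanishes. If $a_0\neq0$, then for any $j$ with $g^{\,j}\neq1$ the right-hand side of $(\ast)$ is nonzero, so the bracket $g^{\,nj^2+2\bar x j}-g^{\,j}\in\F_p$ is a nonzero scalar and $\alpha=a_0(1-g^{\,j})(g^{\,nj^2+2\bar x j}-g^{\,j})^{-1}\in a_0\F_p^*$. Since $a_0$ is the \emph{same} constant for all of $U$, this forces $\gamma^{x^2}=d(\xi)$ to lie in the fixed set $a_0\F_p^*$ of cardinality $p-1$ for every $\xi=\gamma^x\in U\cap\F_q^*$. The number of $x\in\intv{0}{q-2}$ with $\gamma^{x^2}$ in a prescribed set of $p-1$ values is at most $(p-1)$ times the maximal number of solutions of a congruence $x^2\equiv w\pmod{q-1}$, and the latter is $\ll_\varepsilon q^{\varepsilon}$ by the standard bound on the roots of a quadratic congruence (of the order of $2^{\omega(q-1)}$). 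Hence $|U|-1\ll_\varepsilon q^{\varepsilon}p$, as required. This case carries the main term and the divisor-function factor responsible for the $q^{\varepsilon}$, and I expect it to be the cleaner half.

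The delicate case is $a_0=0$: here $(\ast)$ collapses to $nj^2+2\bar x j\equiv j\pmod{p-1}$ for all $j$, equivalently $(p-1)\mid 2n$ and $2\bar x\equiv 1-n\pmod{p-1}$, and it gives no control on $\alpha$, so $d$ agrees with the \emph{linear} map $M$ on $U$ and the counting above breaks down. This is where the hypothesis must enter. The solvability of $2\bar x\equiv 1-n$ already forces $n$ odd (as $p-1$ is even), so for $p=3$ this is the only obstruction and the assumption that $n$ be even excludes the case entirely, leaving $U$ trivial. For $p>3$ the case can genuinely occur, and here I would feed the \emph{additivity} of $M$ back into the multiplicative structure of $d$: the constraint $2\bar x\equiv 1-n$ confines $U\setminus\{0\}$ to a single coset of $\gen{\gamma^{(p-1)/2}}$ (of order $2(q-1)/(p-1)$), on which $d$ is again a Diffie--Hellman-type map on a smaller group, and the relations $d(\zeta+\zeta')=d(\zeta)+d(\zeta')$ for $\zeta,\zeta',\zeta+\zeta'\in U$ become strongly over-determined once each line carries $p-1\ge4$ points. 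Turning these additive relations into a count that improves the crude bound $|U|\ll q/p$ down to $|U|\ll_\varepsilon q^{\varepsilon}p$, while isolating the $2$-adic arithmetic of $p-1$ that separates $p>3$ from $p=3$ with $n$ odd, is the step I expect to be the main obstacle of the whole proof. Once $|U|\ll_\varepsilon q^{\varepsilon}p$ is secured in both cases, the stated lower bound on the additive index follows immediately.
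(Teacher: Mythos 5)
Your identity $(\ast)$ is correct, and it is in fact exactly the paper's key identity (Proposition~\ref{prop:key}) specialised to the neutral coset, i.e.\ Equation~\eqref{eq:key4}; but both halves of your case analysis contain genuine gaps. In the case $a_0\neq 0$, the counting lemma you invoke is false: the bound $2^{\omega(q-1)}\ll_\varepsilon q^\varepsilon$ for the number of solutions of $x^2\equiv w\pmod{q-1}$ holds only when $\gcd(w,q-1)$ is essentially squarefree. For general $w$ the count is governed by the square part of $q-1$ and can be polynomially large: for instance $q=7^4$ (admissible for the theorem) has $q-1=2400=2^5\cdot 3\cdot 5^2$, and $x^2\equiv 0\pmod{2400}$ has $20\approx q^{0.39}$ solutions. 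The only bound valid in general is $L_{q-1}=O(\sqrt{q-1})$ (the paper's Proposition~\ref{prop:sqroots}, due to Konyagin), and feeding that into your argument yields only $|U|\ll p\sqrt q$, i.e.\ $p^k\gg \sqrt q/p$, far short of $q^{1-\varepsilon}/p$. This is precisely why the paper does not count preimages inside one subspace: it counts the number of \emph{distinct values} of $d$, namely $N(T)$, the number of squares mod $T$, which satisfies $N(T)\gg_\varepsilon T^{1-\varepsilon}$ (Corollary~\ref{cor:nteps}); the coset identity shows $d$ takes at most roughly $p$ values on the non-exceptional part of each of the $p^k$ cosets, whence $N(T)\le f p^k + 8N(T)/9$ (the exceptional congruence class being controlled by Proposition~\ref{prop:squaresmodT}), which is Theorems~\ref{th:dh04} and~\ref{th:t2}. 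Value-counting is immune to square-root multiplicity; preimage-counting is not.

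The case $a_0=0$ is a second, independent gap: for $p>3$ with $(p-1)\mid 2n$ and $n$ odd (e.g.\ $p=7$, $n=3$) you only confine $U\setminus\{0\}$ to a coset of the subgroup of index $\frac{p-1}{2}$, giving $|U|\le 2\frac{q-1}{p-1}+1$, i.e.\ $p^k\gg p$ — far too weak — and the proposed upgrade via ``over-determined additive relations'' is, as you acknowledge, speculation rather than a proof. This obstruction is real, not a technicality: the paper itself contains a theorem that runs exactly your neutral-coset argument (the one concluding $p^k\geq \frac{q}{4m+1}$), and it must assume $n\not\equiv 0\bmod\frac{p-1}{2}$ precisely to exclude this situation. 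The paper's proof of Theorem~\ref{thm:a} avoids the dichotomy on $a_0$ altogether: on a general coset $V$ the identity reads $d(\xi)(h^{2x+T/f}-h)=a_{hV}-ha_V$, and whether or not the right-hand side vanishes, $d$ is pinned to at most $f$ values on the non-exceptional part of $V$, so the global count goes through in all cases. So your strategy diverges from the paper's at exactly the two points where it breaks; to repair it you would essentially have to convert your per-coset information into the paper's global value count.
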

The result is even better and more general for the discrete logarithm.
\begin{theorem}
    \label{th:disclogintro}
    Suppose again $T=q-1$. Then the additive index of the discrete logarithm mapping is at least $\frac{q}{n+2}$.
\end{theorem}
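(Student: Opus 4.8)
The plan is to reduce the theorem to a statement about a single coset and then exploit the clash between the additive (base-$p$ digit) structure and the multiplicative structure of $P$. Suppose $F$ coincides with $P$ on $\gen{\gamma}=\F_q^*$ and has additive index $p^k$, so that $\F_q$ is partitioned into cosets $U_0,\ldots,U_{p^k-1}$ of a subspace $V$ of dimension $d=n-k$, and on each coset $F(\xi)=M(\xi)+a_i$ for a linearised $M$ which, by the Remark, we may take of degree at most $p^{d-1}$. It suffices to show that $d$ is forced to be so small that $p^d\le n+2$; this gives $p^k=q/p^d\ge q/(n+2)$. Thus I fix one coset $U$ meeting $\gen{\gamma}$ and aim to prove: if $P$ agrees with an $\F_p$-affine map $L=M+a$ on $U\cap\gen{\gamma}$, then $|U|=p^d\le n+2$.

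First I would record two structural facts. Since $P$ is injective on $\gen{\gamma}$, so is $L$ on $U\cap\gen{\gamma}$; hence $M$ is injective on $V$ and the image $W:=L(U)$ is an affine subspace of dimension exactly $d$. As $x\mapsto\xi_x$ is a bijection of $\intv{0}{q-1}$ onto $\F_q$, exactly $p^d$ indices satisfy $\xi_x\in W$; writing $S=\set{x:\xi_x\in W}$, the injectivity and the matching cardinalities force $\set{\gamma^x:x\in S}$ to coincide with $U$ up to at most one point. In other words, the exponential map carries the digit-defined subspace $S$ essentially onto the additively-defined subspace $U$.

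The engine of the argument is a rigidity property of geometric progressions. Because $\gamma$ generates $\F_q^*$ it lies in no proper subfield, so $1,\gamma,\ldots,\gamma^{n-1}$ form an $\F_p$-basis of $\F_q$; the same holds for any Frobenius conjugate $\gamma^{p^{j}}$, since $\gcd(p^{j},q-1)=1$. Consequently, if $\eta,\delta\eta,\ldots,\delta^{\ell}\eta\in U$ for such a ratio $\delta$, then the differences $(\delta^i-1)\eta$ with $1\le i\le\ell$ lie in $V$ and span a space of dimension at least $\min(\ell,n-1)$, whence $d\ge\min(\ell,n-1)$. Thus a proper coset (that is $k\ge1$) contains no progression of ratio $\delta$ and length $n+2$. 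Now a run of $\ell+1$ consecutive integers inside $S$ produces exactly a progression $\gamma^{x},\ldots,\gamma^{x+\ell}\subseteq U$, and more generally a block of $S$ in which only the $j$-th base-$p$ digit varies (no carry) yields a progression of ratio $\gamma^{p^{j-1}}$; such a block lies in $S$ precisely when the linear part of $W$ contains $\beta_j$. So the task is to force a long enough progression from the geometry of $S$.

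Locating this progression is the main obstacle, and it is where the two structures collide most sharply. Runs of consecutive integers in $S$ are governed by base-$p$ carries rather than by $\F_p$-linearity: incrementing by one with no carry adds $\beta_1$, crossing one carry adds $\beta_1+\beta_2$, and so on, so $S$ contains a full block of size $p^{j}$ exactly when the linear part of $W$ contains $\beta_1,\ldots,\beta_{j}$. When $W$ is coordinate-aligned this immediately yields a block, hence a run, of length $>d+1$, contradicting the progression bound and pinning $p^d\le n+2$. The genuine difficulty is a coset whose image $W$ is defined by constraints mixing the low-order digits, so that $S$ may contain no long run at all; here I would instead analyse the $k=n-d$ trace-form equations $\mathrm{Tr}(\mu_r\xi)=c_r$ cutting out $U$ together with the digit constraints cutting out $W$, and extract a progression of ratio a suitable power of $\gamma$ directly from the correspondence $\gamma^{S}\approx U$. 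Making this carry-versus-dimension bookkeeping precise, and disposing of the borderline cases $k\in\set{0,1}$ by showing directly that $P$ is neither $\F_p$-affine nor of codimension one, is the crux of the proof.
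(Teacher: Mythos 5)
Your plan has a genuine gap at exactly the step you flag as ``the crux,'' and it is not a bookkeeping issue but the entire content of your claim. You reduce the theorem to a strictly stronger, single-coset statement: if $P$ agrees with \emph{one} affine map on \emph{one} coset $U$ of dimension $d$, then $p^d\le n+2$. To prove it you need a geometric progression of length $d+2$ inside $U$, obtained from an integer arithmetic progression inside the digit-defined set $S$; but in the generic case, where the linear part of $W=L(U)$ contains no basis vector $\beta_j$ and the defining constraints mix the digits, you offer no way to produce such a progression, and it is unclear that one must exist (carries destroy the linear structure, so a $d$-dimensional affine subspace of digit space need not obviously contain long integer APs). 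Moreover, even the pieces you do have are insufficient as stated: your rigidity lemma is proved only for ratios $\delta=\gamma^{p^j}$, whereas an AP of general common difference $a$ gives ratio $\gamma^a$, which may lie in a proper subfield $\F_{p^e}$; then the differences $(\gamma^{ia}-1)\eta$ span at most $e$ dimensions and the bound $d\ge\min(\ell,n-1)$ collapses, so you would also need to control the degree of $\gamma^a$ over $\F_p$, which nothing in your argument does. Even in the coordinate-aligned case, a single digit block yields a progression of length only $p$, which gives no contradiction when $p\le d+1$ (e.g.\ $p=3$ and $d$ large), so you need several aligned low-order coordinates that the geometry of $W$ need not provide. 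Finally, the borderline cases $k\in\{0,1\}$ are deferred without proof.

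The paper never restricts to one coset, and that is what makes its proof short: it uses that $F$ has the \emph{same} linear part $M$ on \emph{all} cosets. For $\gamma^x\in U$ and any $z$, this gives $F(\gamma^x+\gamma^z)=F(\gamma^x)-a_U+F(\gamma^z)$; choosing $z=x+y$ with $\gamma^y=\gamma-1$ (here is where $T=q-1$ enters) turns this into $\xi_{x+1}-\xi_x+a_U=P(\gamma^{x+y})$ for every $x\in\intv{0}{T-2}$ with $\gamma^x\in U$. By Proposition \ref{xix1} the left-hand side takes at most $n$ values, while by injectivity of the discrete logarithm the right-hand side takes at least $p^{n-k}-2$ distinct values, whence $p^{n-k}\le n+2$. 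Your digit-versus-multiplicative-structure idea is in the same spirit as Proposition \ref{xix1}, but the cross-coset identity is the missing ingredient that converts it into a two-line pigeonhole argument instead of an open-ended search for progressions inside affine subspaces of digit space.
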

We point out that we have much more general results, accommodating, under reasonable hypotheses,
$T$ as small as $q^\varepsilon$, for any $\varepsilon>0$. Further,  for the Diffie-Hellman mapping, we also have results for the case
$p=2$, and in certain cases we even obtain that the additive index is maximal, that is $q$. But to keep the introduction neat and tidy, we terminate here the exposition of our results. 

The paper is organised as follows.
We start with some preliminary results in Section~\ref{prel}.
In Section~\ref{DH} we prove lower bounds on the additive index of the univariate Diffie-Hellman mapping $d(X)$ and in Section~\ref{DL} of the mapping $P(X)$ which represents the discrete logarithm.

\section{Preliminaries}\label{prel}

\subsection{The Weil bound}

We recall the Weil bound for multiplicative character sums, see for example \cite[Theorem 5.41]{lini83}.
\begin{proposition}\label{weil}
Let $\chi$ be a multiplicative character of $\F_q$ of order $s>1$ and
$f(X)\in \F_q[X]$ be not of the $f(X)=ag(X)^s$ with $d$ different zeros (in its splitting field). Then we have
$$\left|\sum_{\xi\in \F_q}\chi(f(\xi))\right|\le (d-1)q^{1/2}.$$    
\end{proposition}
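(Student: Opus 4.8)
The plan is to derive this from the Riemann Hypothesis for curves over finite fields (the Hasse--Weil bound), applied to the Kummer cover $C\colon y^s=f(x)$. First I would normalize: since only the order of $\chi$ matters we may take $\chi$ to be a fixed character of exact order $s$ (so $s\mid q-1$), and factor $f(X)=a\prod_{i=1}^{d}(X-\alpha_i)^{e_i}$ over its splitting field, where $\alpha_1,\dots,\alpha_d$ are the $d$ distinct roots. The hypothesis that $f$ is not of the form $a g^s$ is precisely the statement that some $e_i$ is not divisible by $s$; this is what prevents the summand $\chi(f(\xi))$ from being constant on the complement of the roots, in which case the sum would be of size $\approx q$ and no square-root cancellation could hold.

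The engine is the $L$-function attached to the pair $(\chi,f)$. For each $r\ge1$ lift $\chi$ to $\F_{q^r}$ through the norm $N_{\F_{q^r}/\F_q}$ and set $S_r=\sum_{\xi\in\F_{q^r}}\chi(N_{\F_{q^r}/\F_q}(f(\xi)))$, so that $S_1$ is the sum we must bound. One forms $L(u)=\exp\!\big(\sum_{r\ge1}S_r u^r/r\big)$, and the crux is to show that $L(u)=\prod_{k=1}^{D}(1-\omega_k u)$ is a \emph{polynomial} of degree $D\le d-1$. This is where the geometry enters: $L$ is (up to the trivial factors of the zeta function) the characteristic polynomial of Frobenius acting on the $\chi$-isotypic part of $H^1$ of the smooth model of $C$ under the action of the group of $s$-th roots of unity, and a Riemann--Roch / ramification computation on this cover --- accounting for the $d$ branch points over the finite $\alpha_i$ and the behaviour over $x=\infty$ --- bounds the degree by $d-1$. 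Comparing power-series coefficients then gives $S_r=-\sum_{k}\omega_k^r$, in particular $S_1=-\sum_{k=1}^{D}\omega_k$.

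The final input is Weil's theorem that every inverse root satisfies $|\omega_k|=q^{1/2}$. Granting this, the triangle inequality yields
$$\Big|\sum_{\xi\in\F_q}\chi(f(\xi))\Big|=|S_1|=\Big|\sum_{k=1}^{D}\omega_k\Big|\le D\,q^{1/2}\le(d-1)q^{1/2},$$
which is the asserted bound. The two genuinely hard steps are, first, the degree count $D\le d-1$ --- the routine-looking but delicate bookkeeping of ramification of the Kummer cover, where care is needed when $s\mid\deg f$ or when infinity is ramified --- and, second, the Riemann Hypothesis for curves itself, which is the deep arithmetic-geometric ingredient and which I would simply quote. An alternative that avoids the Weil conjectures entirely is the elementary polynomial method of Stepanov and Schmidt, constructing an auxiliary polynomial that vanishes to high order at those $\xi$ with $\chi(f(\xi))$ in a prescribed coset; this replaces the cohomological degree count by an explicit dimension estimate but is considerably longer, so I would prefer the curve-theoretic route for a sketch.
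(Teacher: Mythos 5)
Your sketch is correct as far as it goes, but note that the paper does not prove this proposition at all: it is quoted verbatim from the literature, namely \cite[Theorem~5.41]{lini83}, so there is no in-paper argument to compare against. What you have written is essentially the classical Weil/$L$-function proof that underlies that cited result: lifting $\chi$ through the norm to define $S_r$, forming $L(u)=\exp\bigl(\sum_{r\ge1}S_ru^r/r\bigr)$, showing $L$ is a polynomial of degree $D\le d-1$ via the ramification/Riemann--Roch analysis of the Kummer cover $y^s=f(x)$, and then invoking $|\omega_k|=q^{1/2}$ (the Riemann Hypothesis for curves) plus the triangle inequality. The structure is sound, and your translation of the hypothesis (the garbled ``not of the $f(X)=ag(X)^s$'' in the statement means $f$ is not a constant times an $s$-th power, equivalently some multiplicity $e_i$ is not divisible by $s$ --- a condition that is stable under base change to $\F_{q^r}$, which your $S_r$ argument implicitly needs) is the right reading. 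The two steps you defer are indeed the genuinely hard ones, and you correctly identify the one delicate spot: the degree count $D\le d-1$, where the contribution of the place at infinity (the cases $s\mid\deg f$ versus $s\nmid\deg f$) must be handled carefully; at the level of a sketch this is acceptable to quote, since in a paper one would simply cite the result as the authors do. Your alternative route via Stepanov--Schmidt is also legitimate and is the elementary treatment found in Schmidt's book; it buys independence from the Weil conjectures at the cost of length, while the curve-theoretic route buys brevity and conceptual clarity at the cost of quoting deep inputs. In short: no gap relative to the paper, which treats the proposition as a black box, and your proposal is a faithful account of the standard proof behind the black box.
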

We will use the following consequence for certain character sums over subgroups.
\begin{corollary}\label{weilcor}
    Let $G$ be a subgroup of $\F_q^*$ and $\chi$ a non-trivial multiplicative character of $\F_q$.
    Then we have 
    $$\left|\sum_{\xi\in G}\chi(\xi-\xi^{-1})\right|\le 2q^{1/2}.$$
\end{corollary}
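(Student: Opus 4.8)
The plan is to remove the constraint that the summation variable lies in the subgroup $G$ by passing to a complete sum over $\F_q^*$, and then to invoke the Weil bound of Proposition~\ref{weil}. Write $\ell=(q-1)/|G|$ for the index of $G$, so that $G$ is precisely the set of $\ell$-th powers in $\F_q^*$ and the map $\eta\mapsto\eta^\ell$ is an $\ell$-to-$1$ surjection from $\F_q^*$ onto $G$. Applying this change of variables gives
$$\ell\sum_{\xi\in G}\chi(\xi-\xi^{-1})=\sum_{\eta\in\F_q^*}\chi\bigl(\eta^\ell-\eta^{-\ell}\bigr),$$
so it suffices to bound the right-hand side by $2\ell\, q^{1/2}$.

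Next I would make the argument polynomial. For every $\eta\in\F_q^*$ one has $\eta^\ell-\eta^{-\ell}=\eta^{-\ell}(\eta^{2\ell}-1)$, and since $\chi$ is multiplicative with $\chi(0)=0$, both sides of the resulting identity vanish exactly when $\eta^{2\ell}=1$, so that
$$\chi\bigl(\eta^\ell-\eta^{-\ell}\bigr)=\bar\chi(\eta)^{\ell}\,\chi\bigl(\eta^{2\ell}-1\bigr),\quad \eta\in\F_q^*.$$
Thus the complete sum is a hybrid character sum $\sum_{\eta}\psi(\eta)\chi(f(\eta))$ with $\psi=\bar\chi^{\,\ell}$ and $f(X)=X^{2\ell}-1$, equivalently $\sum_{\eta}\chi\bigl(F(\eta)\bigr)$ for the rational function $F(X)=X^\ell-X^{-\ell}$. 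I would then apply the Weil bound in the form valid for such products of multiplicative characters (equivalently, for $\chi$ evaluated at a rational function), the natural companion of Proposition~\ref{weil}; since $\deg f=2\ell$, this produces the estimate $2\ell\,q^{1/2}$, and dividing by $\ell$ gives the claim.

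The one point requiring care — and the main obstacle — is verifying the non-degeneracy hypothesis of the Weil bound, namely that $F(X)=X^{-\ell}(X^{2\ell}-1)$ is not a constant times a perfect $r$-th power of a rational function, where $r=\mathrm{ord}(\chi)>1$. Here the simple structure of the zeros is decisive: in odd characteristic $f(X)=X^{2\ell}-1$ is separable (as $p\nmid 2\ell$), so all its zeros are simple, and a function with a simple zero can be a perfect $r$-th power only for $r=1$; hence the hypothesis holds for every non-trivial $\chi$. The characteristic-$2$ case must be treated separately, since there $X^{2\ell}-1=(X^\ell-1)^2$ and the zeros are no longer simple; however $\ell$ is then odd, the pole orders of $F$ at $0$ and at infinity both equal $\ell$, and $\gcd(2,\ell)=1$ again forces $r=1$, so that $F$ is once more not a non-trivial perfect power and the same Weil estimate applies. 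Tracking the exact constant in each characteristic is routine and in both cases stays within the bound $2q^{1/2}$.
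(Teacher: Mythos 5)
Your argument is correct and follows the paper's proof in its first and main step: both complete the sum over the subgroup to a sum over $\F_q^*$ via the $\ell$-to-one power map (with $\ell=(q-1)/|G|$), so that $\ell\sum_{\xi\in G}\chi(\xi-\xi^{-1})=\sum_{\eta\in\F_q^*}\chi\bigl(\eta^\ell-\eta^{-\ell}\bigr)$, and both rest on the observation that non-degeneracy holds because the zeros at the $2\ell$-th roots of unity are simple in odd characteristic (with a separate parity argument in characteristic $2$, which you carry out correctly). Where you part ways is the final appeal to Weil: the paper substitutes $\eta^{-1}=\eta^{q-2}$, turning the summand into $\chi\bigl((\eta^{2\ell}-1)\eta^{(q-2)\ell}\bigr)$, a genuine polynomial with exactly $2\ell+1$ distinct zeros in odd characteristic, so that the stated Proposition~\ref{weil} applies verbatim, gives $2\ell q^{1/2}$, and keeps the proof self-contained; you instead split off $\bar\chi(\eta)^{\ell}$ and invoke a Weil bound for hybrid sums $\sum_\eta\psi(\eta)\chi(f(\eta))$, equivalently for $\chi$ of a rational function, which is a classical but unstated companion result. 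That route does work---the standard product-of-characters form of Weil's theorem bounds the sum by $(1+2\ell-1)q^{1/2}=2\ell q^{1/2}$ here---but the constant is precisely the one delicate point you wave off as ``routine'': formulations of the rational-function Weil bound that count the pole at infinity among the distinguished points of $F(X)=X^{\ell}-X^{-\ell}$ give $(2\ell+1)q^{1/2}$, which after dividing by $\ell$ is $(2+1/\ell)q^{1/2}>2q^{1/2}$, narrowly missing the claimed estimate. So if you keep your route, you should cite a version counting only affine zeros and poles (or the hybrid form with $\sum_i d_i-1$), or simply derive the rational-function bound by the same $\eta^{q-2}$ substitution the paper uses---which is exactly what that trick buys the authors.
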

\begin{proof}
Let $T$ be the order of $G$. Then the absolute value to be estimated equals
$$ \frac{T}{q-1}\left|\sum_{\zeta\in \F_q^*}\chi(\zeta^{(q-1)/T}-\zeta^{-(q-1)/T})\right|
=\frac{T}{q-1}\left|\sum_{\zeta\in \F_q}\chi((\zeta^{2(q-1)/T}-1)\zeta^{(q-2)(q-1)/T})\right|\le 2q^{1/2},$$
where we used $\xi^{-1}=\xi^{q-2}$ and the Weil bound Proposition~\ref{weil}. 
\end{proof}

\subsection{Number of solutions to the equation $x^2\equiv j\bmod T$}

For an integer $j$ let $L_T(j)$ be the number of solutions $x$ of
$$x^2\equiv j\bmod T,\quad x=0,1,\ldots,T-1,$$
and $L_T$ be the maximum of $L_T(j)$ over all integers $j$. The following bounds on $L_T$  are well-known.
\begin{proposition}
\label{prop:sqroots}
    We have 
    $$L_T=O(\sqrt{T}).$$
    Further, if $T=rk$, where $r$ is prime and $k$ is coprime to $r$,
    then we have 
    $$L_T=O(\sqrt{k}).$$
\end{proposition}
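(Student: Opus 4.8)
The plan is to reduce everything to prime powers via the Chinese Remainder Theorem and then exploit the structure of the unit groups $(\Z/p^e\Z)^*$. Writing the factorisation $T=\prod_{p^e\|T}p^e$ (where $p^e\|T$ means $p^e\mid T$ but $p^{e+1}\nmid T$), the congruence $x^2\equiv j\pmod T$ decouples into the independent congruences $x^2\equiv j\pmod{p^e}$, one for each prime power. Since by the Chinese Remainder Theorem the residues of $j$ modulo the various $p^e$ may be prescribed independently, the maximum factorises too, giving
\[
L_T=\prod_{p^e\|T}L_{p^e},\qquad L_{p^e}:=\max_j L_{p^e}(j).
\]
It thus suffices to bound each local factor $L_{p^e}$ uniformly in $j$, keeping careful track of the constants.

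For the local bound I would use the structure of the unit group. For odd $p$ the group $(\Z/p^e\Z)^*$ is cyclic, so squaring is two-to-one on units and any unit has at most $2$ square roots; writing a general $j$ as $p^{v}u$ with $u$ a unit and distinguishing the cases $v\ge e$ (where $x^2\equiv 0$ forces $p^{\lceil e/2\rceil}\mid x$, yielding $p^{\lfloor e/2\rfloor}$ solutions) and $v<e$ (which, after extracting $p^{v/2}$ when $v$ is even, reduces to the unit case modulo a smaller power), one finds $L_{p^e}\le 2p^{\lfloor e/2\rfloor}$. For $p=2$ the group $(\Z/2^e\Z)^*$ splits into two cyclic factors, squaring is four-to-one, and the same valuation analysis gives $L_{2^e}\le 4\cdot 2^{\lfloor e/2\rfloor}$.

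Here lies the only real subtlety, which I expect to be the main obstacle and would treat with care: naively multiplying the bounds $2p^{\lfloor e/2\rfloor}$ over all prime factors introduces a factor $2^{\omega(T)}$, where $\omega(T)$ denotes the number of distinct prime divisors; this is not $O(1)$ and would wreck the estimate. The remedy is that for every prime $p\ge 5$ one has $2\le\sqrt p$, so the spurious factor $2$ is absorbed and in fact $L_{p^e}\le p^{e/2}=\sqrt{p^e}$; a short case check of $e$ even versus $e$ odd, against $v=e$ versus $v<e$, confirms this. Only the two primes $p\in\{2,3\}$ resist the absorption, but each occurs at most once in $T$ and contributes only a bounded factor ($\le 4$ and $\le 2$ respectively). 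Collecting the local estimates therefore yields
\[
L_T=\prod_{p^e\|T}L_{p^e}\le 8\prod_{p^e\|T}\sqrt{p^e}=8\sqrt T,
\]
which is the first assertion $L_T=O(\sqrt T)$.

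Finally, for the refined bound I would invoke the hypothesis $T=rk$ with $r$ prime and $\gcd(k,r)=1$, so that $r$ divides $T$ exactly to the first power. The Chinese Remainder Theorem splits $L_T=L_r\cdot L_k$; since $x^2-j$ has at most two roots modulo the prime $r$ we have $L_r\le 2$, while $L_k=O(\sqrt k)$ by the first part. Hence $L_T\le 2\,L_k=O(\sqrt k)$, as claimed. The heart of the argument is the uniform local bound of the second step together with the absorption trick that prevents the $2^{\omega(T)}$ blow-up; the remainder is bookkeeping through the Chinese Remainder Theorem.
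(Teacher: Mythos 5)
Your proof is correct, but on the first assertion it takes a genuinely different route from the paper. The paper does not prove $L_T=O(\sqrt{T})$ from scratch: it simply quotes \cite[Theorem 2]{ko79}, a general bound on the number of solutions of a polynomial congruence of arbitrary degree modulo $T$, of which the quadratic case is a specialisation. Your argument is instead self-contained and elementary: you factor $L_T=\prod_{p^e\| T}L_{p^e}$ through the Chinese Remainder Theorem (correctly noting that the maximum itself factorises because the residues of $j$ can be prescribed independently), bound each local factor by a valuation analysis in $(\Z/p^e\Z)^*$, and --- this is the key point where a naive CRT argument would fail --- absorb the spurious factors ($2$ for odd $p$, $4$ for $p=2$) into $\sqrt{p}$ for $p\ge 5$, so that only the primes $2$ and $3$ contribute bounded losses and the $2^{\omega(T)}$ blow-up is avoided. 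What each approach buys: the citation gives brevity and a statement valid for congruences of any degree; your argument gives a proof readable without external input and an explicit constant, $L_T\le 8\sqrt{T}$. On the second assertion your argument coincides with the paper's: split $L_T=L_r\cdot L_k$ by the Chinese Remainder Theorem, bound $L_r\le 2$ (the paper records $L_r=2$ for odd primes $r$ and $L_2=1$), and apply the first part to $L_k$.
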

The second statement is particularly interesting when $r$ is large.
\begin{proof}
    The first statement is a particular case of \cite[Theorem 2]{ko79},
which proves an analogous result for polynomial equations of arbitrary degree.
The second statement is a direct consequence, since
$L_T=L_r\cdot L_k$ by the Chinese remainder theorem and $L_r=2$ for
any odd prime $r$ (while $L_2=1$).
\end{proof}
\subsection{Number of squares modulo $T$}
Let $N(T)$ be the number of squares modulo $T$, that is the residues $a\in\{0,1,\ldots,T-1\}$ such that the equation
$x^2\equiv a\bmod T$ admits a solution.
By the Chinese Remainder Theorem, this is a multiplicative function;
thus $N(T)=\prod_p N(p^{v_p(T)})$ where $v_p(T)$ is the $p$-adic valuation of $T$ for every prime $p$, and it suffices to evaluate $N$ on prime powers.
We quote the following results from \cite{stangl}.
\begin{proposition}
\label{prop:stangl}
    \begin{enumerate}
        \item Let $p$ be an odd prime and $\ell\geq 1$ an integer.
        Then 
        $$N(p^\ell)=\left\lbrace \begin{array}{cc}
         \frac{p^{\ell+1}+p+2}{2(p+1)},    & \ell \text{ even}, \\[8pt]
           \frac{p^{\ell+1}+2p+1}{2(p+1)},   & \ell \text{ odd}. 
         \end{array}
         \right.$$
         \item For $p=2$ and $\ell\ge 1$ we have 
         $$N(2^\ell)=\left\lbrace \begin{array}{cc}
         \frac{2^{\ell-1}+4}{3},    & \ell \text{ even}, \\[6pt]
           \frac{2^{\ell-1}+5}{3},   & \ell \text{ odd}. 
         \end{array}
         \right.$$
    \end{enumerate}
\end{proposition}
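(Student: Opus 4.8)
The plan is to prove the two counting formulas directly and self-containedly, by sorting the residues modulo $p^\ell$ according to their $p$-adic valuation, reducing the count to a count of \emph{square units} modulo smaller prime powers, and then evaluating the resulting geometric sum. First I would classify each residue $x\in\{0,\ldots,p^\ell-1\}$ by $j=v_p(x)$, so that $x^2$ has valuation $2j$ modulo $p^\ell$. If $2j\ge \ell$ then $x^2\equiv 0\pmod{p^\ell}$, contributing the single square $0$. If $2j<\ell$, writing $x=p^j u$ with $u$ a unit gives $x^2\equiv p^{2j}u^2\pmod{p^\ell}$, a value depending only on $u^2$ modulo $p^{\ell-2j}$. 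Squares coming from different $j$ have different valuations and are therefore distinct, while for fixed $j$ the distinct values $p^{2j}u^2\bmod p^\ell$ are in bijection with the square units modulo $p^{\ell-2j}$. Writing $S(m)$ for the number of square units modulo $p^m$, this yields the master formula
$$N(p^\ell)=1+\sum_{0\le 2j<\ell}S(\ell-2j).$$

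Next I would compute $S(m)$. Since the squaring map on the finite abelian group $(\Z/p^m\Z)^*$ has kernel its $2$-torsion subgroup, $S(m)=\phi(p^m)/t_m$, where $t_m$ denotes the order of that $2$-torsion subgroup. For odd $p$ the unit group is cyclic of even order, so $t_m=2$ and $S(m)=\phi(p^m)/2=\tfrac12 p^{m-1}(p-1)$. Substituting this into the master formula and reindexing by $m=\ell-2j$ — which runs through the even values $2,4,\ldots,\ell$ when $\ell$ is even and the odd values $1,3,\ldots,\ell$ when $\ell$ is odd — reduces part (1) to summing a geometric series with ratio $p^2$; simplifying the result using $p^2-1=(p-1)(p+1)$ produces exactly the two stated closed forms.

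The delicate part is $p=2$, where the unit group is no longer cyclic. Here I would recall that $(\Z/2^m\Z)^*$ is trivial for $m=1$, is cyclic of order $2$ for $m=2$, and is isomorphic to $\Z/2\Z\times\Z/2^{m-2}\Z$ for $m\ge 3$; consequently $t_m$ equals $1$, $2$, and $4$ respectively, so that $S(1)=S(2)=1$ while $S(m)=2^{m-3}$ for $m\ge 3$. The main obstacle is thus not conceptual but a matter of bookkeeping: the anomalous small-modulus values $S(1),S(2)$ must be split off before the generic tail $\sum 2^{m-3}$ — a geometric series with ratio $4$ — can be summed. Feeding these values into the master formula and again summing over $m=\ell-2j$ (even $m$ from $2$ to $\ell$ for $\ell$ even, odd $m$ from $1$ to $\ell$ for $\ell$ odd) collapses to $\tfrac{2^{\ell-1}+4}{3}$ and $\tfrac{2^{\ell-1}+5}{3}$ respectively, giving part (2). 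I would finish by checking the boundary cases $\ell=1,2,3$ directly against the closed forms to confirm that the small-modulus corrections have been absorbed correctly.
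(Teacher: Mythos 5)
Your proof is correct, but it cannot be ``the same approach as the paper'' because the paper offers no proof at all here: it simply quotes the two closed formulas from Stangl's article \cite{stangl}. Your write-up therefore supplies a self-contained verification of an outsourced result, and the details check out. The valuation decomposition is sound: for $2j<\ell$ the squares of exact valuation $2j$ modulo $p^\ell$ are in bijection with the square units modulo $p^{\ell-2j}$ (multiplication by $p^{2j}$ is injective from $\Z/p^{\ell-2j}\Z$ into $p^{2j}\Z/p^\ell\Z$), squares of different valuations cannot collide, and the residues with $2v_p(x)\ge \ell$ contribute exactly the single square $0$, giving your master formula $N(p^\ell)=1+\sum_{0\le 2j<\ell}S(\ell-2j)$. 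Your values $S(m)=\tfrac12 p^{m-1}(p-1)$ for odd $p$, and $S(1)=S(2)=1$, $S(m)=2^{m-3}$ for $m\ge 3$ when $p=2$, are the standard consequences of the structure of $(\Z/p^m\Z)^*$, and the geometric sums do collapse to the stated expressions in all four parity cases: for odd $p$ and even $\ell$ one gets $1+\tfrac{p(p^{\ell}-1)}{2(p+1)}=\tfrac{p^{\ell+1}+p+2}{2(p+1)}$, for odd $\ell$ one gets $1+\tfrac{p^{\ell+1}-1}{2(p+1)}=\tfrac{p^{\ell+1}+2p+1}{2(p+1)}$, and for $p=2$ the anomalous terms give $1+1+\tfrac{2^{\ell-1}-2}{3}=\tfrac{2^{\ell-1}+4}{3}$ (even $\ell$) and $2+\tfrac{2^{\ell-1}-1}{3}=\tfrac{2^{\ell-1}+5}{3}$ (odd $\ell\ge 3$); the boundary checks $N(2)=2$, $N(4)=2$, $N(8)=3$ and $N(p)=(p+1)/2$ all agree. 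In effect your argument is the standard recursion $N(p^\ell)=S(\ell)+N(p^{\ell-2})$ unrolled, which is essentially how the cited source proceeds; what your version buys is independence from the reference, at the price of the $p=2$ bookkeeping for $S(1)$ and $S(2)$, which you handled correctly.
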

\begin{corollary}
    \label{cor:nteps}For any odd prime $p$ and integer $\ell\geq 1$, we have
    $$N(p^\ell)\geq \frac{p^\ell}{3}\quad\mbox{and}\quad N(2^\ell)\geq \frac{2^\ell}{6}.$$
    For every $\varepsilon>0$ and integer $T\geq 1$, we have 
    $$N(T)\gg_\varepsilon T^{1-\varepsilon}.$$
\end{corollary}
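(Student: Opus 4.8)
The plan is to deduce both statements directly from the exact formulas of Proposition~\ref{prop:stangl}, together with the multiplicativity $N(T)=\prod_p N(p^{v_p(T)})$ noted just before that proposition. Everything reduces to discarding lower-order terms in the prime-power formulas and then controlling the number of prime factors of $T$.

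First I would establish the prime-power bounds by keeping only the leading term of each numerator. For odd $p$ and either parity of $\ell$, the numerator in Proposition~\ref{prop:stangl} exceeds $p^{\ell+1}$, so $N(p^\ell)\geq \frac{p^{\ell+1}}{2(p+1)}$; the desired inequality $N(p^\ell)\geq p^\ell/3$ then reduces to $3p\geq 2(p+1)$, which holds for every $p\geq 2$ and in particular for all odd primes. For $p=2$, both formulas give $N(2^\ell)\geq \frac{2^{\ell-1}}{3}=\frac{2^\ell}{6}$ after dropping the positive constant in the numerator. These are routine manipulations.

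Next I would combine the prime-power estimates via multiplicativity. Using the uniform bound $N(p^{v_p(T)})\geq p^{v_p(T)}/6$, valid for every prime (the constant $1/6$ being the worst case, occurring at $p=2$), I obtain
$$N(T)=\prod_{p\mid T}N(p^{v_p(T)})\geq \prod_{p\mid T}\frac{p^{v_p(T)}}{6}=\frac{T}{6^{\omega(T)}},$$
where $\omega(T)$ denotes the number of distinct prime divisors of $T$ and I have used that $\prod_{p\mid T}p^{v_p(T)}=T$. It then remains to absorb the factor $6^{\omega(T)}$ into $T^\varepsilon$.

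The main point, and the only non-elementary step, will be the bound $6^{\omega(T)}\ll_\varepsilon T^\varepsilon$. I would prove it by the standard divisor-function argument, splitting the factorisation of $T$ according to whether $p\geq 6^{1/\varepsilon}$ or $p<6^{1/\varepsilon}$: each large prime contributes a factor $6\le p^{\varepsilon}\le p^{v_p(T)\varepsilon}$, while the finitely many small primes contribute at most a constant $C_\varepsilon$ depending only on $\varepsilon$ (one may take $C_\varepsilon=6^{\pi(6^{1/\varepsilon})}$, with $\pi$ the prime-counting function). Multiplying these estimates yields $6^{\omega(T)}\le C_\varepsilon T^\varepsilon$, and hence $N(T)\geq C_\varepsilon^{-1}T^{1-\varepsilon}$, which is the claimed bound.
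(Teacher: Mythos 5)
Your proof is correct and follows essentially the same route as the paper: prime-power lower bounds read off from Proposition~\ref{prop:stangl}, multiplicativity to get $N(T)\geq T/6^{\omega(T)}$ (the paper keeps the slightly sharper $T/(2\cdot 3^{\omega(T)})$), and absorption of the $\omega(T)$-factor into $T^\varepsilon$. The only difference is in that last step, where you give the standard elementary small/large-prime splitting with an explicit constant $C_\varepsilon=6^{\pi(6^{1/\varepsilon})}$, while the paper simply invokes the well-known bound $\omega(T)\ll \log T/\log\log T$; both justifications are valid.
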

\begin{proof}
    The lower bounds for $N(p^\ell)$ follow from Proposition \ref{prop:stangl}. As a result, by multiplicativity,
    $N(T)\geq T/(3^{\omega(T)}\cdot 2)$, where $\omega$ denotes the number of distinct prime factors of $T$.
    As is well known, we have $\omega(T)\ll \frac{\log T}{\log\log T}$
    so $3^{\omega(T)}\leq T^{O(1/\log\log T)}\ll_\varepsilon T^\varepsilon$. This concludes the proof.
\end{proof}
We will need the fact that no single class modulo $s$ for some $s|T$ concentrates almost all the squares modulo $T$. To make this precise, for any integer $a$ and $s$ where $s\mid T$, let $N(a,s,T)$ be the number
of squares $x^2\bmod T$ such that
$x\equiv a\bmod s$ and $N(s,T)=\max_a N(a,s,T)$.
Obviously $N(T,T)=1$ and $N(1,T)=N(T)$. For $1<s<T$ we use the following
result.
\begin{proposition}
\label{prop:squaresmodT}
For any integers $s,T$ where $s\mid T$ and $s>1$, we have
$$N(s,T)\leq \frac{8N(T)}{9}.$$
\end{proposition}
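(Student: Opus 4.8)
The plan is to exploit multiplicativity and reduce the statement to prime powers, and then to a single prime. Write $T=\prod_p p^{e_p}$ and $s=\prod_p p^{f_p}$ with $0\le f_p\le e_p$. Since the Chinese Remainder Theorem identifies a square $x^2\bmod T$ with the tuple $(x^2\bmod p^{e_p})_p$ and the condition $x\equiv a\bmod s$ with the conditions $x\equiv a\bmod p^{f_p}$, the local square roots may be chosen independently. Hence $N(a,s,T)=\prod_p N(a,p^{f_p},p^{e_p})$ and, maximising over $a$ coordinate by coordinate, $N(s,T)=\prod_p N(p^{f_p},p^{e_p})$ while $N(T)=\prod_p N(p^{e_p})$. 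Every factor $N(p^{f_p},p^{e_p})/N(p^{e_p})$ is at most $1$, and because $s>1$ at least one prime has $f_p\ge 1$; thus it suffices to prove the local bound
\begin{equation*}
N(p^f,p^e)\le \frac{8}{9}\,N(p^e)\qquad(1\le f\le e)
\end{equation*}
for every prime power $p^e$, extract one such factor, and bound the remaining factors trivially.

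Next I would reduce to $f=1$. For any $a$ the class $\{x\equiv a\bmod p^f\}$ is contained in $\{x\equiv a\bmod p\}$, so squaring the former set produces a subset of the squares produced by the latter; therefore $N(a,p^f,p^e)\le N(a\bmod p,p,p^e)$, and taking maxima yields $N(p^f,p^e)\le N(p,p^e)$. It remains to bound $N(p,p^e)$.

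For odd $p$ the count is clean. If $a\not\equiv 0\bmod p$, every $x\equiv a\bmod p$ is a unit and squaring is injective on this class (from $x^2=y^2$ one gets $y=\pm x$, and $-a\not\equiv a\bmod p$ excludes the minus sign), so the $p^{e-1}$ elements of the class give $p^{e-1}$ distinct squares; if $a\equiv 0\bmod p$ one obtains only the $N(p^{e-2})<p^{e-1}$ squares divisible by $p^2$. Hence $N(p,p^e)=p^{e-1}$, and the lower bound $N(p^e)\ge \frac{p^{e+1}}{2(p+1)}$ from Proposition~\ref{prop:stangl} gives
\begin{equation*}
\frac{N(p,p^e)}{N(p^e)}\le \frac{2(p+1)}{p^2}\le \frac{8}{9},
\end{equation*}
with equality only at $p=3$, which is exactly what pins down the constant $8/9$ (approached in the limit $p=3$, $f=1$, $e\to\infty$).

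The main obstacle is the prime $p=2$, where the unit group is not cyclic and squaring is $4$-to-$1$ on units. Here the odd class contributes the $2^{e-3}$ quadratic residues among units, while the even class contributes $N(2^{e-2})$ squares; using $N(2^e)\ge 2^e/6$ from Corollary~\ref{cor:nteps}, both fractions are at most $3/4$ once the small exponents $e\le 3$ are checked by hand. Reassembling the local bounds through the product formula then gives $N(s,T)\le \frac{8}{9}N(T)$. I expect the bookkeeping of the fibres of the squaring map, in particular this $p=2$ analysis together with the verification that the zero class never dominates, to be the only delicate point.
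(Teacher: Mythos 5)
Your proposal is correct and follows essentially the same route as the paper's proof: the Chinese Remainder Theorem reduction $N(s,T)=\prod_p N(p^{f_p},p^{e_p})$, the reduction from $f\ge 1$ to $f=1$, the count $N(a,p,p^e)=p^{e-1}$ for unit classes at odd primes (with the zero class strictly smaller), and a separate odd/even class analysis at $p=2$, with the constant $8/9$ pinned down at $p=3$ in both arguments. The only cosmetic difference is that you invoke the explicit bound $N(p^e)\geq \frac{p^{e+1}}{2(p+1)}$ directly from Proposition~\ref{prop:stangl}, where the paper argues via monotonicity of $N(p^\ell)/p^{\ell-1}$ toward the limit $\frac{p^2}{2(p+1)}$; your version even handles $e=1$ uniformly, which the paper treats as a separate case.
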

\begin{proof}
Write $s=\prod_pp^{k_p}$
and
$T=\prod_pp^{\ell_p}$, where $k_p\leq\ell_p$ for every $p$.
Then by the Chinese Remainder Theorem,
$N(s,T)=\prod_p N(p^{k_p},p^{\ell_p})$.
Thus it suffices to bound 
$N(p^k,p^\ell)$ for $\ell\geq k>0$, which in turn is at most
$N(p,p^\ell)$.
Since $N(p)\geq 2=2N(p,p)$ for all $p$, we can suppose $\ell>1$

For an odd prime $p$, we find
$N(a,p,p^\ell)=p^{\ell-1}$ if $\gcd(a,p)=1$.
Otherwise $N(a,p,p^\ell)\leq p^{\ell-2}$ since $x\equiv 0\bmod p$
implies $x^2\equiv 0\bmod p^2$.
Given the formula for $N(p^\ell)$ in Proposition~\ref{prop:stangl}, it is obvious that
$N(p^\ell)<pN(p^{\ell-1})$. Thus
$N(p^\ell)/p^{\ell-1}$ decreases with $\ell$. The limit as $\ell$ tends to infinity is $p^2/2(p+1)\geq 9/8$ for odd primes (minimal for $p=3$). 

For $p=2$, we have
$N(2,4)=1=N(4)/2$. Further for $\ell\geq 3$,
$N(0,2,2^\ell)=N(2^{\ell-2})$ since an even square mod $2^\ell$
is of the form $4y$ where $y$ is a square mod $2^{k-2}$.
A quick look at the formulas for $N(2^\ell)$ reveals that
$N(2^{\ell-2})\leq (2/3) N(2^\ell)$.
And $N(1,2,2^\ell)$ is the number of odd squares mod $2^k$, which is the number of squares congruent to 1 mod 8, which is $2^{\ell-3}$ since every number congruent to 1 mod 8 is a square mod $2^\ell$.
Again a quick look at the formulas shows that
$N(2^\ell)\geq 2^{\ell-1}/3\geq (4/3)2^{\ell-3}$ so we are done.
\end{proof}

\subsection{Number of $\xi\in G$ with $\xi-\xi^{-1}\in G$}

\begin{proposition}
\label{prop:intersInvers}
Let $G$ be a subgroup of cardinality $T$ of $\F_q^*$.
Put 
$$N=|\{\xi\in G: \xi-\xi^{-1}\in G\}|.$$
Then we have
    $$N\ge \frac{T^2}{q}-3q^{1/2}.$$
\end{proposition}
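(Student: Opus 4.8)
The plan is to detect the condition $\xi-\xi^{-1}\in G$ by multiplicative characters and then reduce the resulting character sums to the estimate already provided by Corollary~\ref{weilcor}. The multiplicative characters $\chi$ of $\F_q$ that are trivial on $G$ form a group of order $(q-1)/T$, and orthogonality gives, for every $\eta\in\F_q^*$,
$$\mathbf{1}_{\eta\in G}=\frac{T}{q-1}\sum_{\chi|_G=1}\chi(\eta),$$
the sum running over those $(q-1)/T$ characters. Since the only $\xi\in G$ with $\xi-\xi^{-1}=0$ are $\xi=\pm1$, and since $0\notin G$, these points contribute nothing to $N$; writing the remaining membership indicators through the identity above and interchanging the order of summation, I would obtain
$$N=\frac{T}{q-1}\sum_{\chi|_G=1}\;\sum_{\xi\in G}\chi(\xi-\xi^{-1}),$$
where I use $\chi(0)=0$ so that the points $\xi=\pm1$ may be harmlessly reincluded in the inner sums.

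Next I would separate the contribution of the trivial character as the main term. For $\chi$ trivial, $\sum_{\xi\in G}\chi(\xi-\xi^{-1})$ counts the $\xi\in G$ with $\xi^2\neq1$; as there are at most two solutions of $\xi^2=1$, this term is at least $\frac{T}{q-1}(T-2)$, which exceeds $\frac{T^2}{q}-2$ because $\frac{T^2}{q-1}\ge\frac{T^2}{q}$ and $\frac{2T}{q-1}\le2$. For each of the $(q-1)/T-1$ nontrivial characters $\chi\in H^{\perp}$ — which are nontrivial multiplicative characters of $\F_q$ — Corollary~\ref{weilcor} yields $\bigl|\sum_{\xi\in G}\chi(\xi-\xi^{-1})\bigr|\le 2q^{1/2}$, so the total error is bounded by $\frac{T}{q-1}\bigl(\tfrac{q-1}{T}-1\bigr)\cdot 2q^{1/2}\le 2q^{1/2}$. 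Combining the two estimates gives $N\ge\frac{T^2}{q}-2-2q^{1/2}\ge\frac{T^2}{q}-3q^{1/2}$, the last inequality holding once $q\ge4$, while for $q\in\{2,3\}$ the claimed bound is negative and hence trivial.

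The genuine content of the argument is concentrated in the nontrivial-character estimate: everything hinges on square-root cancellation in the sums $\sum_{\xi\in G}\chi(\xi-\xi^{-1})$, which is precisely the Weil-type bound packaged in Corollary~\ref{weilcor}. I therefore expect no real obstacle beyond invoking that corollary correctly; the only points demanding care are the convention $\chi(0)=0$ together with the treatment of the exceptional elements $\xi=\pm1$, and the exact accounting of constants needed to pass from the clean main term $T^2/(q-1)$ and the error $2q^{1/2}$ to the stated $\frac{T^2}{q}-3q^{1/2}$.
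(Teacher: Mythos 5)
Your proof is correct and follows essentially the same route as the paper: detect the condition $\xi-\xi^{-1}\in G$ via orthogonality of the multiplicative characters trivial on $G$, isolate the trivial character as the main term, and bound each nontrivial character sum by $2q^{1/2}$ using Corollary~\ref{weilcor}. The only (cosmetic) difference is the endgame bookkeeping: the paper disposes of $T=q-1$ by an exact count and assumes $T\le(q-1)/2$ when comparing $\frac{(T-2)T}{q-1}-2q^{1/2}$ to the stated bound, whereas you handle all $T$ uniformly and only need to discard $q\in\{2,3\}$ as trivial.
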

\begin{proof}
For $T=q-1$ we have 
$$N=\left\{\begin{array}{cc} q-3, & q \mbox{ odd},\\q-2,& q \mbox{ even}.\end{array}\right.$$
since $\xi-\xi^{-1}\not\in G$ if and only if $\xi^2=1$, and thus the result is trivial.  Hence, we may assume $T\le (q-1)/2$.

 Let $\chi$ be a multiplicative character of order $(q-1)/T$ of $\F_q^*$.
Then we have
$$\frac{T}{q-1}\sum_{j=0}^{(q-1)/T-1}\chi^j(\zeta)=\left\{\begin{array}{cc}
1, & \zeta \in G,\\
0, & \zeta \in \F_q^*\setminus G.
\end{array}\right.$$

     It follows that
    $$N=\sum_{\xi \in  G \setminus\{-1,1\}}
    \frac{T}{q-1}\sum_{j=0}^{(q-1)/T-1}\chi^j(\xi-\xi^{-1}).$$
(Note that $\xi=\pm 1$ is not counted by $N$.)
Separating the contribution of the trivial character $\chi^0$ we get
with
$$\epsilon=|G \cap \{-1,1\}|\in\{1,2\},$$
\begin{equation}
    \label{eq:twoinnersums}
\left|N-\frac{(T-\epsilon)T}{q-1}\right|\le \max_{j=1,\ldots,(q-1)/T-1}
\left|\sum_{\xi \in G} \chi^j(\xi-\xi^{-1})\right|.\end{equation}
(Here we used the convention $\chi(0)=0$.)
For $j=1,\ldots,(q-1)/T-1$ the absolute value of the sum over $\xi$ 
is at most $2q^{1/2}$ by Corollary~\ref{weilcor}.

Combining this bound with \eqref{eq:twoinnersums}, we now get
$$N\ge \frac{(T-2)T}{q-1}- 2 q^{1/2}$$
and the result follows since we assumed $T\le (q-1)/2$. 
\end{proof}
%

\subsection{Sums of elements of a subgroup}

We use the Minkowski sum notation: given subsets $A_1,\ldots,A_m$ of the group $(\F_q,+)$,
we define their sum set by
$$
\sum_{i=1}^mA_i=\left\{\sum_{i=1}^ma_i: (a_1,\ldots,a_m)\in\prod_{i=1}^mA_i\right\}.
$$
We simplify the notation if all $A_i$'s are equal:
$mA=\sum_{i=1}^mA$.
Similarly we define the product set $AB$
of two sets $A$ and $B$ of $\F_q$ by
$AB=\{ab:(a,b)\in A\times B\}$. Again we may iterate this construction
and get $A^m=\{a_1\cdots a_m:a_i\in A\, \forall i\}$; in theory, this notation may provoke a clash of notation with the cartesian product, but we will ensure no ambiguity arises.

We quote \cite[Theorem 5]{glru09}. A subset $A\subseteq\F_q$ is {\em symmetric} if $a\in A$ implies $-a\in A$ and {\em antisymmetric} if $a\in A$ implies $-a\not\in A$. Note that a subgroup $G$ of $\F_q^*$ is either symmetric (if $|G|$ is even and thus $-1\in G$) or antisymmetric (if $|G|$ is odd and thus $-1\not\in G$).
\begin{proposition}
    \label{prop:glru}
    Assume that $A \subset \F_q$ and 
    $B \subset \F_q$ are such that $B$ is symmetric or antisymmetric. If additionally $|A||B| > q$ then $8AB = \F_q$.
\end{proposition}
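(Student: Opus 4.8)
The plan is to treat the statement as a covering problem: writing $8AB$ for the eightfold sumset of the product set $AB$, I want to show that every $t\in\F_q$ admits a representation $t=a_1b_1+\cdots+a_8b_8$ with $a_i\in A$ and $b_i\in B$. Fixing $t$, I count
\[
r(t)=\#\{(a_i,b_i)_{i=1}^{8}\in (A\times B)^{8}:\ a_1b_1+\cdots+a_8b_8=t\},
\]
and aim to prove $r(t)>0$ for all $t$. With $\psi$ the canonical additive character of $\F_q$ and
\[
S(\alpha)=\sum_{a\in A}\sum_{b\in B}\psi(\alpha ab),
\]
orthogonality gives $r(t)=q^{-1}\sum_{\alpha\in\F_q}\psi(-\alpha t)S(\alpha)^8$. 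Isolating $\alpha=0$, where $S(0)=|A||B|$, yields the main term $(|A||B|)^8/q$, so that $r(t)\ge (|A||B|)^8/q-q^{-1}\sum_{\alpha\ne0}|S(\alpha)|^8$; it then suffices to beat the error term uniformly in $t$.

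The error is governed by two quantities: the maximal bilinear sum $M=\max_{\alpha\ne0}|S(\alpha)|$ and the additive energy of the product set, $W=q^{-1}\sum_{\alpha}|S(\alpha)|^4=\#\{a_1b_1+a_2b_2=a_3b_3+a_4b_4\}$, combined via $\sum_{\alpha\ne0}|S(\alpha)|^8\le M^4\sum_{\alpha}|S(\alpha)|^4=M^4qW$, so that it would suffice to establish $M^4W<(|A||B|)^8/q$. For $M$, a single Cauchy--Schwarz in the variable $a$ followed by Parseval for $B$ gives a clean bound: writing $\widehat B(c)=\sum_{b\in B}\psi(cb)$, we have $|S(\alpha)|^2\le |A|\sum_{a\in A}|\widehat B(\alpha a)|^2\le |A|\sum_{c\in\F_q}|\widehat B(c)|^2=q|A||B|$, hence $M^2\le q|A||B|$. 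Feeding in only the trivial energy bound $W\le(|A||B|)^4$ yields the threshold $|A||B|\gg q^{3/2}$, which is too weak; the sharp hypothesis $|A||B|>q$ can be reached only once $W$ is shown to be close to its minimal value $(|A||B|)^4/q$.

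The main obstacle is therefore the additive-energy estimate: one must prove that the product set $AB$ is additively unstructured, with $W$ equal to $(|A||B|)^4/q$ up to a controlled error. This is precisely the step that fails for a general $B$, because the multiplicative energy of $A$ re-enters and can be as large as $|A|^3$ (for instance when $A$ is a coset of a proper subfield). Here the hypothesis that $B$ is symmetric or antisymmetric is decisive: pairing $b\leftrightarrow\pm b$ lets one rewrite the sums of products defining $W$ as \emph{differences} of products, and the resulting quadratic form can be opened by Cauchy--Schwarz in a way that decouples the offending multiplicative energy, leaving a bound in terms of $|A|$, $|B|$ and $q$ alone. One also uses implicitly that the product structure defeats the subfield obstruction: if $A$ and $B$ both lay in a proper subfield, no sumset of products could exhaust $\F_q$, and the condition $|A||B|>q$ is exactly what rules this out. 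Finally, the constant $8=2^3$ is an artifact of the amplification: three successive Cauchy--Schwarz (equivalently, doubling) steps are what force the main term past the error at the sharp density $|A||B|>q$, rather than at the weaker $q^{3/2}$ coming from the second moment alone. Since the statement is quoted from \cite{glru09}, I would in practice follow their amplification argument for the energy bound, which is the technical heart of the result.
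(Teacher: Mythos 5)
The paper does not actually prove this proposition: it is quoted verbatim as Theorem~5 of \cite{glru09}, so the only meaningful benchmark is the original proof there, and your proposal is not a reconstruction of it --- nor can it be completed as stated. Your plan is to get $r(t)>0$ for every $t$ by making the main term $(|A||B|)^8/q$ dominate the Fourier error, which, as you say, requires the additive energy $W$ of the product set to be close to its minimum $(|A||B|)^4/q$. That energy bound is \emph{false} under the hypotheses of the proposition. Take $q=p^2$, $B=\F_p$ (symmetric) and $A=\F_p\cup\{c\}$ with $c\in\F_q\setminus\F_p$, so $|A||B|=p(p+1)>q$. Restricting all variables to $\F_p$ already gives $W\geq p^7(1+o(1))$, while $(|A||B|)^4/q\approx p^6$. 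Equivalently, for the $p-1$ nonzero $\alpha$ with $\mathrm{Tr}(\alpha)=0$ one has $|S(\alpha)|\geq p^2-p$, so the error term $q^{-1}\sum_{\alpha\neq 0}|S(\alpha)|^8\gg p^{15}$ swamps the main term $\approx p^{14}$; and indeed for $t\notin\F_p$ the true count is $r(t)\approx 8p^{13}$, positive but an order of magnitude below the main term (here in fact already $2AB=\F_q$, since $AB\supseteq \F_p\cup c\F_p$). So at the sharp threshold $|A||B|>q$ the conclusion holds for covering reasons while equidistribution of representations --- the strictly stronger statement your method would establish --- genuinely fails. Your remark that ``$|A||B|>q$ is exactly what rules out'' the subfield obstruction is where the argument breaks: the hypothesis excludes containment of $A$, $B$ in a line over a proper subfield, but not near-containment, and near-containment already destroys the moment method (for any number of summands, so no amount of extra Cauchy--Schwarz amplification rescues it).

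The proof in \cite{glru09} is of a different nature, and elementary: the hypothesis $|A||B|>q$ forces, for every $x\in\F_q$, a collision among the $|A||B|$ values $xa+b$, i.e.\ a representation $x=(b-b')/(a'-a)$ with $a\neq a'$ (Glibichuk's pigeonhole observation); this covering statement is then converted into sums of products using identities such as $(a-a')(b-b')=ab+a'b'-ab'-a'b$, and the symmetry or antisymmetry of $B$ is what allows the minus signs to be absorbed, since $-ab=a(-b)\in AB$ when $B=-B$. In particular the constant $8$ arises roughly as $2\times 4$ (two products of differences, each expanding into four products), not as $2^3$ from iterated Cauchy--Schwarz. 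If you want a self-contained write-up, your Fourier setup can serve as motivation for why the statement is nontrivial, but the proof itself has to be the combinatorial argument of Glibichuk--Rudnev (or a variant of it); deferring ``the technical heart'' to an energy estimate is deferring to a statement that is not true.
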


For sets $A$ of size not larger than $\sqrt{q}$, there may be no integers $m,N$ such that
$NA^m=\F_q$ because $A$ may be included in a proper subfield, or more generally
$A\subset \xi\F$ for some $\xi\in\F_q$ and a proper subfield $\F$ of $\F_q$. In such a case, $A$ is called {\em degenerate}. It turns out this degeneracy
is the only impediment to obtaining $NA^m=\F_q$ for some  integers $m,N$:
this follows from \cite[Theorem 6]{gl11}, which we reformulate below.
\begin{proposition}
\label{prop:gl11}
Let $\varepsilon\in (0,1/2)$.
    Suppose $A\subset\F_q$ is non-degenerate and
    $|A|>q^\varepsilon$.
    Let $m=\lceil \varepsilon^{-1}+1/2\rceil$ and $r= 160\cdot 6^{m-3}\left(1+\frac{\log m}{\log 2}\right)$.
    Then $rA^{2m-2}=\F_q$.
\end{proposition}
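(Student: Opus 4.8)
Since Proposition~\ref{prop:gl11} is advertised as a reformulation of \cite[Theorem 6]{gl11}, the plan is not to reprove a sum--product estimate from scratch but to \emph{derive} the stated form by specialising the parameters of that theorem to our hypotheses and then simplifying the explicit constants. Theorem~6 of \cite{gl11} guarantees, for a non-degenerate set whose size exceeds a fixed power of $q$, a representation of the whole field as a bounded number of additive copies of a bounded iterated product set, with both bounds depending only on the exponent and not on $q$. All the genuine arithmetic --- that non-degeneracy forces the iterated products of $A$ to grow until they occupy a positive proportion of $\F_q$, whereupon finitely many translates cover the field --- is already contained there; what remains for us is bookkeeping.

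First I would fix the dictionary between their parameters and ours. Substituting the exponent $\varepsilon$ for the size threshold $|A|>q^\varepsilon$, the quantity controlling how many product factors are required is $\lceil\varepsilon^{-1}+1/2\rceil$, which is our $m$; tracing the product length through the $m-1$ stages of the underlying iteration yields the exponent $2m-2$ of the conclusion. The hypothesis $\varepsilon\in(0,1/2)$ enters exactly here: it forces $m\ge 3$, so that the exponent $m-3$ in the constant is non-negative and the product length $2m-2\ge 4$ suffices to reach the saturation regime. I would then check that the notion of \emph{non-degenerate} used in \cite{gl11} (not contained in any set $\xi\F$ with $\F$ a proper subfield) coincides with the one in the statement, so that the hypotheses transfer verbatim.

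Next I would track the multiplicative constant. The iteration in \cite{gl11} multiplies the number of summands by a fixed factor at each of the stages, which produces the geometric term $6^{m-3}$; an absolute base constant accounts for the factor $160$; and the final passage from a set of positive density to all of $\F_q$ --- which may be read off from a covering step of the type of Proposition~\ref{prop:glru} --- contributes the slowly growing factor $1+\log m/\log 2$. Assembling these yields $r=160\cdot 6^{m-3}\bigl(1+\log m/\log 2\bigr)$ as an \emph{admissible} number of summands. Because enlarging the number of summands only enlarges the sumset, it is enough to exhibit one admissible value, so any slack in the constants of \cite{gl11} is harmless and one is free to round the bounds upward to the clean closed forms stated.

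The main obstacle is thus entirely this parameter matching rather than any new inequality: one must confirm that \cite{gl11} is applied under precisely its stated hypotheses, that the different grouping of sums and products there (for instance working with $A\cup(-A)$ or with $A-A$) introduces only symmetric or antisymmetric factors that Proposition~\ref{prop:glru} tolerates, and that the explicit constants quoted simplify to the displayed $m$ and $r$. No estimate beyond those already available in \cite{gl11} and Proposition~\ref{prop:glru} should be needed.
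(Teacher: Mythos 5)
Your proposal takes essentially the same route as the paper: both simply specialise \cite[Theorem 6]{gl11} to the given hypotheses and carry out the parameter bookkeeping, using the monotonicity of sumsets to absorb any slack in the constants. The paper's version of that bookkeeping is explicit where yours is deferred --- Glibichuk's theorem carries a free parameter $\delta\in(0,1)$ with hypothesis $|A|>q^{1/(m-\delta)}$, and the choice $\delta=1/2$, $m=\lceil\varepsilon^{-1}+1/2\rceil$ both yields $q^{1/(m-\delta)}\le q^\varepsilon<|A|$ and collapses the maximum in his constant to the branch $160\cdot 6^{m-3}\left(1+\frac{\log m}{\log 2}\right)=r$ --- but this is exactly the verification your outline calls for, so the two arguments coincide.
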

\begin{proof}
  By \cite[Theorem 6]{gl11}, if   
 $|A| >q^{1/(m-\delta)}$ for some $\delta\in (0,1)$ and integer $m\ge 3$, then we have $NA^{2m-2}=\F_q$ where
  $$N= 6^{m-3} \max\left\{30 \left(3 + \frac{\log \delta^{-1}}{\log 2}\right), 160\left(1 +  \frac{\log m}{\log 2}\right)\right\}.$$
  Choosing $\delta=\frac{1}{2}$ and $m=\lceil \varepsilon^{-1}+\delta\rceil$ we get 
  $q^{1/(m-\delta)}\le q^\varepsilon<|A|$, $\delta^{-1}\le m$ so $N\le r$ and the result follows. 
\end{proof}

Let $G$ be a subgroup of $\F_q^*$ of order $T$.
Let $r(G)$ be the smallest integer (if it exists) such that 
each element $\alpha\in \F_q^*$ can be written
as sum of $r(G)$ summands of $G$, that is,
$$\alpha=x_1+x_2+\cdots+x_{r(G)}\quad \mbox{for some }x_1,x_2,\ldots,x_{r(G)}\in G.$$
Note that for $T\ge 2$ and any $r>r(G)$ there is a solution $(x_1,x_2,\ldots,x_r)\in G^r$ of $x_1+x_2+\cdots+x_r=\alpha$ for any $\alpha\not= 0$. In the sum set notation, $r(G)$ is the smallest integer $r$ such
that $\F_q^*\subset rG$.

\begin{proposition}
\label{prop:sumset}
  For $s=2,3,\ldots,7$ we have
  $$r(G)\le s\quad \mbox{if }T> q^{1/2+1/2s}.$$
  Moreover, we have
  $$r(G)\le
  8  
  \mbox{ if }T> q^{1/2}.$$
  If $G$ is not a subset of a proper subfield of $\F_q$, that is, $T$ is not a divisor of $p^d-1$ for any divisor $d<n$ of $n$,
  then we have for any $\varepsilon$ with $0<\varepsilon<\frac{1}{2}$,
  $$r(G)\le 160\cdot 6^{m-3}\left(1+\frac{\log m}{\log 2}\right)\quad\mbox{if }T> q^\varepsilon,$$
  where $m=\lceil \varepsilon^{-1}+\frac{1}{2}\rceil,$
  that is, $r(G)$ is bounded by a constant depending only on 
  $\varepsilon$.
\end{proposition}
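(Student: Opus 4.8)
The plan is to derive all three bounds from the single structural fact that $G$ is multiplicatively closed, so that every product set $G^k=\{g_1\cdots g_k:g_i\in G\}$ equals $G$, and consequently every sum set $kG$ satisfies $(kG)\cdot G=kG$; in other words $kG$ is a union of cosets of $G$ in $\F_q^*$ (together with possibly $0$). This lets me feed $A=B=G$ into the quoted sum-product theorems and collapse their product sets back to $G$, and it supplies the coset-counting rigidity that sharpens the second-moment argument for small $s$.

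First I dispose of the two cases that are immediate consequences of the quoted results. For the bound $r(G)\le 8$ when $T>q^{1/2}$, I apply Proposition~\ref{prop:glru} with $A=B=G$: a subgroup is symmetric or antisymmetric, so $B=G$ is admissible, and $|A||B|=T^2>q$ yields $8(GG)=\F_q$; since $GG=G$ this reads $8G=\F_q\supseteq\F_q^*$, i.e. $r(G)\le 8$. For the last bound I apply Proposition~\ref{prop:gl11} with $A=G$. I first check that the stated hypothesis is exactly non-degeneracy: because $1\in G$, an inclusion $G\subseteq\xi\F$ with $\F$ a proper subfield forces $\xi\in\F$ and hence $G\subseteq\F$; as the subfields of $\F_q$ are the $\F_{p^d}$ with $d\mid n$ and $\F_{p^d}^*$ is cyclic of order $p^d-1$, the condition ``$T\nmid p^d-1$ for every proper divisor $d$ of $n$'' says precisely that $G$ lies in no proper subfield, i.e. $G$ is non-degenerate. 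With $|G|=T>q^\varepsilon$ and $\varepsilon<1/2$ (so $m=\lceil\varepsilon^{-1}+\tfrac12\rceil\ge 3$), Proposition~\ref{prop:gl11} gives $rG^{2m-2}=\F_q$, and $G^{2m-2}=G$ turns this into $rG=\F_q\supseteq\F_q^*$, whence $r(G)\le r$.

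For the ranges $2\le s\le 7$ I use a second-moment count. Let $R_s(\alpha)$ be the number of $(x_1,\dots,x_s)\in G^s$ with $x_1+\cdots+x_s=\alpha$, and write $S(\psi)=\sum_{x\in G}\psi(x)$ for an additive character $\psi$. Using the standard subgroup bound $|S(\psi)|\le q^{1/2}$ for nontrivial $\psi$ (the additive-character form of the Weil estimate, analogous to Proposition~\ref{weil}) together with $\sum_\psi|S(\psi)|^2=qT$, Fourier inversion and Parseval give
$$\sum_{\alpha\in\F_q}\left(R_s(\alpha)-\frac{T^s}{q}\right)^2=\frac{1}{q}\sum_{\psi\neq\psi_0}|S(\psi)|^{2s}\le q^{s-1}T.$$
Hence the number of $\alpha$ with $R_s(\alpha)=0$ is at most $q^{s+1}/T^{2s-1}$. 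The decisive step is then to observe that the set of non-representable $\alpha\in\F_q^*$ is $G$-invariant, hence a union of cosets of $G$, so its cardinality is a multiple of $T$; therefore as soon as $q^{s+1}/T^{2s-1}<T$, that is $T>q^{(s+1)/(2s)}=q^{1/2+1/(2s)}$, this set must be empty, giving $\F_q^*\subseteq sG$ and $r(G)\le s$. For $s\ge 8$ the threshold $q^{1/2+1/(2s)}$ exceeds $q^{1/2}$, so nothing is gained over the bound already obtained, which is why only $s\le 7$ is recorded.

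The main obstacle is precisely the coset observation in the last paragraph: a bare second moment only yields emptiness of the bad set once $q^{s+1}/T^{2s-1}<1$, i.e. for $T>q^{(s+1)/(2s-1)}$, which is too weak; upgrading ``few non-representable elements'' to ``none'' via the $G$-invariance of $sG$ is what converts the crude exponent into the sharp $q^{(s+1)/(2s)}$, and it must be married correctly to the multiplicative closure of $G$. A secondary technical point is supplying the subgroup character-sum bound $|S(\psi)|\le q^{1/2}$, which relies on the additive-character Weil estimate rather than the multiplicative version stated as Proposition~\ref{weil}.
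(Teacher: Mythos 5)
Your proposal is correct, and for the first assertion it takes a genuinely different route from the paper. The paper's proof of the bound $r(G)\le s$ for $T>q^{1/2+1/(2s)}$ is a one-line citation of Hart--Iosevich \cite[Theorem 1.1]{haio08} (applied implicitly with $A=G$, so that sums of products collapse to sums via $GG=G$), whereas you give a self-contained second-moment argument: the Gauss-sum bound $|S(\psi)|\le q^{1/2}$ for nontrivial additive $\psi$, Parseval, and then --- crucially --- the observation that $\F_q^*\setminus sG$ is $G$-invariant, hence has cardinality a multiple of $T$, which upgrades ``at most $q^{s+1}/T^{2s-1}$ missing elements'' to ``none'' exactly when $T>q^{(s+1)/(2s)}$. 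This coset trick is what recovers the sharp exponent: a naive pointwise estimate $\bigl|R_s(\alpha)-T^s/q\bigr|\le q^{(s-2)/2}T$ would only give $r(G)\le s$ under the stronger hypothesis $T>q^{1/2+1/(2(s-1))}$, so your use of the subgroup structure is doing real work and is not merely cosmetic. The trade-off is that your argument is specific to multiplicative subgroups (Hart--Iosevich's theorem covers sums of products of arbitrary sets), and it relies on the additive-character Weil bound, which the paper never states (Proposition \ref{weil} is the multiplicative version) --- though that bound is standard and your derivation of the consequences ($\sum_\psi|S(\psi)|^2=qT$ and the $L^2$ inequality) is correct. Your treatment of the other two assertions coincides with the paper's: Proposition \ref{prop:glru} with $A=B=G$ and $GG=G$ gives $r(G)\le 8$ for $T>q^{1/2}$, and Proposition \ref{prop:gl11} with $A=G$ gives the last bound; your verification that, because $1\in G$, degeneracy $G\subseteq\xi\F$ forces $G\subseteq\F$ and hence $T\mid p^d-1$, is a point the paper leaves implicit and is a welcome addition.
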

\begin{proof}
  For the first result see \cite[Theorem 1.1]{haio08}.
  The second result follows from 
  Proposition~\ref{prop:glru} since $G$ is either symmetric or antisymmetric 
  and the third result follows from Proposition~\ref{prop:gl11}.
\end{proof}
\section{The Diffie-Hellman mapping}
\label{DH}
In this section, we fix a prime $p$, an integer $n\geq 1$, and an element $\gamma\in\F_q^*$. Let $G=\gen{\gamma}$ and $T=|G|$. We fix $m\leq T$ and a self-mapping $F$ of $\F_q$
that coincides with the Diffie-Hellman mapping $d=d_\gamma$ on all but
at most $m$ elements of $G$.

Our goal is to prove that the least additive index of $F$ is large.
Since we have information on $F$ on only at most $T$ points, we do not expect
this index to be much more than $T$ in general. Indeed, if $k>\lceil \log_pT\rceil$,
it becomes conceivable (and very probable as $k$ grows) that there
exists a subspace $U\leq \F_q$ of codimension $k$ each of whose translates contains at most one point of $G$; then $F$ could be constant on each of
these translates.

To achieve this goal, we fix a subspace $U\leq \F_q$ and denote
by $k\leq n$ its codimension. Let $\cV=\F_q/U$ be the collection of all
cosets (aka translates) of $U$. Assume there exist $M\in\F_q[X]$ linearised
and $(a_V)_{V\in\cV}\in\F_q^\cV$ such that
\begin{equation}
\label{eq:defF}
F(\xi)=M(\xi)+a_{U+\xi}
\end{equation}
for all $\xi\in\F_q$.
We need to show that $k$ is large.
To do this, we will use frequently the following lemma.
\begin{proposition}
\label{prop:key}
    Let $t=\gcd(T,p-1)$ and $g=\gamma^{T/t}$.
    For $j\in\Z$ let
    $S_{j}=\{\xi\in \gen{\gamma}:F(g^\epsilon\xi)=d_\gamma(g^\epsilon\xi)\text{ for any }\epsilon\in\{0,j\}\}.$
    Then for any $j\in\Z$ and  $\xi=\gamma^x\in S_j$ we have
    \begin{equation}
\label{eq:key}
d(\xi)(g^{j(2x+jT/t)}-g^j)=a_{U+g^j\xi}-g^ja_{U+\xi}.
\end{equation}
In particular, if $t$ is even,
we have
\begin{equation}
\label{eq:key3}
d(\xi)((-1)^{T/2}+1)=a_{U-\xi}+a_{U+\xi} \text { for all }\xi\in S_{t/2}.
\end{equation}
\end{proposition}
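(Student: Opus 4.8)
The plan is to read off two evaluations of \eqref{eq:defF}. By definition $S_j$ is exactly the set of $\xi\in\gen{\gamma}$ on which $F$ coincides with $d$ both at $\xi$ (the case $\epsilon=0$) and at $g^j\xi$ (the case $\epsilon=j$). So for $\xi\in S_j$ I may substitute $F(\xi)=d(\xi)$ and $F(g^j\xi)=d(g^j\xi)$ into \eqref{eq:defF}, eliminate the common linear part $M(\xi)$, and collect the $a_V$-terms on one side. For this to produce a clean identity, the scalar $g$ must pass \emph{through} the linearised map $M$; the conceptual crux, and what I regard as the one genuinely essential observation, is that $g$ (hence every power $g^j$) lies in the prime field $\F_p$, so that $\F_p$-linearity of $M$ gives $M(g^j\xi)=g^jM(\xi)$.

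\textbf{Key preliminary facts.} First I would record that $g\in\F_p^*$: the element $g=\gamma^{T/t}$ has multiplicative order $t=\gcd(T,p-1)$, which divides $p-1$, and $\F_p^*$ is the unique subgroup of $\F_q^*$ of order $p-1$, so every element whose order divides $p-1$ lies in $\F_p$. Hence $g^j\in\F_p$ for all $j$ and $M(g^j\xi)=g^jM(\xi)$. Second, I would compute the multiplicative ``twist'' relating $d(g^j\xi)$ to $d(\xi)$. Writing $\xi=\gamma^x$, we have $g^j\xi=\gamma^{x+jT/t}$ (here $T/t\in\Z$ since $t\mid T$), so from $d(\gamma^y)=\gamma^{y^2}$ and expanding the square,
$$
d(g^j\xi)=\gamma^{(x+jT/t)^2}=\gamma^{x^2}\gamma^{(2xj+j^2T/t)T/t}=d(\xi)\,g^{\,j(2x+jT/t)}.
$$

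\textbf{Deriving \eqref{eq:key}.} For $\xi\in S_j$, the two instances of \eqref{eq:defF} read $d(\xi)=M(\xi)+a_{U+\xi}$ and $d(g^j\xi)=g^jM(\xi)+a_{U+g^j\xi}$. Eliminating $M(\xi)=d(\xi)-a_{U+\xi}$ from the second and inserting the twist formula gives $d(\xi)\,g^{j(2x+jT/t)}=g^j d(\xi)-g^ja_{U+\xi}+a_{U+g^j\xi}$, which rearranges to exactly \eqref{eq:key}.

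\textbf{The special case $t$ even.} Here I would set $j=t/2$. Since $(g^{t/2})^2=g^t=1$ and $g$ has order exactly $t$, we get $g^{t/2}=-1$; thus $g^j\xi=-\xi$, so $U+g^j\xi=U-\xi$ and the right-hand side of \eqref{eq:key} becomes $a_{U-\xi}+a_{U+\xi}$. For the scalar on the left I would simplify the exponent as $j(2x+jT/t)=tx+(t/2)(T/2)$ (note $2\mid t\mid T$, so $T/2\in\Z$), whence $g^{\,j(2x+jT/t)}=(g^t)^x(g^{t/2})^{T/2}=(-1)^{T/2}$, and the factor $g^{j(2x+jT/t)}-g^j$ equals $(-1)^{T/2}+1$. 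This yields \eqref{eq:key3}. The only delicate point throughout is the exponent bookkeeping modulo $T$ (for $d$) and modulo $t=\mathrm{ord}(g)$ (for the scalar); beyond the observation $g\in\F_p$, the argument is routine.
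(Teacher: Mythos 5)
Your proof is correct and takes essentially the same route as the paper's: both hinge on the observation that $g\in\F_p$ (so $\F_p$-linearity gives $M(g^j\xi)=g^jM(\xi)$ and $U+g^j\xi=g^j(U+\xi)$), then equate the two evaluations of $F(g^j\xi)$ — once via \eqref{eq:defF} and once via the twist $d(g^j\xi)=d(\xi)g^{j(2x+jT/t)}$ — and finally specialise to $j=t/2$, $g^{t/2}=-1$ to obtain \eqref{eq:key3}. Your exponent bookkeeping in the even case is in fact slightly more explicit than the paper's one-line conclusion.
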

Note that $g\in\F_p$, so $U+g^j\xi=g^j(U+\xi)$.
Therefore, an important feature of Equation~\eqref{eq:key} is that the right-hand side depends only on $j\bmod t$ and $U+\xi$.
\begin{proof}
    Fix $V\in\cV$, $j\in\Z$ and $\xi=\gamma^x\in U_V\cap S_j$. By Equation
    \eqref{eq:defF}, the fact that $g^j\in\F_p$ and linearity of $M$,
    we have on the one hand
 \begin{align*}
 F(g^j\xi)=g^jM(\xi)+a_{g^jV}=g^jd(\xi)+a_{g^jV}-g^ja_V
 \end{align*}
 and on the other hand
  \begin{align*}
 F(g^j\xi)&=d(g^j\xi)=\gamma^{(x+jT/t)^2}\\&=\gamma^{x^2}g^{2jx+j^2T/t}=d(\xi)g^{2jx+j^2T/t}.
 \end{align*}
Therefore 
$g^jd(\xi)+a_{g^jV}-g^ja_V=d(\xi)g^{2jx+j^2T/t}$. 
This supplies the first desired conclusion.
Now if $t$ (and hence $T$) is even
we take $j=t/2$, so
$g^j=-1$ in Equation~\eqref{eq:key} to derive~\eqref{eq:key3}.
\end{proof}
Various hypotheses on $T$ and $t$ combined with Proposition \ref{prop:key} yield remarkable results, which we state and prove separately for more legibility. Recall that $N(T)$ is the number of squares modulo $T$, equivalently the number of distinct values of $d=d_\gamma$.
\begin{theorem}
    \label{th:dh04}
    If $T\equiv 0\bmod 4$, we have $p^k\geq N(T)-2m$.
\end{theorem}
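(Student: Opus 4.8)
The plan is to apply Proposition~\ref{prop:key} with the single well-chosen shift $j=t/2$ and to read off from the resulting identity an upper bound on the number of distinct values that $d$ can take. First I would note that the hypothesis $T\equiv 0\bmod 4$ silently forces $p$ to be odd: if $p=2$ then $T$ divides $q-1=2^n-1$, which is odd, so no element of order divisible by $4$ exists and the statement is vacuous. Hence $p-1$ is even, and since $T$ is even, $t=\gcd(T,p-1)$ is even as well, so $j=t/2$ is a legitimate integer and Equation~\eqref{eq:key3} applies. Because $4\mid T$ the sign $(-1)^{T/2}$ equals $1$, so \eqref{eq:key3} reads
$$2\,d(\xi)=a_{U+\xi}+a_{U-\xi}\qquad\text{for all }\xi\in S_{t/2}.$$

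The crux of the argument is that the right-hand side depends only on the single coset $U+\xi\in\cV$: since $U$ is a subspace, $U-\xi=-(U+\xi)$, so once $U+\xi$ is known both summands are determined. Consequently, as $\xi$ ranges over $S_{t/2}$, the quantity $a_{U+\xi}+a_{U-\xi}$, and therefore $2\,d(\xi)$, takes at most $|\cV|=p^k$ distinct values. As $p$ is odd, multiplication by $2$ is a bijection of $\F_q$, so $d$ itself assumes at most $p^k$ distinct values on $S_{t/2}$.

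It then remains to bound from below the number of values of $d$ on $S_{t/2}$. Writing $B=\{\xi\in G:F(\xi)\neq d(\xi)\}$ we have $|B|\le m$; since $g^{t/2}=-1$ and $-1\in G$ (because $T$ is even), the complement $G\setminus S_{t/2}$ is contained in $B\cup(-B)$ and so has at most $2m$ elements. Recalling that $d$ takes exactly $N(T)$ distinct values on all of $\gen{\gamma}$, deleting at most $2m$ points from the domain can suppress at most $2m$ of these values; hence $d$ still attains at least $N(T)-2m$ distinct values on $S_{t/2}$. Comparing with the upper bound of the previous paragraph yields $p^k\ge N(T)-2m$, as desired.

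The proof should be short, and the only genuinely delicate point is the crux in the second paragraph: the payoff of choosing $j=t/2$ is precisely that the two cosets $U+\xi$ and $U-\xi$ on the right of \eqref{eq:key3} are exchanged by the involution $V\mapsto -V$ on $\cV$, which collapses an a priori pair-indexed quantity into a coset-indexed one and thereby couples the value count of $d$ directly to $p^k$. Everything else, namely the parity reductions and the removal of at most $2m$ exceptional points, is routine bookkeeping.
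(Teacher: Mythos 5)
Your proof is correct and follows essentially the same route as the paper's: apply Proposition~\ref{prop:key} with $j=t/2$, observe that $(-1)^{T/2}=1$ makes the right-hand side of \eqref{eq:key3} depend only on the coset $U+\xi$, so $d$ takes at most $p^k$ values on $S_{t/2}$, and then account for the at most $2m$ exceptional points. The only cosmetic difference is that you rule out $p=2$ by vacuity of the hypothesis, whereas the paper notes directly that $4\mid T\mid p^n-1$ forces $p$ odd; these are the same observation.
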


\begin{proof}
    By hypothesis
$p^n\equiv 1\bmod 4$ so $p\equiv 1\bmod 2$, whence $t$ is even.
Further $(-1)^{T/2}=1$ and
thus Equation~\eqref{eq:key3} from Proposition \ref{prop:key} implies that
$d(\xi)=(a_{-V}+a_V)/2$ is constant on $V\cap S_{t/2}$,
for every $V\in\cV$.
As a result, the number of distinct values of $F$ on $S_{t/2}$
is at most $|\cV|$, which is $p^k$.
Moreover $|G\setminus S_{t/2}|\leq 2m$.
It follows that the number $N(T)$ of distinct values of $d_\gamma$ is at most $p^k+2m$, and we are done.
\end{proof}

\begin{theorem}
\label{th:t2}
If $f>2$ is a common divisor of $T$ and $p-1$, we have $p^k\geq \frac{N(T)}{9f}-\frac{2m}{f}$.
\end{theorem}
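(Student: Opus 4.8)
The plan is to specialise Proposition~\ref{prop:key} to the value $j=t/f$. Since $f>2$ divides both $T$ and $p-1$, it divides $t=\gcd(T,p-1)$, so $t/f$ is an integer and $\omega:=g^{t/f}=\gamma^{T/f}\in\F_p^*$ is a primitive $f$-th root of unity. For $\xi=\gamma^x\in S_{t/f}$, writing $V=U+\xi$ and using $\omega\in\F_p$ so that $U+\omega\xi=\omega V$, Equation~\eqref{eq:key} becomes
\begin{equation*}
d(\xi)\bigl(\omega^{2x+T/f}-\omega\bigr)=a_{\omega V}-\omega a_V .
\end{equation*}
The crucial point is that the factor $\omega^{2x+T/f}-\omega$ depends only on the residue $x\bmod f$, which is itself a well-defined function of $\xi\in\gen{\gamma}$ because $f\mid T$.

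Next I would partition $\gen{\gamma}$ according to $x\bmod f$. Call a residue class $a\bmod f$ \emph{good} if $\omega^{2a+T/f}\neq\omega$, that is if $2a+T/f\not\equiv 1\bmod f$. On the intersection of a good class with $S_{t/f}$ the displayed relation solves for $d(\xi)$ as a quotient whose value is pinned down by $V$ once the class is fixed; hence $d$ takes at most $|\cV|=p^k$ values on each good class, and at most $fp^k$ values on their union. It remains to control the \emph{bad} classes, where $2a+T/f\equiv 1\bmod f$. If $f$ is odd this congruence has a single solution, so there is one bad class; if $f$ is even it has either none or exactly two solutions, and in the latter case the two bad classes differ by $f/2$ and therefore merge into a single residue class modulo $f/2$.

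Finally I would assemble the count. The values of $d$ on $\gen{\gamma}$ come from three sources: the good classes (at most $fp^k$ values); the points outside $S_{t/f}$ (at most $2m$ values, since $F$ differs from $d$ at most $m$ times and membership in $S_{t/f}$ concerns the two points $\xi$ and $\omega\xi$, both in $\gen{\gamma}$); and the bad classes. For the bad classes the number of distinct values $d(\gamma^x)=\gamma^{x^2}$ with $x$ bad is exactly the number of squares mod $T$ lying in a single residue class modulo $s$, with $s=f$ when $f$ is odd and $s=f/2$ when $f$ is even. In both cases $s\mid T$ and $s>1$, so Proposition~\ref{prop:squaresmodT} bounds this quantity by $N(s,T)\le\frac{8}{9}N(T)$. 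Recalling that $N(T)$ is the total number of distinct values of $d$, we obtain
\begin{equation*}
N(T)\le fp^k+2m+\tfrac{8}{9}N(T),
\end{equation*}
and rearranging yields $\tfrac19 N(T)\le fp^k+2m$, i.e.\ $p^k\ge \frac{N(T)}{9f}-\frac{2m}{f}$.

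I expect the main obstacle to be the treatment of the degenerate (``bad'') residue classes, and in particular the even-$f$ case: a single use of the relation leaves two classes uncontrolled, and the factor $8/9$ would be fatal if applied to each of them separately. The clean resolution is the observation that these two classes fuse into one class modulo $f/2>1$, so that Proposition~\ref{prop:squaresmodT} can be invoked just once; the odd-$f$ case needs it only for the one exceptional class modulo $f$. The remaining bookkeeping — the $S_{t/f}$-error term and the fact that the coefficient never forces a spurious division — is routine.
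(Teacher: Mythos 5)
Your proof is correct and follows essentially the same route as the paper's: specialising Proposition~\ref{prop:key} to $j=t/f$, solving for $d(\xi)$ on the classes where $2x+T/f\not\equiv 1\bmod f$, and controlling the exceptional class (a single class modulo $s=f$ or $s=f/2$) via Proposition~\ref{prop:squaresmodT}. The only cosmetic difference is that you count at most one value of $d$ per pair (good class, coset), giving $fp^k$ where the paper notes the factor $h^{2x+T/f}-h$ takes at most $s$ values per coset and gets $sp^k$; both yield the stated bound after dividing by $f$.
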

\begin{proof}
    Let $j=t/f$ and $h=g^j$.
Let $s=f/2$ if $f$ is even and $s=f$ if $f$ is odd.
Then Equation~\eqref{eq:key} rewrites
\begin{equation}
\label{eq:key2}
d(\xi)(h^{2x+T/f}-h)=a_{hV}-ha_V \text { for all }\xi\in V\cap S_{t/f}.
\end{equation}
Whenever $\xi=\gamma^x\in V\cap S_j$ and $2x+T/f\not\equiv 1\bmod f$, 
Equation~\eqref{eq:key2} implies that
$$d(\xi)=(h^{2x+T/f}-h)^{-1}(a_{hV}-ha_V)$$ 
thus $d(\xi)$ takes only at most $s$ distinct values on $V\cap S_{t/f}$ for each $V\in\cV$.
In total, the number of distinct values of $d(\gamma^x)$
on the set of all $\gamma^x\in S_j$ where  $2x+T/f\not\equiv 1\bmod f$
is therefore at most $sp^k$.

Consider the number $M$ of squares $x^2\bmod T$ such that
$$    2x+T/f\equiv 1\bmod f.
$$
We just proved $2m+sp^k+M\geq N(T)$.
Since $M\leq N(s,T)\leq 8N(T)/9$ by Proposition \ref{prop:squaresmodT},
we conclude.
\end{proof}
We are now able to prove Theorem \ref{thm:a}, which we first restate for convenience
\begin{T1}
  \thmtext
\end{T1} 
\begin{proof}
    In view of the hypotheses, either $T=q-1\equiv 0\bmod 4$ or
    $\gcd (T,p-1)>2$, possibly both. 
    Therefore we may apply at least one of Theorem \ref{th:dh04} or
    Theorem \ref{th:t2} (with $m=0$), which, combined with Corollary \ref{cor:nteps},
    gives the desired conclusion.
\end{proof}

We have yet another result for $T=q-1$, which covers certain cases that Theorem \ref{thm:a} does not (but conversely not all cases that Theorem \ref{thm:a} does), and yields even stronger results when it applies,
namely the maximal result $p^k=q$ when $m=0$.
\begin{theorem}
    If $T=q-1$, $p>2$, and $n\not\equiv 0\bmod \frac{p-1}{2}$,
 then $p^k\geq \frac{q}{4m+1}$.
\end{theorem}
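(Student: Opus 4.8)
The plan is to prove the equivalent inequality $|U|\le 4m+1$, exploiting that codimension $k$ means $\dim U=n-k$, so $|U|=p^{n-k}$ and $p^k=q/|U|$. Since $T=q-1$ we have $G=\gen{\gamma}=\F_q^*$, $t=\gcd(T,p-1)=p-1$, and $g=\gamma^{(q-1)/(p-1)}$ is an element of order $p-1$, hence a \emph{generator} of $\F_p^*$; in particular $g$ is a \emph{non-square} in $\F_p$, and this single fact will drive the whole argument. Writing $U^*=U\setminus\{0\}=U\cap G$ and letting $E\subseteq G$ denote the set of at most $m$ points where $F\ne d$, I would establish the covering statement: \emph{every $\xi\in U^*$ has at least one of $g^{-1}\xi,\xi,g\xi,g^2\xi$ in $E$}. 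Granting this, translating the four membership conditions into conditions on $\xi$ gives $U^*\subseteq E\cup gE\cup g^{-1}E\cup g^{-2}E$, whence $|U^*|\le 4|E|\le 4m$ and $|U|\le 4m+1$.

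To prove the covering statement I argue by contradiction: suppose some $\xi=\gamma^x\in U^*$ has $g^{-1}\xi,\xi,g\xi,g^2\xi$ all non-exceptional. Because $\xi\in U$ and each $g^i\in\F_p$, the whole short orbit stays inside the single coset $U$, so in Proposition~\ref{prop:key} we have $V=U$ and the right-hand side of \eqref{eq:key} collapses to $a_U(1-g^j)$. Applying it at $j=1,-1,2$ (and using $(q-1)/(p-1)\equiv n\bmod(p-1)$, so that $g^{j^2(q-1)/(p-1)}=g^{j^2n}$) yields the three relations
$d(\xi)(g^{n+2x}-g)=a_U(1-g)$, $d(\xi)(g^{n-2x}-g^{-1})=a_U(1-g^{-1})$, and $d(\xi)(g^{4n+4x}-g^2)=a_U(1-g^2)$.

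The heart of the proof is the elimination of $d(\xi)$. If $a_U=0$, the first and third relations force $g^{n+2x}=g$ and $g^{4n+4x}=g^2$, i.e. $n+2x\equiv1$ and $4n+4x\equiv2\pmod{p-1}$, which combine to $2n\equiv0\pmod{p-1}$, contradicting $n\not\equiv0\pmod{(p-1)/2}$. If $a_U\ne0$, then the denominators below are nonzero precisely because $a_U\ne0$, and writing $v=g^{n+2x}\in\F_p^*$, dividing the third relation by the first and the first by the second produces the two quadratics $Q(v)=g^{2n}v^2-(1+g)v+g=0$ and $P(v)=v^2-(1+g)v+g^{2n+1}=0$. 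Their difference is $P(v)-Q(v)=(1-g^{2n})(v^2-g)$; since $n\not\equiv0\pmod{(p-1)/2}$ gives $g^{2n}\ne1$, any common root would satisfy $v^2=g$. But $v\in\F_p$ and $g$ is a non-square in $\F_p$, a contradiction. This settles the covering statement, and the theorem follows.

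I expect the main obstacle to be locating the structural fact that makes everything work, rather than the calculations themselves: the two quadratics arising from the two overlapping windows $\{-1,0,1\}$ and $\{0,1,2\}$ can share a root only at $v^2=g$, which is forbidden exactly because $g$ generates $\F_p^*$ (this is where $T=q-1$ is used). A window of length four is precisely what makes both quadratics simultaneously available, and that is the origin of the constant $4$ in $4m+1$. The one point to check carefully is that every division is legitimate: each vanishing denominator would itself force $a_U=0$, so the $a_U=0$ and $a_U\ne0$ branches are genuinely exhaustive and the eliminations are valid.
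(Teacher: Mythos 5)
Your proof is correct, and it rests on the same foundation as the paper's---Proposition~\ref{prop:key} specialised to the neutral coset $U$, where the right-hand side of \eqref{eq:key} collapses to $a_U(1-g^j)$, combined with the identity $T/t\equiv n \bmod (p-1)$ and a final count of exceptional points---but the execution is genuinely different. The paper takes the window $j\in\{1,-1,t/2\}$ and uses $g^{t/2}=-1$ to split on the parity of $T/2$: if $T/2$ is odd then $a_U=0$, if $T/2$ is even then $d(\xi)=a_U\neq 0$, and in either branch only linear congruences modulo $t$ remain. You take $j\in\{1,-1,2\}$ and split directly on $a_U=0$ versus $a_U\neq 0$; in the latter case you cannot assert $d(\xi)=a_U$, so you eliminate both unknowns $d(\xi)$ and $a_U$ by division, and the contradiction comes from the factorisation $P(v)-Q(v)=(1-g^{2n})(v^2-g)$ together with the fact that $g$, being a generator of the even-order group $\F_p^*$ (this is where $T=q-1$ enters), is a quadratic non-residue of $\F_p$---an ingredient the paper never needs, since its hard case reduces to linear congruences after cancelling $d(\xi)=a_U$. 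What each approach buys: your case split is uniform in $q$ (no dependence on the parity of $(q-1)/2$) and uses only bounded exponents $j$, while the paper's algebra is lighter and its $T/2$-odd branch needs only a three-point window, giving $q/(3m+1)$ in that case; both need a four-point window in the worst case, hence the common bound $q/(4m+1)$. Your closing checks are exactly the right ones: every vanishing denominator forces $a_U=0$, so the two branches are exhaustive, and since the hypothesis $n\not\equiv 0\bmod\frac{p-1}{2}$ is vacuous for $p=3$, one may assume $p\geq 5$, so $g$, $g^{-1}$, $g^2$ are all distinct from $1$ and every manipulation is legitimate.
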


\begin{proof}
Since $T=q-1$, we have $t=\gcd(T,p-1)=p-1$.
Since $p$ is odd, $t=p-1$ and $T$ are both even.
We focus on the neutral coset
$U$ and denote by $a=a_U$ the corresponding constant.
In that case,
for any
$\xi\in U\cap S_{j}$, Equation~\eqref{eq:key} becomes
\begin{equation}
\label{eq:key4}
d(\xi)(g^{j(2x+jT/t)}-g^j)=(1-g^j)a \text { for all }\xi\in U\cap S_{j},
\end{equation}

Taking $j=t/2$ and $\xi\in U\cap S_{t/2}$ (if it exists) in \eqref{eq:key4} yields
$$\left(1+(-1)^{T/2}\right)d(\xi)=2a,$$
that is,
$$0\ne d(\xi)=a \quad \mbox{if }\frac{T}{2}\mbox{ is even},$$
and
$$a=0 \quad\mbox{if } \frac{T}{2} \mbox{ is odd}.$$

Assume that $U\cap S_{t/2}\neq\emptyset$.
If $T/2=(q-1)/2$ is odd (thus $a=0$), 
Now we consider $j=\pm 1$.
This time Equation~\eqref{eq:key} yields
\begin{equation}
\label{eq:keycong}
2x+jT/t\equiv 1\bmod t \text{ for any }j\in\{-1,1\} \text{ and } \gamma^x\in U\cap S_1\cap  S_{-1}.
\end{equation}
Taking the difference of the equations \eqref{eq:keycong} for $j=1$ and $j=-1$ yields
$T/t\equiv 0\bmod t/2$
as soon as $U\cap S_1\cap  S_{-1}\neq\emptyset$.
Further $\frac{T}{t}=\frac{p^n-1}{p-1}\equiv n\bmod p-1$,
so we find $n\equiv 0\bmod\frac{p-1}{2}$.
This is contrary to the hypothesis so either $U\cap S_{t/2}=\emptyset$
or $U\cap S_1\cap  S_{-1}=\emptyset$. Therefore $p^{n-k}-1-3m\le 0$ and we conclude.

In the case that
$\frac{T}{2}$ is even,
Equation~\eqref{eq:key4} simplifies to (using $0\ne d(\xi)=a$)
$$2jx+j^2\frac{T}{t}\equiv 0\bmod t,\quad j\in\left\{1,\frac{t}{2},-1\right\}$$
whenever $\xi=\gamma^x\in U\cap S_1\cap S_{t/2}\cap S_{-1}$.
In particular, assuming for a contradiction
$U\cap S_1\cap S_{t/2}\cap S_{t-1}\neq\emptyset$,
adding one to another the equations above for $j=1$ and $j=-1$, we get again
$$\frac{T}{t}\equiv 0\bmod \frac{t}{2},$$
thus
$n\equiv 0\bmod\frac{p-1}{2}$.
This contradicts
the result, thus $0=|U\cap S_1\cap S_{t/2}\cap S_{t-1}|\geq p^{n-k}-1-4m$
which concludes.
\end{proof}

The only cases where  none of the results above work are
\begin{itemize}
    \item $\gcd(T,p-1)=2$
and $T\equiv 2\bmod 4$,
\item $\gcd(T,p-1)=1$.
\end{itemize}
For that case, we have another method. Its applicability is not restrained by arithmetic conditions -- in contrary to all results so far, it therefore applies when $p=2$ -- but the bounds obtained are weaker, and $T$ needs to be large (at least a constant times $q^{3/4})$.
\begin{theorem}
\label{th:dhtlarge}
Suppose $T^2\geq 6q^{3/2}+6mq$.
Then  $p^k\gg T^{3/4}/\sqrt{q}$.
\end{theorem}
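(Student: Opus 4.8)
The plan is to exploit the symmetry $d(\xi^{-1})=d(\xi)$ (coming from $(-x)^2=x^2$) together with the additive structure of $F$, feeding the outcome into the count of Proposition~\ref{prop:intersInvers} and the square‑root bound $L_T=O(\sqrt T)$ of Proposition~\ref{prop:sqroots}. Throughout write $S_0=\{\xi\in G:F(\xi)=d(\xi)\}$, so that $|G\setminus S_0|\le m$, and $G'=\{\xi\in G:\xi-\xi^{-1}\in G\}$, whose cardinality $N$ satisfies $N\ge T^2/q-3q^{1/2}$ by Proposition~\ref{prop:intersInvers}.

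First I would derive a three–coset identity. Suppose $\xi\in G$ satisfies $\eta:=\xi-\xi^{-1}\in G$ and $\xi,\xi^{-1},\eta\in S_0$. From Equation~\eqref{eq:defF}, the linearity of $M$, and $d(\xi^{-1})=d(\xi)$, subtracting the relations at $\xi$ and $\xi^{-1}$ gives $M(\xi-\xi^{-1})=a_{U+\xi^{-1}}-a_{U+\xi}$; substituting this into $d(\eta)=M(\eta)+a_{U+\eta}$ yields
$$d(\xi-\xi^{-1})=a_{U+\xi-\xi^{-1}}+a_{U+\xi^{-1}}-a_{U+\xi}.$$
The decisive observation is that $\F_q/U$ is a group, so the coset $U+\eta=U+(\xi-\xi^{-1})$ equals $(U+\xi)-(U+\xi^{-1})$; hence the entire right‑hand side is a function of the pair of cosets $(U+\xi,\,U+\xi^{-1})$ alone, and therefore assumes at most $p^{2k}$ values. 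Consequently $d$ takes at most $p^{2k}$ distinct values on the set $E=\{\xi-\xi^{-1}:\xi\in G^\flat\}$, where $G^\flat$ is the set of $\xi\in G'$ satisfying the three membership conditions in $S_0$.

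The counting step then runs as follows. The map $\xi\mapsto\xi-\xi^{-1}$ is at most two–to–one on $G$ (its fibres are $\{\xi,-\xi^{-1}\}$), while $d$ is at most $L_T$–to–one because $d(\gamma^z)$ determines $z^2\bmod T$; thus $d$ attains at least $|E|/L_T$ values on $E$. Discarding from $G'$ the $\xi$ failing one of the conditions $\xi,\xi^{-1},\xi-\xi^{-1}\in S_0$ costs at most $4m$ elements (the last condition costs at most $2m$ because of the two–to–oneness), so $|G^\flat|\ge N-4m$ and $|E|\ge (N-4m)/2$. Combining with the bound on the number of values,
$$p^{2k}\ge \frac{|E|}{L_T}\ge\frac{N-4m}{2L_T}\ge\frac{T^2/q-3q^{1/2}-4m}{2L_T}.$$
The hypothesis $T^2\ge 6q^{3/2}+6mq$ rearranges to $T^2/q\ge 6q^{1/2}+6m$, which forces the numerator to be at least $\tfrac13\,T^2/q$; since $L_T=O(\sqrt T)$ this gives $p^{2k}\gg T^{3/2}/q$, that is $p^k\gg T^{3/4}/\sqrt q$. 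Note that nothing in this argument uses any arithmetic condition on $p$ or $\gcd(T,p-1)$, which is why it covers precisely the cases left open and also applies when $p=2$.

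The step I expect to be decisive is the quotient‑group identity $U+(\xi-\xi^{-1})=(U+\xi)-(U+\xi^{-1})$, which collapses the naive count $p^{3k}$ (three independent cosets) to $p^{2k}$. Without it one only obtains $p^{3k}\gg T^{3/2}/q$, i.e.\ $p^k\gg T^{1/2}/q^{1/3}$, which is strictly weaker than the claimed bound. The secondary difficulty is the bookkeeping of the exceptional set: one must check that each of the three conditions costs only $O(m)$ elements of $G'$ and that the constant $6$ in the hypothesis is large enough to absorb these losses while leaving a positive proportion of $N$, as verified in the displayed inequality above.
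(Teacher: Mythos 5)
Your proposal is correct and follows essentially the same route as the paper: the identity $d(\xi-\xi^{-1})=a_{V-W}+a_W-a_V$ for the pair of cosets $(V,W)=(U+\xi,U+\xi^{-1})$, the two-to-one map $\xi\mapsto\xi-\xi^{-1}$, the bound $L_T=O(\sqrt T)$, and Proposition~\ref{prop:intersInvers}, with the hypothesis absorbing the error terms. The only cosmetic difference is that you count distinct values of $d$ on $E$ (at most $p^{2k}$, at least $|E|/L_T$) where the paper pigeonholes a single large fiber $S_{V,W}$ on which $d(\xi-\xi^{-1})$ is constant; these are the same double-counting argument, and your $4m$ bookkeeping is in fact slightly more careful than the paper's $3m$.
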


\begin{proof}
Let
$$
S=\{\xi\in G:\xi-\xi^{-1}\in G,\,F\text{ and } d \text{ coincide on }\{\xi,\xi^{-1},\xi-\xi^{-1}\}\}.
$$
Further, for every $(V,W)\in\cV^2$, denote
$S_{V,W}=S\cap V\cap W^{-1},$
where ${W}^{-1}=\{\xi^{-1}: \xi\in W\setminus\{0\}\}$.
Since $d(\xi)=d(\xi^{-1})$, we infer
\begin{align*}
    0=d(\xi)-d(\xi^{-1})=M(\xi)+a_V-M(\xi^{-1})-a_{W}=M(\xi-\xi^{-1})+a_V-a_W
\end{align*}
for all $\xi\in S_{V,W}$.
Therefore 
we have
$$d(\xi-\xi^{-1})=M(\xi-\xi^{-1})+a_{V-W}=a_{V-W}+a_W-a_V, \quad \mbox{for all } \xi\in S_{V,W}.$$
By the pigeonhole principle, since $\bigcup_{V,W}S_{V,W}=S$,
there exist $(V,W)\in\cV^2$ such that 
$|S_{V,W}|\geq p^{-2k}|S|$.
Moreover, for any $b\in\F_q$, the equation $\xi-\xi^{-1}=b$ has only at most two solutions, being equivalent to a quadratic equation
in the field $\F_q$.
Let $a=a_{V-W}+a_W-a_V$ and $N=|\{\zeta\in G:d(\zeta)=a\}|$.
This is at most the maximal number of square roots of an integer $j$ modulo $T$, which is denoted by $L_T$.
Further,
the argument above yields
$$L_T\geq N\geq \frac{1}{2}|S_{V,W}|\ge \frac{1}{2p^{2k}}\left(|\{\xi\in G:\xi-\xi^{-1}\in G\}|-3m\right).$$
Applying Proposition \ref{prop:intersInvers} and Proposition \ref{prop:sqroots},
we find
\begin{equation}
\label{eq:lt}
T^{1/2}\gg L_T\geq\frac{1}{2p^{2k}}\left(\frac{T^2}{q}-3q^{1/2}-3m\right).
\end{equation}
By hypothesis, the term in the parenthesis is at least $T^2/(2q)$,
so that $T^{1/2}\gg T^2/(qp^{2k})$ and we conclude.
\end{proof}
\begin{remark}
    In the important case where $T$ is a prime, or more generally $T=rb$
    where $r$ is a prime and $b$ is bounded, we have $L_T=O(\sqrt{b})=O(1)$ so Equation~\eqref{eq:lt} gives the much better bound
    $p^k\gg T/\sqrt{q}$ provided that $T^2\geq 6q^{3/2}+6mq$.
\end{remark}

Theorem \ref{th:dhtlarge} requires $T\gg q^{3/4}.$
We supply one last result, again free of any arithmetic constraints on $T$ and $p$, that kicks in as soon as $T\gg q^{1/2}$. 
Our argument seems unable to accommodate the $m$ values where $F$ and $d$ may disagree, so we will assume
$m=0$ from now on, thus $F=d$ on $G$.
\begin{proposition}
\label{prop:summands}
Let $r\geq 1$ be an integer.
Let $(V_i,W_i)_{i=1,\ldots,r}\in\cV^{2r}$.
    The map 
    $$(\xi_1,\ldots,\xi_r)\mapsto F\left(\sum_{i=1}^r (\xi_i-\xi_i^{-1})\right)$$
    is constant on $\prod_{i=1}^r V_i\cap W_i^{-1}$.
\end{proposition}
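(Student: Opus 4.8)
The plan is to reduce the assertion, by the $\F_p$-linearity of $M$, to the one-variable cancellation already exploited in the proof of Theorem~\ref{th:dhtlarge}. Writing $\eta=\sum_{i=1}^r(\xi_i-\xi_i^{-1})$ and using the defining relation \eqref{eq:defF}, I would begin from
$$F(\eta)=M(\eta)+a_{U+\eta}=\sum_{i=1}^r M(\xi_i-\xi_i^{-1})+a_{U+\eta},$$
so that it suffices to check separately that the coset $U+\eta$ and each individual summand $M(\xi_i-\xi_i^{-1})$ are independent of the choice of $(\xi_1,\ldots,\xi_r)\in\prod_{i=1}^r V_i\cap W_i^{-1}$.

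For the additive constant, the plan is simply to track $\eta$ modulo $U$. Since $\xi_i\in V_i$ and $\xi_i^{-1}\in W_i$, we have $U+\xi_i=V_i$ and $U+\xi_i^{-1}=W_i$, and therefore $U+\eta=\sum_{i=1}^r(V_i-W_i)$ is a single coset depending only on the data $(V_i,W_i)_{i}$, not on the representatives $\xi_i$. Hence $a_{U+\eta}$ is constant on the product set.

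For the linear part I would argue index by index. The crucial input is the symmetry $d(\xi)=d(\xi^{-1})$ of the Diffie--Hellman mapping, which holds for $\xi\in G=\gen{\gamma}$ since $d(\gamma^x)=\gamma^{x^2}=\gamma^{(-x)^2}=d(\gamma^{-x})$. As $m=0$, the map $F$ coincides with $d$ on all of $G$, so for $\xi_i\in G\cap V_i\cap W_i^{-1}$ one obtains
$$M(\xi_i)+a_{V_i}=F(\xi_i)=d(\xi_i)=d(\xi_i^{-1})=F(\xi_i^{-1})=M(\xi_i^{-1})+a_{W_i},$$
whence $M(\xi_i-\xi_i^{-1})=a_{W_i}-a_{V_i}$, manifestly independent of $\xi_i$.

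Putting the two parts together yields $F(\eta)=\sum_{i=1}^r(a_{W_i}-a_{V_i})+a_{\sum_{i=1}^r(V_i-W_i)}$, a quantity determined by $(V_i,W_i)_{i}$ alone, which is the desired constancy. The only genuinely substantial ingredient is the per-summand identity $M(\xi_i-\xi_i^{-1})=a_{W_i}-a_{V_i}$, i.e.\ precisely the $r=1$ cancellation from Theorem~\ref{th:dhtlarge}; the passage to arbitrary $r$ is then a formal consequence of linearity. The point that requires care is that this identity needs $\xi_i$, and hence $\xi_i^{-1}$, to lie in $G$, which is exactly where the hypothesis $m=0$ enters.
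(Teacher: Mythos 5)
Your proof is correct and follows essentially the same route as the paper's: both expand $F$ via \eqref{eq:defF}, use linearity of $M$ to split off $\sum_{i=1}^r M(\xi_i-\xi_i^{-1})$, track the coset $\sum_{i=1}^r(V_i-W_i)$, and use $F=d$ on $G$ together with $d(\xi)=d(\xi^{-1})$ to identify each $M(\xi_i-\xi_i^{-1})$ with $a_{W_i}-a_{V_i}$, arriving at the identical constant value $a_{\sum_{i=1}^r(V_i-W_i)}+\sum_{i=1}^r(a_{W_i}-a_{V_i})$. If anything, you are slightly more explicit than the paper in flagging that the cancellation needs $\xi_i\in G$ (the paper tacitly treats the points of $V_i\cap W_i^{-1}$ as lying in $G$, which is all that is used in the subsequent corollary).
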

\begin{proof}
    Let $(\xi_1,\ldots,\xi_r)\in \prod_{i=1}^r V_i\cap W_i^{-1}$.
    Then \begin{align*}
       F\left(\sum_{i=1}^r (\xi_i-\xi_i^{-1})\right)&=M\left( \sum_{i=1}^r (\xi_i-\xi_i^{-1})\right)+a_{\sum_{i=1}^r(V_i-W_i)}\\
       &=\sum_{i=1}^r(M(\xi_i)-M(\xi_i^{-1}))+a_{\sum_{i=1}^r(V_i-W_i)}\\
       &=\sum_{i=1}^r(F(\xi_i)-a_{V_i}-F(\xi_i^{-1})+a_{W_i})+a_{\sum_{i=1}^r(V_i-W_i)}\\
       &=\sum_{i=1}^r(d(\xi_i)-d(\xi_i^{-1}))+a_{\sum_{i=1}^r(V_i-W_i)}+\sum_{i=1}^r (a_{W_i}-a_{V_i})\\
       &=a_{\sum_{i=1}^r(V_i-W_i)}+\sum_{i=1}^r (a_{W_i}-a_{V_i}).
    \end{align*}
    The last member of this sequence of equalities clearly is constant
    on $\prod_{i=1}^r V_i\cap W_i^{-1}$ so we are done.
\end{proof}
Recall the notation $hB=\{b_1+\ldots+b_h:b_1,\ldots,b_h\in B\}$.
\begin{corollary}
    \label{cor:hB}
    Let $B=\{\xi-\xi^{-1}:\xi\in G\}$.
    Suppose $G\subset hB$ for some $h\ge 1$.
    Then $$p^k\geq N(T)^{1/(2h)}.$$
\end{corollary}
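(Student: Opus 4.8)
The plan is to exploit Proposition~\ref{prop:summands} to argue that the value of $F$ (equivalently, since $m=0$, of $d$) at any point of $G$ is completely determined by a bounded amount of ``coset data'', and then to count. First I would unpack the hypothesis $G\subset hB$: every $g\in G$ admits a representation $g=\sum_{i=1}^h(\xi_i-\xi_i^{-1})$ with $\xi_1,\ldots,\xi_h\in G$. To such a representation I would associate the tuple of cosets $(V_i,W_i)_{i=1}^h\in\cV^{2h}$, where $V_i=U+\xi_i$ and $W_i=U+\xi_i^{-1}$; note that then $\xi_i\in V_i\cap W_i^{-1}$ for each $i$, so the product $\prod_{i=1}^h(V_i\cap W_i^{-1})$ is nonempty.

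Next I would package Proposition~\ref{prop:summands} into a well-defined map. For every tuple $(V_i,W_i)_{i=1}^h\in\cV^{2h}$ with $\prod_{i=1}^h(V_i\cap W_i^{-1})\neq\emptyset$, I choose any $(\xi_1,\ldots,\xi_h)$ in this product and set $\Phi\big((V_i,W_i)_i\big)=F\big(\sum_{i=1}^h(\xi_i-\xi_i^{-1})\big)$. Proposition~\ref{prop:summands} says exactly that this quantity does not depend on the choice of representatives, so $\Phi$ is well defined on the set of admissible tuples, of which there are at most $|\cV|^{2h}=p^{2hk}$. By the first paragraph, for every $g\in G$ the value $d(g)=F(g)$ equals $\Phi$ evaluated at the coset tuple arising from some representation of $g$; hence the value set $\{d(g):g\in G\}$ is contained in the image of $\Phi$.

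Finally I would count. The image of $\Phi$ has cardinality at most the number of admissible tuples, that is, at most $p^{2hk}$. On the other hand, $d$ takes exactly $N(T)$ distinct values on $G$. Combining the two bounds gives $N(T)\leq p^{2hk}$, and taking $(2h)$-th roots yields $p^k\geq N(T)^{1/(2h)}$, which is the claim.

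I do not anticipate a serious obstacle here: essentially all the content is carried by Proposition~\ref{prop:summands}, whose purpose is precisely to make the assignment $\Phi$ well defined, after which the argument reduces to an elementary pigeonhole count. The only points demanding a little care are bookkeeping ones: confirming that $F=d$ on all of $G$ (which holds because we assumed $m=0$), recalling that $d$ genuinely assumes $N(T)$ distinct values, and observing that the inversion $\xi\mapsto\xi^{-1}$ used to form $W_i^{-1}$ is harmless since every element of $G$ is nonzero.
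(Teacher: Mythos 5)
Your proof is correct and follows exactly the same route as the paper's: apply Proposition~\ref{prop:summands} with $r=h$ to see that $d=F$ takes at most $p^{2hk}$ distinct values on $G\subset hB$ (one value per admissible coset tuple), then compare with the $N(T)$ distinct values $d$ actually attains. The paper states this in two sentences; your write-up merely spells out the well-definedness and counting details that the paper leaves implicit.
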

\begin{proof}
    Applying Proposition \ref{prop:summands} with $r=h$,
    we conclude that $d$ takes on at most $p^{2hk}$ pairwise distinct values on
    $G$. Since by definition $d$ takes on exactly $N(T)$ pairwise distinct values,
    we deduce that $p^{2hk}\ge N(T)$ as desired.
\end{proof}
\begin{theorem}
    Suppose  $T> 2 q^{1/2}$. Then $p^k\geq N(T)^{1/32}\gg_\varepsilon T^{1/32-\varepsilon}$ for all $\varepsilon>0$.
\end{theorem}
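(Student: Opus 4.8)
The plan is to invoke Corollary~\ref{cor:hB} with $h=16$, which immediately yields $p^k\geq N(T)^{1/32}$, and then to finish with Corollary~\ref{cor:nteps}. So the whole task reduces to proving that $G\subset 16B$, where $B=\{\xi-\xi^{-1}:\xi\in G\}$; in fact I would aim for the stronger statement $16B=\F_q$.

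First I would record the relevant sizes. The set $B$ is symmetric, since $-(\xi-\xi^{-1})=\xi^{-1}-(\xi^{-1})^{-1}\in B$. Writing $C=\{\xi+\xi^{-1}:\xi\in G\}$, the maps $\xi\mapsto\xi-\xi^{-1}$ and $\xi\mapsto\xi+\xi^{-1}$ are at most $2$-to-$1$ on $G$ (their fibres being contained in $\{\xi,-\xi^{-1}\}$ and $\{\xi,\xi^{-1}\}$ respectively), so $|B|\geq T/2$ and $|C|\geq T/2$. The hypothesis $T>2q^{1/2}$ therefore gives $|B|,|C|>q^{1/2}$, whence $|B|\,|C|>q$.

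The key step, and the one I expect to be the crux, is an algebraic identity turning a twofold sumset of $B$ into a product set, so that Glibichuk's theorem (Proposition~\ref{prop:glru}) becomes applicable. For $\alpha,\beta\in G$ put $\xi=\alpha\beta\in G$ and $\eta=\alpha\beta^{-1}\in G$; a direct expansion gives
\begin{equation*}
(\xi-\xi^{-1})+(\eta-\eta^{-1})=(\alpha-\alpha^{-1})(\beta+\beta^{-1}).
\end{equation*}
As $\alpha,\beta$ range independently over $G$, the right-hand side ranges over the product set $BC$, while the left-hand side lies in $2B$; hence $BC\subseteq 2B$. Since $B$ is symmetric and $|B|\,|C|>q$, Proposition~\ref{prop:glru} yields $8\,BC=\F_q$, and therefore
\begin{equation*}
\F_q=8\,BC\subseteq 8(2B)=16B.
\end{equation*}

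Finally I would feed $G\subset\F_q=16B$ into Corollary~\ref{cor:hB} (with $h=16$) to obtain $p^k\geq N(T)^{1/32}$, and then apply the bound $N(T)\gg_\varepsilon T^{1-\varepsilon}$ from Corollary~\ref{cor:nteps}, which gives $p^k\gg_\varepsilon T^{1/32-\varepsilon}$ for every $\varepsilon>0$. The only genuine obstacle is spotting the factorisation identity above; once the product set $BC$ is exhibited inside $2B$, the symmetry of $B$ together with the size bound lets Proposition~\ref{prop:glru} do all the work, and the constant $16=2\cdot 8$ (hence the exponent $1/32$) traces exactly to the factor $8$ in that proposition.
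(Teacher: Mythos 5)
Your proposal is correct and follows essentially the same route as the paper: the same sets $B$ and $C$ (the paper calls $C$ by the name $A$), the same identity showing $BC\subseteq B+B$, the same application of Proposition~\ref{prop:glru} to get $8BC=\F_q\subseteq 16B$, and the same finish via Corollary~\ref{cor:hB} and Corollary~\ref{cor:nteps}. The only differences are cosmetic (notation and the order in which the inclusion $BC\subseteq 2B$ and the Glibichuk--Rudnev step are invoked).
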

\begin{proof}

    Let $B=\{\xi-\xi^{-1}:\xi\in G\}$ and $A=\{\xi+\xi^{-1}:\xi\in G\}$.
    Then $|A|\geq |G|/2$ and $|B|\geq |G|/2$,
    since for every $a\in\F_q$, both equations $\xi-\xi^{-1}=a$ and $\xi+\xi^{-1}=a$ admit at most two solutions.
    Further $B$ is symmetric (that is $B=-B$).
    By hypothesis 
    $$|A|\cdot |B|\geq \frac{T^2}{4}>q,$$ 
    which implies
    $8AB=\F_q$ by Proposition \ref{prop:glru}.

    Note that $(\xi-\xi^{-1})(\zeta+\zeta^{-1})
    =\xi\zeta-(\xi\zeta)^{-1}+\xi\zeta^{-1}-(\xi\zeta^{-1})^{-1}$,
    so that $AB\subset B+B$.
    In particular, we infer that $G\subset 16B$.
    We conclude by an appeal to Corollary \ref{cor:hB} and Corollary~\ref{cor:nteps}.
\end{proof}
To conclude this section, we state one more result which covers the cases
where $T$ is as small as $q^\varepsilon$.
\begin{theorem}
Suppose $A=\{\xi+\xi^{-1}:\xi\in G\}$ is not included in a proper subfield of $\F_q$.
    For every $\varepsilon>0$,
    there exists a constant $c=c(\varepsilon)>0$ such that
    $T\geq 2q^\varepsilon$ implies $p^k\ge T^c$.
\end{theorem}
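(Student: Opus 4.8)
The plan is to run the scheme of the preceding theorem, but to replace the appeal to Proposition~\ref{prop:glru} (which needs $|A|\gg q^{1/2}$) by the stronger growth result Proposition~\ref{prop:gl11}, which only requires $|A|>q^{\varepsilon/2}$ together with non-degeneracy of $A$. As before set $A=\{\xi+\xi^{-1}:\xi\in G\}$ and $B=\{\xi-\xi^{-1}:\xi\in G\}$. Since each equation $\xi+\xi^{-1}=a$ (resp.\ $\xi-\xi^{-1}=b$) is quadratic in $\xi$ and so has at most two solutions in $\F_q$, we have $|A|\geq T/2\geq q^\varepsilon$, and $B$ contains a nonzero element $b_0$ as soon as $T>2$.

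First I would verify that $A$ is non-degenerate, and this is where the hypothesis is genuinely used. The key observation is that $A^2\subset A+A$: indeed $(\zeta+\zeta^{-1})(\eta+\eta^{-1})=\left(\zeta\eta+(\zeta\eta)^{-1}\right)+\left(\zeta\eta^{-1}+(\zeta\eta^{-1})^{-1}\right)\in A+A$. Consequently the $\F_p$-span $\F'=\mathrm{span}_{\F_p}(A)$ is closed under multiplication, hence is a finite integral domain inside $\F_q$ and therefore a subfield of $\F_q$ containing $A$. By hypothesis $A$ lies in no proper subfield, so $\F'=\F_q$; thus $A$ spans $\F_q$ over $\F_p$. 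In particular $A$ cannot lie in any translate $\xi_0\F$ of a proper subfield $\F$, as such a set is an $\F_p$-subspace of dimension $<n$. Hence $A$ is non-degenerate.

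Now I would apply Proposition~\ref{prop:gl11} with exponent $\varepsilon/2\in(0,1/2)$ (legitimate since $|A|\geq q^\varepsilon>q^{\varepsilon/2}$): there are integers $m$ and $r=r(\varepsilon)$ depending only on $\varepsilon$ with $rA^{2m-2}=\F_q$. Writing $s=2m-2$, the identity $A^2\subset A+A$ iterates to $A^s\subset 2^{s-1}A$, whence $\F_q=rA^s\subset r2^{s-1}A=:HA$ for a constant $H=H(\varepsilon)$. Multiplying through by the fixed nonzero $b_0\in B$ (a bijection of $\F_q$) gives $\F_q=\F_q\,b_0=(HA)b_0=H(Ab_0)$. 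The bilinear identity already exploited in the previous theorem, namely $(\zeta+\zeta^{-1})(\xi-\xi^{-1})=\left(\zeta\xi-(\zeta\xi)^{-1}\right)+\left(\zeta^{-1}\xi-(\zeta^{-1}\xi)^{-1}\right)$, shows $Ab_0\subset AB\subset B+B$, so $\F_q=H(Ab_0)\subset 2HB$. Hence $G\subset\F_q=hB$ with $h=2H$ a constant depending only on $\varepsilon$.

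It then remains to invoke Corollary~\ref{cor:hB}, giving $p^k\geq N(T)^{1/(2h)}$, together with Corollary~\ref{cor:nteps}, giving $N(T)\gg_\varepsilon T^{1-\varepsilon}$; absorbing the implied constant into a slightly smaller exponent yields $p^k\geq T^c$ for some $c=c(\varepsilon)>0$, as desired. I expect the main obstacle to be the passage from the multiplicative growth statement $\F_q=rA^{2m-2}$ supplied by Proposition~\ref{prop:gl11} to the purely additive statement $G\subset hB$ demanded by Corollary~\ref{cor:hB}: this is precisely what the two product identities accomplish, first collapsing products of $A$ into a bounded sumset of $A$, then converting the remaining $A$-factor into $B$ via multiplication by $b_0$. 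The non-degeneracy step, carried out through the span-is-a-subfield argument, is the other place where care and the hypothesis on $A$ are essential.
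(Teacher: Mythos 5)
Your proof is correct and takes essentially the same route as the paper's: Proposition~\ref{prop:gl11}, followed by the identities $A^2\subset A+A$ and $AB\subset B+B$ to convert $rA^{2m-2}=\F_q$ into $\F_q=r2^{2m-2}B$ (your multiplication by a single nonzero $b_0\in B$ is exactly the paper's ``a fortiori'' step), and finally Corollary~\ref{cor:hB} combined with Corollary~\ref{cor:nteps}. The one genuine addition is your span-is-a-subfield argument showing that the hypothesis ``$A$ is not contained in a proper subfield'' implies the non-degeneracy actually required by Proposition~\ref{prop:gl11}; the paper applies that proposition directly and leaves this implication implicit (it is immediate for $p>2$ since $2\in A$, but your argument is what is needed when $p=2$), so this extra care is a welcome improvement.
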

\begin{proof}
    As above,
    let $$B=\{\xi-\xi^{-1}:\xi\in G\}\quad\mbox{and}\quad A=\{\xi+\xi^{-1}:\xi\in G\}.$$
    Then, as already seen, $|A|\geq |G|/2\gg q^\varepsilon$ and $|B|\geq |G|/2$.
    By Proposition \ref{prop:gl11},
    we have $rA^{2m-2}=\F_q$, with $r$ and $m$ depending on $\varepsilon$
    defined in that proposition.
    A fortiori, $rA^{2m-2}B=\F_q$.
    Note that 
    $$(\xi+\xi^{-1})(\zeta+\zeta^{-1})
    =\xi\zeta+(\xi\zeta)^{-1}+\xi\zeta^{-1}+(\xi\zeta^{-1})^{-1},$$
    so that $A^2\subset A+A$ and by induction $A^k\subset 2^{k-1}A$ for all $k\ge 1$.
    Further, $$A^kB\subset B\cdot 2^{k-1}A\subset 2^{k-1}BA$$
    and since we saw above that $AB\subset 2B$
    we conclude $A^kB\subset 2^kB$ for every $k\ge 1$, so finally
    $(r2^{2m-2})B=\F_q$.
     Again we conclude by an appeal to Corollary \ref{cor:hB} and Corollary~\ref{cor:nteps}.
\end{proof}
\begin{remark}
\begin{enumerate}
    \item 
    The proof provides a constant $c=c(\varepsilon)$ which decays
    exponentially with~$\varepsilon$, that is $c=\exp(-O(\varepsilon^{-1}))$.
    \item The condition
    $A=\{\xi+\xi^{-1}:\xi\in G\}$ is not included in a proper subfield of $\F_q$ is equivalent to the condition: for every proper divisor $d$ of $n$,
    $T$ divides neither $p^d-1$ nor $p^d +1$.
    Indeed, a proper subfield of $\F_q$ is a field of the form $\F_{p^d}$ for some $d\mid n$, $d<n$.
We have the sequence of equivalent assertions
\begin{align*}
    \xi+\xi^{-1}\in \F_{p^d} &\Leftrightarrow (\xi+\xi^{-1})^{p^d}=\xi+\xi^{-1}\\
&\Leftrightarrow\xi^{p^d}+\xi^{-p^d}=\xi+\xi^{-1}\\
&\Leftrightarrow \xi^{2p^d}+1-\xi^{p^d+1}-\xi^{p^d-1}=0\\
&\Leftrightarrow (\xi^{p^d-1}-1) (\xi^{p^d+1}-1)=0,
\end{align*}
so $A\subset \F_{p^d}\Leftrightarrow T\mid p^d-1\text{ or }T\mid p^d +1$ as claimed.
Observe that $T\mid p^d-1\Leftrightarrow G\subset \F_{p^d}$.
Observe additionally that since $T\mid q-1=p^n-1$,
the property $T\mid p^d +1$ for some $d\mid n$ is only possible if $n/d$ and thus $n$ is even or $T\le 2$.
So if $n$ is odd and $T>2$, $A\subset \F_{p^d}\Leftrightarrow G\subset \F_{p^d}$.
\end{enumerate}
\end{remark}
\section{Discrete logarithm}
\label{DL}

Now we consider the mapping $P$ described in the introduction, which can be identified with the discrete logarithm.
Fix a prime~$p$, an integer $n\geq 1$
and $T\mid q-1$ where $q=p^n$. 
Let $\gamma\in\F_q^*$ have multiplicative order $T$ and let $P:\gen{\gamma}\rightarrow\F_q$ be the corresponding discrete logarithm mapping.
\subsection{Some results on the mapping $x\mapsto \xi_x$}
First we need a couple of lemmas about the associated mapping 
$x\mapsto \xi_x$. Recall that this mapping, described in the introduction, depends on the choice, which we henceforth assume fixed, of
an ordered basis $(\beta_1,\ldots,\beta_n)$ of $\F_q$ as a vector space over $\F_p$.
\begin{proposition}
\label{xix1}
     For $x=0,1,\ldots,q-2$
    the difference
$\zeta_x=\xi_{x+1}-\xi_x$ 
is
$$
\zeta_x=\sum_{i=1}^j\beta_i,$$
where $j$ is the minimum in $\{1,2,\ldots,n\}$ with $x_j\le p-2$ in the $p$-adic expansion
$$x=x_1+x_2p+\cdots+x_np^{n-1},\quad 0\le x_i<p,$$
of $x$.
\end{proposition}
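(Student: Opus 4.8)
The plan is to analyze how adding $1$ to $x$ affects its base-$p$ digits and then translate this digit-level change into the corresponding change in $\xi_x$, crucially exploiting the fact that the coordinates are reduced modulo $p$. First I would check that the index $j$ is well defined: for $0\le x\le q-2$ we have $x\ne p^n-1$, so the digits $x_1,\ldots,x_n$ are not all equal to $p-1$, and hence there is a smallest index $j\in\{1,\ldots,n\}$ with $x_j\le p-2$.

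Next I would describe the base-$p$ expansion of $x+1$ in terms of that of $x$. By minimality of $j$ we have $x_1=\cdots=x_{j-1}=p-1$ while $x_j\le p-2$, so adding $1$ triggers a carry that turns each of the first $j-1$ digits into $0$, increments the $j$-th digit to $x_j+1$, and leaves the higher digits $x_{j+1},\ldots,x_n$ unchanged. Writing $x+1=\sum_{i=1}^n y_i p^{i-1}$ with $0\le y_i<p$, this says $y_i=0$ for $i<j$, $y_j=x_j+1$, and $y_i=x_i$ for $i>j$.

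The final step is to compute the difference coordinatewise. Since $\xi_x=\sum_{i=1}^n x_i\beta_i$ with the coefficients interpreted in $\F_p$, I would write
\[
\zeta_x=\xi_{x+1}-\xi_x=\sum_{i=1}^n (y_i-x_i)\beta_i,
\]
and then evaluate each coefficient in $\F_p$: for $i<j$ we have $y_i-x_i=0-(p-1)\equiv 1\pmod p$; for $i=j$ we have $y_j-x_j=1$; and for $i>j$ we have $y_i-x_i=0$. Summing gives $\zeta_x=\sum_{i=1}^{j-1}\beta_i+\beta_j=\sum_{i=1}^j\beta_i$, as claimed.

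No step here is a genuine obstacle; the only point demanding care is precisely the reduction modulo $p$ in the last step. The integer digit change for each of the first $j-1$ positions is $-(p-1)$, but because $\xi_x$ is built $\F_p$-linearly from the digits, this rollover contributes $+1$ rather than $-(p-1)$. It is exactly this simplification that makes the contributions telescope into the clean partial sum $\sum_{i=1}^j\beta_i$, so I would make sure to state it explicitly rather than silently.
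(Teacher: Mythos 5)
Your proof is correct and follows essentially the same route as the paper's: both analyze the carry propagation in the base-$p$ expansion of $x+1$ (digits $1,\ldots,j-1$ roll over from $p-1$ to $0$, digit $j$ increments, higher digits unchanged) and then compute the difference of coordinates in $\F_p$, where $0-(p-1)\equiv 1 \pmod p$ yields the partial sum $\sum_{i=1}^j\beta_i$. The only cosmetic difference is that the paper writes out $\xi_x$ and $\xi_{x+1}$ explicitly and subtracts, whereas you subtract digit by digit.
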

\begin{proof}
 Note that the minimum $j$ exists since $x\le q-2$.
 We have
\begin{eqnarray*}
    x&=&(p-1)(1+p+\cdots+p^{j-2})+x_jp^{j-1}+x_{j+1}p^j+\cdots +x_np^{n-1}\\
    &=&p^{j-1}-1+x_jp^{j-1}+x_{j+1}p^j+\cdots +x_np^{n-1}
\end{eqnarray*}
and 
$$x+1=(x_j+1)p^{j-1}+x_{j+1}p^j+\cdots +x_np^{n-1}.$$
We find
$$\xi_x=-\sum_{i=1}^{j-1}\beta_j+\sum_{i=j}^nx_i\beta_i,$$
$$\xi_{x+1}=(x_j+1)\beta_{j}+\sum_{i=j+1}^nx_i\beta_i,$$
and thus 
$$\xi_{x+1}=\xi_x+\sum_{i=1}^j\beta_i,$$
which concludes the proof.
\end{proof}
Thus, the difference $\xi_{x+1}-\xi_x$ has at most $n$ possible values.
In general, the difference $\xi_{x+a}-\xi_x$, for a fixed $a$,
can take on more distinct values, but at most $2^{n-1}$ as the next proposition shows.
\begin{proposition}
\label{xixa}
    Fix a positive integer $a\in\intv{0}{T-1}$.
    \begin{enumerate}
        \item Suppose $0\leq x<T-a$.
        Write the base $p$ decomposition
    $$a=a_1+a_2p+\cdots+a_np^{n-1},\quad 0\le a_i<p.$$
    Then 
    $$P(\gamma^{x+a})-P(\gamma^x)=\xi_{x+a}-\xi_x\in a_1\beta_1+\sum_{j=2}^n\{a_j\beta_j,(a_j+1)\beta_j\}.$$
    If additionally $a_1=a_2=\cdots=a_j=0$ for some $j\in\intv{0}{n}$, the coordinate of $\xi_{x+a}-\xi_x$ along
    $\beta_{j+1}$ in the basis $(\beta_1,\ldots,\beta_n)$ is $a_{j+1}.$
    \item 
    If $T-a\leq x<T$, let
    $$b=T-a=b_1+b_2p+\cdots+b_np^{n-1},\quad 0\le b_i<p.$$
    Then
    $$P(\gamma^{x+a})-P(\gamma^x)=\xi_{x+a-T}-\xi_x\in -b_1\beta_1+\sum_{j=2}^n\{-b_j\beta_j,-(b_j+1)\beta_j\}.$$
    If additionally $b_1=b_2=\cdots=b_j=0$ for some $j\in\intv{1}{n}$, the coordinate of $\xi_{x+a-T}-\xi_x$ along
    $\beta_{j+1}$ in the basis $(\beta_1,\ldots,\beta_n)$ is $-b_{j+1}.$\end{enumerate}
\end{proposition}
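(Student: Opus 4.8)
The plan is to separate the routine reduction from $P$ to $\xi$ from the combinatorial heart, which is an analysis of base-$p$ addition. First I would dispose of the indices. Since $\gamma$ has order $T$, the hypothesis $0\le x<T-a$ of part (1) gives $0\le x+a<T$, so that $x+a$ is a legitimate index and $P(\gamma^{x+a})=\xi_{x+a}$; in part (2), $T-a\le x<T$ gives $T\le x+a<2T$, hence $0\le x+a-T<a\le T-1$ and $\gamma^{x+a}=\gamma^{x+a-T}$, so $P(\gamma^{x+a})=\xi_{x+a-T}$. Both displayed identities $P(\gamma^{x+a})-P(\gamma^x)=\xi_{x+a}-\xi_x$ and $P(\gamma^{x+a})-P(\gamma^x)=\xi_{x+a-T}-\xi_x$ are then immediate, and everything reduces to understanding how base-$p$ addition acts on the coordinate vector.

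The core is part (1), which I would prove through the schoolbook addition algorithm. Set $\epsilon_0=0$ and, for $i=1,\ldots,n$, define the carry $\epsilon_i\in\{0,1\}$ and the digit $c_i\in\{0,\ldots,p-1\}$ by $x_i+a_i+\epsilon_{i-1}=c_i+p\epsilon_i$. No overflow past $n$ digits occurs because $x+a<T\le q=p^n$, and $\epsilon_i\in\{0,1\}$ because $x_i+a_i+\epsilon_{i-1}\le 2p-1$. Since $\xi_{x+a}=\sum_{i=1}^n c_i\beta_i$ and $\xi_x=\sum_{i=1}^n x_i\beta_i$ with coefficients read in $\F_p$, the coordinate of $\xi_{x+a}-\xi_x$ along $\beta_i$ is $(c_i-x_i)\bmod p=(a_i+\epsilon_{i-1})\bmod p$. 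As $\epsilon_{i-1}\in\{0,1\}$, this coordinate equals $a_i$ or $a_i+1$, the latter understood in $\F_p$ so that it vanishes when $a_i=p-1$; this is exactly the asserted membership, and for $i=1$ the initial carry $\epsilon_0=0$ pins the coordinate to $a_1$.

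For the refinement in part (1) I would note that if $a_1=\cdots=a_j=0$ then an immediate induction gives $\epsilon_1=\cdots=\epsilon_j=0$, since $\epsilon_{i-1}=0$ forces $x_i+0+\epsilon_{i-1}=x_i\le p-1$ and hence $\epsilon_i=0$. Therefore the coordinate along $\beta_{j+1}$ is $(a_{j+1}+\epsilon_j)\bmod p=a_{j+1}$ exactly. Part (2) then follows from part (1) with no further computation: put $b=T-a$ and $y=x+a-T$, so that $y\ge 0$, $y+b=x$, and $0\le y<a=T-b$, whence the hypotheses of part (1) hold for the pair $(y,b)$. Applying part (1) to $\xi_{y+b}-\xi_y$ and using $\xi_{x+a-T}-\xi_x=\xi_y-\xi_{y+b}=-(\xi_{y+b}-\xi_y)$ yields the stated membership with every coordinate negated, and turns the exact value $b_{j+1}$ of the refinement into $-b_{j+1}$.

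The argument involves no genuine obstacle; the only points that demand care are the reduction of coefficients modulo $p$, so that the wrap-around case $a_i=p-1$ is correctly absorbed into the set $\{a_i\beta_i,(a_i+1)\beta_i\}$, and verifying the no-overflow bound $x+a<q$ together with the vanishing of the carry $\epsilon_0$ into the lowest position.
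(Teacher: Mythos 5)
Your proof is correct and takes essentially the same approach as the paper's: a carry analysis of schoolbook base-$p$ addition showing each coordinate of $\xi_{x+a}-\xi_x$ is $a_i$ or $a_i+1$ in $\F_p$, with no carry into the lowest digit and carries vanishing while the digits of $a$ do. The only cosmetic difference is that you track carries recursively ($\epsilon_i$) while the paper phrases the carry condition via partial sums $\sum_{i\le j}(x_i+a_i)p^{i-1}<p^j$; your reduction of part (2) to part (1) via $y=x-b$ and negation is identical to the paper's.
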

\begin{proof}
We prove the first claim.
Write the base $p$ decomposition
    $$x=x_1+x_2p+\cdots+x_np^{n-1},\quad 0\le x_i<p.$$
    Let $j\in\intv{1}{n}$.
    If 
    $$\sum_{i=1}^jx_ip^{i-1}+\sum_{i=1}^ja_ip^{i-1}<p^j,$$
    then the digit of $p^j$ in $x+a$ is $x_{j+1}+a_{j+1}\in \Z$ or $x_{j+1}+a_{j+1}-p\in \Z$; otherwise it is
    $x_{j+1}+a_{j+1}+1\in \Z$ or $x_{j+1}+a_{j+1}+1-p\in \Z$.
    Therefore the $j$th coordinate of $\xi_{x+a}$ in the base $(\beta_1,\ldots,\beta_n)$ is either $x_j+a_j\in \F_p$ or $x_j+a_j+1\in \F_p$. Since $\sum_{i=1}^jx_ip^{i-1}<p^j$, the first case is always true when
    $a_1=a_2=\cdots=a_j=0$.
    This concludes the proof of the first assertion.

    For the second one, write $y=x+a-T=x-b$, so $0\leq y<a=T-b$.
    Then $\xi_{x+a-T}-\xi_x=-(\xi_{y+b}-\xi_y)$ and we can apply the first claim.
\end{proof}
\subsection{Bounds for the additive index of the discrete logarithm}
For this section, we
fix an element $\gamma\in \F_q$ of order $T$, an integer $m$ with
$0\le m\leq T$ 
and a self-mapping $F$ of $\F_q$
that coincides with $P$ on all but
at most $m$ elements of $G=\gen{\gamma}$.
Our goal is to show that the least additive index of $F$ is large.

Like for the Diffie-Hellman mapping, since we have information on $F$ on only $T$ points, we do not expect
this index to be much more than $T$ in general. 

To achieve this goal, we fix a subspace $U\leq \F_q$ and denote
by $k\leq n$ its codimension. Let $\cV=\F_q/U$ be the collection of all
cosets (aka translates) of $U$. Assume there exist $M\in\F_q[X]$ linearised
and $(a_V)_{V\in\cV}\in\F_q^\cV$ such that
\begin{equation}
\label{eq:defF2}
F(\xi)=M(\xi)+a_{U+\xi}
\end{equation}
for all $\xi\in\F_q$.
We need to show that $k$ is large.

First, we get an excellent result in the case where $T=q-1$, that is, $\gamma$ is a generator of~$\F_q^*$. In the case $m=0$, we recover Theorem \ref{th:disclogintro}.
\begin{theorem}
Suppose $T=q-1$.
    Then  
    $$p^k\ge \frac{q}{n+2m+2}.$$
\end{theorem}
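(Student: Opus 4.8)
The plan is to exploit the structure given by Proposition~\ref{xix1}, which shows that for $T=q-1$ the consecutive differences $\zeta_x=\xi_{x+1}-\xi_x$ take only $n$ distinct values, namely the partial sums $\sum_{i=1}^j\beta_i$ for $j=1,\ldots,n$. I would focus on pairs of \emph{consecutive} powers of $\gamma$, i.e.\ on $\xi=\gamma^x$ and $\gamma\xi=\gamma^{x+1}$, and compute $F(\gamma^{x+1})-F(\gamma^x)$ in two ways. On the one hand, whenever $F$ agrees with $P$ at both points, this difference equals $P(\gamma^{x+1})-P(\gamma^x)=\xi_{x+1}-\xi_x=\zeta_x$. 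On the other hand, using Equation~\eqref{eq:defF2} and writing $V=U+\xi$, $V'=U+\gamma\xi$, it equals $M(\gamma\xi-\xi)+a_{V'}-a_V=M((\gamma-1)\xi)+a_{V'}-a_V$. Since $\gamma-1$ is a fixed field element (not generally in $\F_p$), the map $\xi\mapsto M((\gamma-1)\xi)$ is still $\F_p$-linear in $\xi$, but the issue is that $V'$ is a translate of $V$ by $(\gamma-1)\xi$, which is not determined by $V$ alone.

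To get around this I would instead run the argument of the Diffie--Hellman section in additive form, keying the constant on the \emph{coset pair} $(V,V')$. For each ordered pair $(V,V')\in\cV^2$, consider the set of $x$ with $\gamma^x\in V$, $\gamma^{x+1}\in V'$, and $F=P$ at both $\gamma^x$ and $\gamma^{x+1}$; call its contribution to $G$ the cell $C_{V,V'}$. On $C_{V,V'}$ the quantity $M(\gamma\xi-\xi)+a_{V'}-a_V$ is \emph{not} constant (it varies with $\xi$ through $M$), so a direct ``constant-on-cells'' count as in Theorem~\ref{th:dh04} will not work. The fix is to observe that $\zeta_x$ only takes $n$ values and is determined by the $p$-adic digit pattern of $x$; what I really want to bound is how many distinct indices $x$ can share a common coset behaviour. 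I would therefore count as follows: the number of distinct values of $\zeta_x=\xi_{x+1}-\xi_x$ over all valid $x$ is exactly $n$ (one for each $j$), while on the right-hand side, fixing the pair $(V,V')$ pins down $a_{V'}-a_V$ and forces $M((\gamma-1)\xi)$ to lie in one of $n$ shifted affine fibres. The cleanest route is to count the total number of consecutive pairs $(\gamma^x,\gamma^{x+1})$ that survive (both points in $G$, both satisfying $F=P$): there are at least $q-1-1-2m$ of them (discarding the wrap-around pair $x=q-2$ and the at most $2m$ pairs touching an exceptional point), and partition them by $(V,V')$ into at most $p^{2k}$ classes.

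The decisive step, which I expect to be the main obstacle, is turning ``$\zeta_x$ takes $n$ values'' into a bound of the quoted shape $p^k\ge q/(n+2m+2)$ rather than $p^{2k}$. The clean way to recover a single power of $p^k$ is to avoid pairing $V$ with $V'$ independently and instead note that $\gamma\cdot U$ is itself a single coset structure: writing the relation as $M((\gamma-1)\xi)=\zeta_x-(a_{V'}-a_V)$, I would group the $x$'s by the value of $j=j(x)$ (giving $\zeta_x$) and by the coset $V=U+\xi$ alone, using that $V'=U+\gamma\xi$ is determined by $V$ \emph{up to} the coset of $U$ under multiplication by $\gamma$, which is a fixed bijection of $\cV$; hence $(V,V')$ ranges over only $p^k$ pairs, not $p^{2k}$. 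Under this grouping, for fixed $(V,V')$ the right-hand side is a fixed element minus $M((\gamma-1)\xi)$, and since $\xi$ ranges over $V\cap G$ while $\zeta_x$ takes at most $n$ values, the relation $M((\gamma-1)\xi)=\zeta_x-(a_{V'}-a_V)$ partitions $V\cap G$ into at most $n$ subsets on which $M((\gamma-1)\xi)$ is constant; but $M\circ((\gamma-1)\cdot)$ is injective on any coset-transversal portion except for its kernel, and the kernel has dimension at most $n-(n-k)=k$ over a neutral coset, forcing the surviving $x$ with a given $\zeta$-value to number at most $|U\cap\ker|$. Assembling: the $\ge q-2-2m$ good consecutive pairs are distributed over $p^k$ coset-images and $n$ possible $\zeta$-values, so $p^k\cdot n\ge q-2-2m$, i.e.\ $p^k\ge (q-2-2m)/n$; sharpening the bookkeeping of the excluded pairs (one wrap-around and the $2m$ exceptional, plus the single neutral-coset correction) yields the stated $p^k\ge q/(n+2m+2)$. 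The technical crux is precisely this injectivity/kernel argument that collapses $p^{2k}$ to $p^k$, and getting the additive constants in the denominator exactly right.
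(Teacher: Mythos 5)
Your plan starts from the same two ingredients as the paper (Proposition \ref{xix1} and the injectivity of the discrete logarithm), but the step you yourself call ``the decisive step'' is wrong, and it cannot be repaired in the form you state it. You claim that $V'=U+\gamma\xi$ is determined by $V=U+\xi$ because multiplication by $\gamma$ is ``a fixed bijection of $\cV$''. It is not: $\cV=\F_q/U$ is a quotient of \emph{additive} groups, and $\xi\mapsto\gamma\xi$ descends to $\cV$ only if $\gamma U=U$. Since $\gamma$ generates $\F_q^*$, a subspace invariant under multiplication by $\gamma$ is invariant under multiplication by all of $\F_q$, hence equals $\{0\}$ or $\F_q$; so for $0<k<n$ the coset $U+\gamma\xi$ genuinely depends on $\xi$ and not just on $U+\xi$ (indeed $\gamma V$ is a coset of the \emph{different} subspace $\gamma U$, and it can meet up to $\min(p^k,p^{n-k})$ cosets of $U$). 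Consequently the pairs $(V,V')$ range over up to $p^{2k}$ classes, and your pigeonhole can give at best $p^{2k}n\gg q$, i.e.\ $p^k\gg\sqrt{q/n}$, far from the stated bound. There is a second gap in the kernel step: you assert that the relevant kernel has dimension at most $n-(n-k)=k$, but $M$ is an arbitrary linearised polynomial and nothing in the setup bounds $\ker M$. What one can extract (from injectivity of $P$, under the contradiction hypothesis that $p^{n-k}$ is large compared to $m$) is that $\ker M\cap U$ is trivial; but your count needs $U\cap(\gamma-1)^{-1}\ker M$ to be small, and since $(\gamma-1)U\neq U$ in general this does not follow.

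The paper avoids both problems with one restriction you did not make: it uses only exponents $x$ with $\gamma^x\in U$, the neutral coset. Writing $\gamma-1=\gamma^y$ (possible precisely because $T=q-1$), one has $\gamma^{x+1}=\gamma^x+\gamma^{x+y}$, and since $\gamma^x\in U$ the points $\gamma^{x+1}$ and $\gamma^{x+y}$ lie in the \emph{same} coset of $U$; hence \eqref{eq:defF2} collapses to $F(\gamma^{x+1})=F(\gamma^x)-a_U+F(\gamma^{x+y})$, with no unknown coset constants left over. On the set $S$ of good $x$ this reads $P(\gamma^{x+y})=\xi_{x+1}-\xi_x+a_U$: the right-hand side takes at most $n$ values by Proposition \ref{xix1}, while the left-hand side takes $|S|$ distinct values by injectivity of $P$, so $p^{n-k}-2-2m\le|S|\le n$, which is exactly the claimed bound. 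No pigeonhole over coset pairs and no analysis of $\ker M$ is needed; the multiplicative shift by $\gamma$ is converted into an additive shift inside a single coset, and this conversion is the idea your proposal is missing.
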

\begin{proof}
By Equation~\eqref{eq:defF2},
for any $V\in\cV$ and
 $\gamma^x\in U,\gamma^z\in V$,
we have
\begin{align*}
    F(\gamma^x+\gamma^z)&=M(\gamma^x+\gamma^z)+a_V\\
    &=M(\gamma^x)+M(\gamma^z)+a_V\\
    &=F(\gamma^x)-a_U+F(\gamma^z).
\end{align*}
Let $y\in\Z$ be such that $\gamma-1=\gamma^y$.
When $z=y+x$, the calculation above yields
\begin{equation}
    \label{eq:gamxz}
    F(\gamma^{x+1})=F(\gamma^x+\gamma^{y+x})=F(\gamma^x)-a_U+F(\gamma^{y+x}).
\end{equation}
Let $$S=\{x\in\intv{0}{T-2}:\gamma^x\in U,\, F\text{ and } P\text{ coincide on both }\gamma^x,\gamma^{x+y}\}.$$
By hypothesis, $|S|\geq |U|-2-2m=p^{n-k}-2-2m$.


When $x\in S$, Equation~\eqref{eq:gamxz} gives
\begin{equation}
    \label{eq:leftright}
\xi_{x+1}-\xi_x+a_U=P(\gamma^{y+x}).
\end{equation}
When $x$ ranges in $S$, the element $\xi_{x+1}-\xi_x$, and hence the left-hand side of Equation~\eqref{eq:leftright} can take only
at most $n$ values by Proposition \ref{xix1}.
The right-hand side, by injectivity of the discrete logarithm, takes on exactly $|S|$ values.
Thus $p^{n-k}-2-2m\le |S|\le n$ which concludes.
\end{proof}

We can work with smaller $T$, representing $\gamma-1$ as a sum of elements from $\gen{\gamma}$, but the quality of the bound decays quickly with the number of summands required. The proof runs initially along similar lines as above.
\begin{theorem}
\label{th:discloggen}
    Assume $$\gamma-1\in rG=\left\{\sum_{i=1}^r \xi_i:(\xi_1,\ldots,\xi_r)\in G^r\right\},$$ where $r$ is an integer coprime to $p$.
    Suppose $m<\frac{T-1}{r+2}$. 
    Then
    $$p^k\ge \left(\frac{T-(r+2)m-1}{rn}\right)^{1/(r+1)}.$$
\end{theorem}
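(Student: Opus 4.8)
The plan is to mimic the strategy of the preceding theorem, replacing the single relation $\gamma-1=\gamma^y$ by the assumed representation $\gamma-1=\sum_{i=1}^r\gamma^{y_i}$ with each $\gamma^{y_i}\in G$, and then to pay for the extra summands by tracking the cosets of all the shifted points. First I would fix such a representation and multiply by $\gamma^x$ to obtain
\[
\gamma^{x+1}=\gamma^x+\sum_{i=1}^r\gamma^{x+y_i}.
\]
Writing $V_0=U+\gamma^x$ and $V_i=U+\gamma^{x+y_i}$ and noting that the coset $W=U+\gamma^{x+1}$ equals $V_0+\sum_{i=1}^rV_i$, I would use the linearity of $M$ and the defining relation \eqref{eq:defF2}, exactly as in \eqref{eq:gamxz}, to express $F(\gamma^{x+1})$ in terms of $F(\gamma^x)$, the $F(\gamma^{x+y_i})$ and the constants $a_{V_0},\ldots,a_{V_r},a_W$. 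I would then define the good set
\[
S=\{x\in\intv{0}{T-2}: F\text{ and }P\text{ coincide on }\gamma^x,\gamma^{x+1},\gamma^{x+y_1},\ldots,\gamma^{x+y_r}\},
\]
so that $|S|\ge T-1-(r+2)m$, since each of the $m$ exceptional elements can spoil at most $r+2$ indices $x$ and the index $x=T-1$ is discarded. For $x\in S$ the relation collapses, after cancelling the $M$-terms, to
\[
\sum_{i=1}^r\xi_{x+y_i}=\zeta_x-C(V_0,\ldots,V_r),\qquad \zeta_x=\xi_{x+1}-\xi_x,
\]
where $C$ depends only on the cosets $V_0,\ldots,V_r$ because $W$ is determined by them.

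The counting step is then straightforward. By Proposition~\ref{xix1} the quantity $\zeta_x$ takes at most $n$ distinct values, and the tuple $(V_0,\ldots,V_r)\in\cV^{r+1}$ takes at most $p^{(r+1)k}$ values; hence the data $(\zeta_x,V_0,\ldots,V_r)$, and with it the whole right-hand side, takes at most $np^{(r+1)k}$ values on $S$. If I can show that the map
\[
x\longmapsto(\zeta_x,V_0,\ldots,V_r)
\]
is at most $r$-to-one on $S$, then $|S|\le rn\,p^{(r+1)k}$, and comparing with $|S|\ge T-1-(r+2)m$ gives $p^{(r+1)k}\ge (T-(r+2)m-1)/(rn)$, which is exactly the claimed bound.

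The bounded-fibre statement is the step I expect to be the main obstacle. The route I would attempt is the following. On a fibre the cosets are fixed, so for any two indices $x,x'$ occurring in it the difference $\delta=\gamma^x-\gamma^{x'}$ satisfies $\delta\in U$ and $\gamma^{y_i}\delta\in U$ for every $i$; summing the latter inclusions over $i$ and using $\gamma-1=\sum_i\gamma^{y_i}$ gives $(\gamma-1)\delta\in U$ and hence also $\gamma\delta\in U$. Thus all fibre elements lie in a single coset of the subspace $U\cap\gamma^{-1}U\cap\bigcap_i\gamma^{-y_i}U$, while at the same time the value of $\sum_i\xi_{x+y_i}$ is constant along the fibre. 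The delicate point, and the place where I expect the real work to concentrate and where the hypothesis $\gcd(r,p)=1$ should enter, is to convert these two pieces of information into the assertion that at most $r$ indices can occur: the $r+1$ coset constraints already cut the number of admissible $\gamma^x\in G$ down to roughly $T/p^{(r+1)k}$, and the additive constraint coming from the fixed value of $\sum_i\xi_{x+y_i}$—exploiting that multiplication by $r$ is invertible modulo $p$—should remove the remaining freedom down to a bounded number of solutions. Once this fibre bound is established, the theorem follows at once from the counting step.

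Finally, I would record the sanity check that for $r=1$ the fibre is a single point (the sum reduces to one injective discrete-logarithm value, as in the previous theorem), so the argument specialises correctly and the weaker exponent $1/(r+1)$ reflects precisely the loss incurred by the non-injectivity of the sum $\sum_i\xi_{x+y_i}$ once $r\ge 2$.
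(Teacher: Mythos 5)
Your setup reproduces the paper's proof framework exactly: the relation obtained by multiplying $\gamma-1=\sum_{i=1}^r\gamma^{y_i}$ by $\gamma^x$, the good set $S$ with $|S|\ge T-1-(r+2)m$, the count of at most $np^{(r+1)k}$ values for the data $(\zeta_x,V_0,\ldots,V_r)$, and the reduction of the theorem to the claim that this data map is at most $r$-to-one on $S$ (the paper phrases this contrapositively: assuming the bound fails, pigeonhole produces $r+1$ indices $x_0<\cdots<x_r$ in one fibre, and a contradiction is derived). However, the fibre bound is the actual content of the theorem, and your proposal leaves it as an acknowledged obstacle with only a heuristic sketch; that sketch would not close the gap. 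Your coset-intersection observation (that differences of fibre elements lie in $U\cap\gamma^{-1}U\cap\bigcap_i\gamma^{-y_i}U$) plays no role in the paper's argument and does not by itself bound the fibre size: $G$ can meet a coset of that subspace in many points, and the assertion that the additive constraint ``removes the remaining freedom'' is precisely what has to be proved.

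The two missing ingredients are the following. First, the mechanism that converts constancy of $\sum_i P(\gamma^{y_i+x})$ along a fibre into arithmetic information is Proposition \ref{xixa}: if $x$ and $x'=x+b$ lie in the fibre and no exponent $y_i+x$ wraps around modulo $T$ between them, then each difference $\xi_{z_i+b}-\xi_{z_i}$ has coordinate along $\beta_1$ exactly equal to the first base-$p$ digit $b_1$ of $b$; summing over $i$ gives $rb_1\equiv 0\bmod p$, hence $b_1=0$ since $\gcd(r,p)=1$, and then, feeding this back into Proposition \ref{xixa}, every digit of $b$ vanishes in turn, so $b=0$. This digit-by-digit extraction is entirely absent from your proposal. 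Second --- and this is why the correct fibre bound is $r$ rather than $1$ --- the digit argument breaks down when, comparing two fibre elements, some exponents $y_i+x$ wrap past $T$ while others do not, since the two cases of Proposition \ref{xixa} then give incompatible coordinate values. The paper resolves this by taking all $r+1$ fibre elements and studying the Euclidean quotients $f(i,j)=\lfloor (y_i+x_j)/T\rfloor$, which are monotone in both $i$ and $j$: either two consecutive indices $x_j,x_{j+1}$ exhibit the same wraparound pattern, in which case the digit argument yields $x_{j+1}-x_j=0$, or the pattern degrades completely from all-$0$ at $j=0$ to all-$1$ at $j=r$, in which case comparing $x_0$ with $x_r$ via the second part of Proposition \ref{xixa} yields $x_r-x_0=T$; both are contradictions. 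Without this wraparound case analysis, your ``at most $r$-to-one'' claim has no proof, and a naive comparison of two fibre elements cannot deliver it.
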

\begin{proof}
Assume the contrary thus 
\begin{equation}\label{Tinq}
\frac{T-(r+2)m-1}{p^{(r+1)k}}>rn.
\end{equation}
By Equation~\eqref{eq:defF2}, for any integers $x,z_1,\ldots,z_r$, we have
\begin{equation}
    \label{eq:xyi}
F\left(\gamma^x+\sum_{i=1}^r\gamma^{y_i}\right)=F(\gamma^x)+\sum_{i=1}^rF(\gamma^{z_i})+a_{\sum_{i=1}^rV_i+W}-\sum_{i=1}^ra_{V_i}-a_W,
\end{equation}
where
$W,V_1,\ldots,V_r$ are the cosets of $U$ containing $\gamma^x,\gamma^{z_1},\ldots,\gamma^{z_r}$, respectively.
    By hypothesis $\gamma-1\in rG$.
Therefore, let $0\leq y_1\leq \cdots\leq y_r\leq T-1$ be such that
$\gamma^{y_1}+\cdots+\gamma^{y_r}=\gamma-1$.
Then $\gamma^{y_1+x}+\cdots+\gamma^{y_r+x}=\gamma^{x+1}-\gamma^x$ for any integer $x$,
so that Equation~\eqref{eq:xyi}
gives
\begin{equation}
    \label{eq:xx2}
    F(\gamma^{x+1})=F(\gamma^x)+\sum_{i=1}^rF(\gamma^{y_i+x})+a_{\sum_{i=1}^rV_i+W}-\sum_{i=1}^ra_{V_i}-a_W,
\end{equation}
where again
$W,V_1,\ldots,V_r$ are the classes modulo $U$ of $\gamma^x,\gamma^{y_1+x},\ldots,\gamma^{y_r+x}$, respectively.
Let $$S=\{x\in\intv{0}{T-2}:F\text{ and } P\text{ coincide on all of }\gamma^x,\gamma^{x+1},\gamma^{y_1+x},\ldots,\gamma^{y_r+x}\}.$$
By definition, $|S|\geq T-1-(r+2)m$.
When $x\in S$, Equation~\eqref{eq:xx2} becomes
\begin{equation}
    \label{eq:xx1}
    \xi_{x+1}=\xi_x+\sum_{i=1}^rP(\gamma^{y_i+x})+a_{\sum_{i=1}^rV_i+W}-\sum_{i=1}^ra_{V_i}-a_W,
\end{equation}
When $x$ ranges in $S$,
the $(r+2)$-tuple
$(U+\gamma^x,U+\gamma^{y_1+x},\ldots,U+\gamma^{y_r+x},\xi_{x+1}-\xi_x)$
can take at most $p^{(r+1)k}n$ values in $(\F_q/U)^{r+1}\times\F_q$ by Proposition \ref{xix1}.
By the pigeonhole principle, since
$|S|/(p^{(r+1)k}n)>r$ by \eqref{Tinq},
one of these values $(W,V_1,\ldots,V_r,v)$, which we henceforth fix, is taken at least $(r+1)$ times.
Thus by Equation~\eqref{eq:xx1}, there exist $0\leq x_0<\cdots<x_{r}<T$ such that
\begin{equation}
    \label{eq:sumxiequal}
\sum_{i=1}^rP(\gamma^{y_i+x_0})=\sum_{i=1}^rP(\gamma^{y_i+x_1})=\cdots=\sum_{i=1}^rP(\gamma^{y_i+x_{r}})
=v-a_{\sum_{i=1}^rV_i+W}+\sum_{i=1}^ra_{V_i}+a_W.\end{equation}
When $r=1$, this simply yields
$P(\gamma^{y_1+x_0})=P(\gamma^{y_1+x_1})$
which, by injectivity of the map 
$x\mapsto P(\gamma^x)$ on the interval $\intv{0}{T-1}$
and periodicity, implies $x_1\equiv x_0\bmod T$, a contradiction.

The general case is more intricate.
Denote  $y_i^{(j)}=y_i+x_j\in\intv{0}{2T-2}$, for $i=1,\ldots,r$, 
and $j=0,\ldots,r$.
For any fixed $j$, this is a non-decreasing sequence in $i$; and for any fixed~$i$, it is an increasing sequence in $j$.
Let $f(i,j)=\left\lfloor\frac{y_i^{(j)}}{T} \right\rfloor$ be the Euclidean quotient of $y_i^{(j)}$ by~$T$.
Thus the sequence
$i\mapsto f(i,j)$ is non-decreasing. Also the sequence $j\mapsto f(i,j)$
is non-decreasing. Further $0\leq f(i,j)\leq 1$ for every $j$.
Therefore, for every $j$, there exist $\ell_j\in\intv{0}{r}$
such that
\begin{equation*}
    f(i,j)=\left\lbrace 
    \begin{array}{lc}
0, & i\leq \ell_j \\ 
1, & i>\ell_j
\end{array} 
    \right.
\end{equation*}
and the sequence $(\ell_j)_{j\in\intv{0}{r}}$ is non-increasing.
We distinguish two cases.
\begin{enumerate}
    \item There exist $j$
    such that $\ell_j=\ell_{j+1}$.
    Thus $f(i,j)=f(i,j+1)$ for all $i$.
Write $z_i=y_i^{(j)}$ and $b=x_{j+1}-x_j$, so that $z_i=y_i+x_j$ and $z_i+b=y_i+x_{j+1}$ have the same quotient~$f(i,j)$ in the Euclidean division by $T$.
Then taking the difference of
the $(j+2)$th and $(j+1)$th term from the left in Equation~\eqref{eq:sumxiequal}, we have
$$
\sum_{i=1}^r(P(\gamma^{z_i+b})-P(\gamma^{z_i}))=0
$$
and $z_i$ and $z_i+b$ have the same quotient $q_i$ in the Euclidean division by $T$ for each~$i$.
Thus 
$$
\sum_{i=1}^r(\xi_{z_i-q_iT+b}-\xi_{z_i-q_iT})=0,
$$
where $0\leq z_i-q_iT<T-b$ for each $i$.
Applying Proposition \ref{xixa}, with the base~$p$ decomposition
$b=\sum_{i=1}^nb_ip^{i-1}$,
we find that 
$rb_1\equiv 0\bmod p$, whence $b_1=0$.
Applying Proposition \ref{xixa} repeatedly, we find successively that all the digits of $b$ are 0 so that $b=0$, a contradiction.
\item The sequence $\ell_j$ is decreasing, hence
 $\ell_0=r$ and $\ell_r=0$.
 We infer $f(i,0)=0$ and $f(i,r)=1$ for all $i\in\intv{1}{r}$, in particular, $y_i+x_r\ge T$ and $y_i+x_0<T$.
 
 Write $z_i=y_i^{(0)}$ and $b=x_{r}-x_0$, so that $T-b\leq z_i<T$ for all $i$.
 Then
 taking the difference of
the $(r+1)$th term and the first term from the left in Equation~\eqref{eq:sumxiequal},
 we have
$$
\sum_{i=1}^r(P(\gamma^{z_i+b})-P(\gamma^{z_i}))=\sum_{i=1}^r(\xi_{z_i+b-T}-\xi_{z_i})=0
$$
and we can apply Proposition \ref{xixa} repeatedly as above to obtain
$b=T$, which is also a contradiction.
\end{enumerate}
This finishes the proof.
\end{proof}

\begin{remark}
    In view of the proof and Proposition \ref{xix1},
    we can replace the $n$ in the bound on the additive index of $F$
    by $1+\lfloor\log_p(T-1)\rfloor$; since this improves only moderately the bound but makes it more difficult to read,
    we preferred stating the theorem the way we did.
\end{remark}
\begin{corollary}
    Assume $\gamma$ has multiplicative order $T>q^{3/4}$.
    Then the additive index of the discrete logarithm mapping $P$ to the base $\gamma$
    is at least $$\left(\frac{T-1}{2n}\right)^{1/3}\quad\mbox{if}\quad  p>2$$ and at least 
    $$\left(\frac{T-1}{3n}\right)^{1/4}\quad \mbox{if}\quad p=2.$$
\end{corollary}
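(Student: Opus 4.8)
The plan is to read off this corollary as a direct specialisation of Theorem~\ref{th:discloggen} in the error-free regime $m=0$, where the admissible number of summands $r$ is furnished by the sum-set estimate of Proposition~\ref{prop:sumset}. Since the statement concerns the discrete logarithm mapping $P$ itself, I would set $F=P$ and hence $m=0$ throughout, so that the hypothesis $m<\frac{T-1}{r+2}$ collapses to the trivial requirement $T>1$ and Theorem~\ref{th:discloggen} yields $p^k\ge\left(\frac{T-1}{rn}\right)^{1/(r+1)}$ for whatever $r$ we manage to use.

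First I would note that $T>q^{3/4}>1$ forces $\gamma\neq 1$, so that $\gamma-1$ is a nonzero element of $\F_q$ and representing it as a sum over $G$ makes sense. Next, observing that $q^{3/4}=q^{1/2+1/(2\cdot 2)}$, I would invoke Proposition~\ref{prop:sumset} with $s=2$ to deduce $r(G)\le 2$; in particular $\F_q^*\subset 2G$, whence $\gamma-1\in 2G$.

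The only point requiring care is the coprimality condition $\gcd(r,p)=1$ built into Theorem~\ref{th:discloggen}, and this is what dictates the case split on $p$. For $p>2$ the value $r=2$ is coprime to $p$, so applying the theorem with $r=2$ and $m=0$ gives $p^k\ge\left(\frac{T-1}{2n}\right)^{1/3}$, which is the first claimed bound. For $p=2$, however, $r=2$ is \emph{not} coprime to $p$; here I would instead use the observation recorded before Proposition~\ref{prop:sumset} that for $T\ge 2$ one has $\F_q^*\subset rG$ for every $r>r(G)$, so that $\gamma-1\in 3G$ already because $r(G)\le 2<3$. Since $r=3$ is coprime to $2$, the theorem with $r=3$ and $m=0$ yields $p^k\ge\left(\frac{T-1}{3n}\right)^{1/4}$, the second claimed bound. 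The argument is thus essentially bookkeeping; the sole genuine subtlety, and hence what I would flag as the ``main obstacle'', is that characteristic two obstructs the natural choice $r=2$ and forces the slightly wasteful passage to $r=3$, which is exactly why the exponent degrades from $1/3$ to $1/4$ in that case.
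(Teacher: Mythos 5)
Your proof is correct and follows essentially the same route as the paper: apply Proposition~\ref{prop:sumset} with $s=2$ to get $\F_q^*\subset 2G$ (hence $\gamma-1\in 2G$, and $\gamma-1\in 3G$ by the remark preceding that proposition), then invoke Theorem~\ref{th:discloggen} with $m=0$ and $r=2$ for $p>2$, respectively $r=3$ for $p=2$. Your explicit handling of the coprimality condition and of $\gamma-1\neq 0$ only makes explicit what the paper leaves implicit.
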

\begin{proof}
    Because $T>q^{3/4}$,  Proposition \ref{prop:sumset} provides
   $\F_q^*\subset  G+G$. Thus $\gamma-1\in 2G$ and also $\gamma-1\in 3G$. 
    We therefore apply Theorem \ref{th:discloggen}, with $r=2$ (and $m=0$) if $p\neq 2$ and $r=3$ if $p=2$, 
    and conclude.
\end{proof}

\begin{corollary}
    Assume $T>q^{1/2}$.
    Then the additive index
    of the discrete logarithm mapping
    is at least 
    $$\left(\frac{T-1}{8n}\right)^{1/9}\quad \mbox{if}\quad p>2,$$ and
    at least
    $$\left(\frac{T-1}{9n}\right)^{1/10}\quad \mbox{if}\quad p=2.$$
\end{corollary}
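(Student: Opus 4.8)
The plan is to mirror the proof of the preceding corollary, replacing the sumset bound valid for $T>q^{3/4}$ by the weaker one available once $T>q^{1/2}$, and then to handle the coprimality hypothesis of Theorem~\ref{th:discloggen} separately in the two parity regimes for $p$. Throughout I take $m=0$, so that the bound of Theorem~\ref{th:discloggen} simplifies to $\left((T-1)/(rn)\right)^{1/(r+1)}$, and it remains only to exhibit, in each case, a suitable integer $r$ coprime to $p$ with $\gamma-1\in rG$.

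First I would invoke Proposition~\ref{prop:sumset}: since $T>q^{1/2}$, it yields $r(G)\le 8$, that is $\F_q^*\subset 8G$. In particular $\gamma-1\in 8G$, which provides a representation of $\gamma-1$ as a sum of $8$ elements of $G$ to feed into Theorem~\ref{th:discloggen}. Note also that $T>q^{1/2}\ge\sqrt{2}>1$ forces $T\ge 2$, and that the hypothesis $m<(T-1)/(r+2)$ of Theorem~\ref{th:discloggen} is then satisfied trivially by $m=0$.

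For $p>2$ the integer $r=8=2^3$ is coprime to $p$, so Theorem~\ref{th:discloggen} applies directly with $r=8$ and gives $p^k\ge\left((T-1)/(8n)\right)^{1/9}$, as claimed. The only real obstacle is the case $p=2$: here $r=8$ is \emph{not} coprime to $p$, so Theorem~\ref{th:discloggen} cannot be used with this value of $r$. I would resolve this by appealing to the remark preceding Proposition~\ref{prop:sumset}, namely that for $T\ge 2$ every integer $r>r(G)$ also represents each nonzero element of $\F_q$ as a sum of $r$ elements of $G$. Since $r(G)\le 8<9$ and $T\ge 2$, I obtain $\gamma-1\in 9G$; as $9$ is odd and hence coprime to $p=2$, Theorem~\ref{th:discloggen} now applies with $r=9$ and delivers $p^k\ge\left((T-1)/(9n)\right)^{1/10}$.

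I expect no genuine difficulty beyond this parity bookkeeping: once the correct value of $r$ (coprime to $p$, and with $\gamma-1\in rG$) has been secured, both bounds follow by direct substitution into Theorem~\ref{th:discloggen}. The steps to verify carefully are therefore that $T\ge 2$ (so the sumset remark legitimately upgrades $8G$ to $9G$ in the even characteristic case) and that no smaller admissible $r$ is available from $T>q^{1/2}$ alone, which is indeed the case since Proposition~\ref{prop:sumset} only guarantees $r(G)\le s$ for $s\le 7$ under the stronger hypothesis $T>q^{1/2+1/(2s)}$.
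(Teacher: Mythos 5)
Your proposal is correct and follows exactly the paper's own (much terser) argument: invoke Proposition~\ref{prop:sumset} to get $r(G)\le 8$ from $T>q^{1/2}$, then apply Theorem~\ref{th:discloggen} with $r=8$ when $p>2$ and with $r=9$ when $p=2$, using the remark preceding Proposition~\ref{prop:sumset} to pass from $8G$ to $9G$. The coprimality and $T\ge 2$ bookkeeping you spell out is precisely what the paper leaves implicit.
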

\begin{proof}
    Indeed, in this case $\F_q=8 G$ by Proposition \ref{prop:sumset}, so we can apply Theorem \ref{th:discloggen} with $r=8$ or $r=9$, respectively.
\end{proof}
\begin{corollary}
For every $\varepsilon>0$, there is a constant integer $C_\varepsilon>0$ such that the following holds.
    Assume $T>q^\varepsilon$
    and $T$ does not divide any of the numbers $p^d-1$ for $d\mid n,d<n$.
    Then the additive index of the discrete logarithm mapping is at least 
    $$\left(\frac{T-1}{C_\varepsilon n}\right)^{1/(C_\varepsilon+1)}.$$
\end{corollary}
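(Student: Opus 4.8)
The plan is to combine the sumset bound of Proposition~\ref{prop:sumset} with Theorem~\ref{th:discloggen}, exactly as in the two preceding corollaries, but now invoking the third regime of Proposition~\ref{prop:sumset}, the one whose constant depends only on $\varepsilon$. First I would observe that the hypothesis ``$T$ divides none of the $p^d-1$ for $d\mid n$, $d<n$'' is precisely the statement that $G=\gen{\gamma}$ is not contained in a proper subfield of $\F_q$, as recorded in the third clause of Proposition~\ref{prop:sumset}. Since $T>q^\varepsilon$ and $q>1$ force $T\ge 2$, in particular $\gamma\neq 1$ and $\gamma-1\neq 0$. I may also assume $0<\varepsilon<\tfrac12$, replacing $\varepsilon$ by a smaller value if necessary (which only weakens the conclusion), so that Proposition~\ref{prop:sumset} applies verbatim.

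By that proposition there is a constant $r_0=r_0(\varepsilon)=160\cdot 6^{m_0-3}\left(1+\tfrac{\log m_0}{\log 2}\right)$, with $m_0=\lceil\varepsilon^{-1}+\tfrac12\rceil$, such that $r(G)\le r_0$; in particular $\gamma-1\in r(G)\,G$, since $\gamma-1\in\F_q^*$. To apply Theorem~\ref{th:discloggen} I need the number of summands to be coprime to $p$, and this is the only genuinely fiddly point of the argument. Here I would use the remark following the definition of $r(G)$: for $T\ge 2$ and every integer $r>r(G)$ one still has $\gamma-1\in rG$. Hence if $r(G)$ is coprime to $p$ I take $r=r(G)\le r_0$; otherwise $p\mid r(G)$, in which case $r=r(G)+1$ is coprime to $p$, exceeds $r(G)$, and still satisfies $\gamma-1\in rG$. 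Either way I obtain an integer $r\le r_0+1$ coprime to $p$ with $\gamma-1\in rG$.

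I then set $C_\varepsilon=r_0+1$ and apply Theorem~\ref{th:discloggen} with this $r$ and $m=0$ (the constraint $m<\tfrac{T-1}{r+2}$ reduces to $T>1$, which holds), obtaining $p^k\ge\left(\tfrac{T-1}{rn}\right)^{1/(r+1)}$. The remaining step is the routine monotonicity passage from $r$ to the uniform constant $C_\varepsilon$. If $\tfrac{T-1}{C_\varepsilon n}\ge 1$, then because $r\le C_\varepsilon$ we have both $\tfrac{T-1}{rn}\ge\tfrac{T-1}{C_\varepsilon n}$ and $\tfrac{1}{r+1}\ge\tfrac{1}{C_\varepsilon+1}$, and raising a quantity $\ge 1$ to a larger exponent only increases it, so $\left(\tfrac{T-1}{rn}\right)^{1/(r+1)}\ge\left(\tfrac{T-1}{C_\varepsilon n}\right)^{1/(C_\varepsilon+1)}$. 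If instead $\tfrac{T-1}{C_\varepsilon n}<1$, the claimed lower bound lies below $1\le p^k$ and holds trivially. The only obstacle is thus bookkeeping rather than substance: engineering an admissible, $p$-coprime, $\varepsilon$-bounded number of summands and then absorbing the slack between $r$ and $C_\varepsilon$; all the analytic content is already packaged in Proposition~\ref{prop:sumset} and Theorem~\ref{th:discloggen}.
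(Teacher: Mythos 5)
Your proposal is correct and follows exactly the paper's route: the paper's own proof is the one-liner ``immediate consequence of Proposition~\ref{prop:sumset} and Theorem~\ref{th:discloggen}'', with the non-subfield hypothesis feeding the third clause of Proposition~\ref{prop:sumset}. Your additional care --- passing to $r(G)+1$ when $p\mid r(G)$ so that Theorem~\ref{th:discloggen}'s coprimality hypothesis holds, and the two-case monotonicity step absorbing the slack between $r$ and $C_\varepsilon$ --- fills in details the paper leaves implicit, but it is the same argument.
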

\begin{proof}
    Again this is an immediate consequence of Proposition \ref{prop:sumset} and Theorem \ref{th:discloggen}.
\end{proof}
Our next theorem, Theorem \ref{th:oldlog}, provides a different lower bound for the additive index of the discrete logarithm.
When it applies, Theorem \ref{th:oldlog} is usually superior to 
Theorem \ref{th:discloggen}. But it applies only under the arithmetic condition $\gcd(T,p-1)>1$,
which in particular requires $p>2$. The method of proof is a combination of the one used so far in this section and the method used for Theorems \ref{th:dh04} and
\ref{th:t2}.
\begin{theorem}
\label{th:oldlog}
    Assume $\gcd(T,p-1)>1$. 
    Then  $p^k\ge \frac{T-2m}{2^n}$.
\end{theorem}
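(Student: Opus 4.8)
The plan is to combine the multiplier trick underlying Theorems \ref{th:dh04} and \ref{th:t2} with the structure of the increments $\xi_{x+a}-\xi_x$ recorded in Proposition \ref{xixa}. Set $t=\gcd(T,p-1)>1$ and $g=\gamma^{T/t}$. As in Proposition \ref{prop:key}, the element $g$ has multiplicative order $t$, and since $t\mid p-1$ we have $g\in\F_p^*$; moreover $g\neq 1$ because $t>1$, so that $g-1\in\F_p^*$ is invertible (this already forces $p>2$). Write $a=T/t$, a positive integer with $1\le a\le T/2<T$, so that Proposition \ref{xixa} applies with this value of $a$.

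First I would exploit multiplication by $g$ exactly as in Section \ref{DH}. Fix $x$ such that $F$ and $P$ agree on both $\gamma^x$ and $g\gamma^x=\gamma^{(x+a)\bmod T}$, and let $V=U+\gamma^x$. Since $g\in\F_p$, we have $gV=U+g\gamma^x$ and $M(g\gamma^x)=gM(\gamma^x)$ by linearity; evaluating \eqref{eq:defF2} at $\gamma^x$ (so $M(\gamma^x)=\xi_x-a_V$) and then at $g\gamma^x$ and eliminating $M(\gamma^x)$ yields
$$\xi_{(x+a)\bmod T}=g\,\xi_x+c_V,\qquad c_V:=a_{gV}-g\,a_V,$$
where the right-hand side depends only on the coset $V$.

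The crucial observation is then that the increment $\delta:=\xi_{(x+a)\bmod T}-\xi_x=(g-1)\xi_x+c_V$ determines $\xi_x$, once $V$ is known, through $\xi_x=(g-1)^{-1}(\delta-c_V)$; this is exactly where invertibility of $g-1$ enters. Hence the map sending such an $x$ to the pair $(V,\delta)$ is injective on the relevant set, since $x\mapsto\xi_x$ is injective on $\intv{0}{T-1}$. By Proposition \ref{xixa}, applied with this $a$ and splitting according to whether $0\le x<T-a$ or $T-a\le x<T$, the increment $\delta$ ranges over a set of at most $2\cdot 2^{n-1}=2^n$ elements. Thus the number of admissible $x$ is at most $|\cV|\cdot 2^n=2^n p^k$.

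Finally I would control the good set. Let $S$ be the set of $x\in\intv{0}{T-1}$ for which $F=P$ at both $\gamma^x$ and $\gamma^{(x+a)\bmod T}$; since $x\mapsto (x+a)\bmod T$ permutes $\intv{0}{T-1}$ and $F$ differs from $P$ on at most $m$ elements of $G$, we get $|S|\ge T-2m$. Combining this with the injectivity bound gives $T-2m\le |S|\le 2^n p^k$, which is the claim. The only genuinely delicate point is the passage from the multiplier identity to the counting: one must recognise $\delta$ as the correct injective invariant and check that Proposition \ref{xixa} really caps its range at $2^n$, including the wrap-around case where $(x+a)\bmod T=x+a-T$. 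Everything else is a direct adaptation of the arguments already developed in Sections \ref{DH} and \ref{DL}.
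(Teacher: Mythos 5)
Your proposal is correct and follows essentially the same route as the paper's proof: the same multiplier $g=\gamma^{T/t}\in\F_p^*\setminus\{1\}$, the same identity $P(g\xi)=gP(\xi)+a_{gV}-ga_V$ leading to recovery of $\xi_x$ via invertibility of $g-1$, the same $2^n$-bound on the increments from Proposition~\ref{xixa} (split into the two cases, wrap-around included), and the same counting of the good set $S$ with $|S|\ge T-2m$ against the at most $2^np^k$ pairs $(V,\delta)$. Your phrasing of the conclusion as injectivity of $x\mapsto(V,\delta)$ is just a repackaging of the paper's comparison of the number of values on the two sides of its Equation~\eqref{eq:pxi}.
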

\begin{proof}
    Let $t=\gcd(T,p-1)>1$ and
    $g=\gamma^\frac{T}{t}$, thus $g\in G\cap \F_p^*$ and $g\neq 1$.
    Let $$S=\{\xi\in G:F\text { and }P\text{ coincide on } \{\xi,g\xi\}\}.$$ 
    By hypothesis,
    $|S|\ge T-2m$.
    By Proposition \ref{xixa}, the expression
    $$P(g\gamma^x)-P(\gamma^x)=P(\gamma^{x+\frac{T}{t}})-P(\gamma^x)$$ 
    takes on at most $2^n$ pairwise distinct values as $x$ ranges in $\Z$.
    In particular $P(g\xi)-P(\xi)$ takes on at most $2^n$ pairwise distinct values as $\xi$ ranges in
    $G$.
    In addition,
    by Equation~\eqref{eq:defF2},
    we have
    $P(g\xi)=gP(\xi)-a_{gV}+ga_{V}$ for every $V\in\cV,\,\xi\in V\cap S$.
    Thus, for any $\xi\in S$,
    we have
    \begin{equation}
        \label{eq:pxi}
        P(\xi)=(g-1)^{-1}(P(g\xi)-P(\xi)+a_{gV}-ga_V)
    \end{equation}
    where $V=U+\xi$.
    As $\xi$ ranges in $G$, the left-hand side of \eqref{eq:pxi} admits exactly $|S|\ge T-2m$ pairwise distinct values since the discrete logarithm is 
injective, whereas the right-hand side takes on at most $2^np^k$ distinct values.
We infer $p^k\ge (T-2m)/2^n$.
\end{proof}

\section*{Appendix: Linearised polynomials and linear maps}
\begin{proposition}
\label{prop:linmaplinpoly}
    Let $\F_q$ be a finite field of characteristic $p$, where $q=p^n$ and $n\ge 1$.
    Let $r\leq n$ be an integer.
    Let $U\subset \F_q$ be an $\F_p$-linear subspace of $\F_q$ of dimension $r$.
    Then the set $V$ of all $\F_p$-linear maps $U\rightarrow\F_q$ is equal to the set $W$ of all maps
    $U\rightarrow\F_q$ that are induced by linearized polynomials of degree at most $p^{r-1}$.
\end{proposition}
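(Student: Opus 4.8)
The final statement (Proposition~\ref{prop:linmaplinpoly}) asserts that on an $r$-dimensional subspace $U$, the $\F_p$-linear maps $U\to\F_q$ are precisely the maps induced by linearised polynomials of degree at most $p^{r-1}$. The plan is to prove $W\subseteq V$ and $V\subseteq W$ separately, with a dimension/counting argument closing the gap. The inclusion $W\subseteq V$ is the easy direction and is essentially already recorded in the body of the paper: any linearised polynomial $M(X)=\sum_j a_jX^{p^j}$ satisfies $M(a\xi+b\zeta)=aM(\xi)+bM(\zeta)$ for $a,b\in\F_p$ because the Frobenius $x\mapsto x^p$ fixes $\F_p$ pointwise and is additive in characteristic $p$. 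Restricting such a map to $U$ yields an $\F_p$-linear map $U\to\F_q$, so every element of $W$ lies in $V$.

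The reverse inclusion $V\subseteq W$ is the substantive part, and the cleanest route is a counting argument rather than an explicit construction. First I would bound $|W|$ from above: the map sending a tuple $(a_0,\ldots,a_{r-1})\in\F_q^r$ to the induced map $U\to\F_q$ is surjective onto $W$ by definition, so $|W|\le q^r=p^{nr}$. Next I would compute $|V|$ exactly: an $\F_p$-linear map from an $r$-dimensional space $U$ into $\F_q$ is determined freely by the images of a basis of $U$, each image ranging over all of $\F_q$, so $|V|=q^r=p^{nr}$. Since we have already shown $W\subseteq V$, combining $|W|\le |V|$ with $W\subseteq V$ would force $|W|=|V|$ and hence $W=V$ as soon as we know the parametrisation of $W$ by coefficient-tuples is actually injective — equivalently, that distinct tuples of degree-$\le p^{r-1}$ linearised polynomials induce distinct maps on $U$.

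Thus the real obstacle, and the step deserving the most care, is injectivity of the coefficient-to-map assignment on $U$: I must show that a linearised polynomial $M$ of degree at most $p^{r-1}$ that vanishes identically on $U$ is the zero polynomial. This is where the degree bound $p^{r-1}$ is exactly calibrated. The argument is that if $M$ vanishes on all $p^r$ elements of $U$, then $M$ has at least $p^r$ roots; but a nonzero linearised polynomial of degree $\le p^{r-1}$ has degree $p^{r-1}<p^r$, hence at most $p^{r-1}$ roots in $\overline{\F_q}$, a contradiction unless $M=0$. With injectivity in hand, $|W|=q^r=|V|$ and the two-sided inclusion gives equality. The only point requiring vigilance is that the surjection from $\F_q^r$ onto $W$ be shown injective, which is precisely this root-counting step; everything else is routine bookkeeping about characteristic-$p$ additivity and dimension counts.
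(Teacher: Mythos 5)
Your proposal is correct and follows essentially the same route as the paper: the easy inclusion $W\subseteq V$, then a size comparison whose crux is injectivity of the coefficient-to-map assignment $\F_q^r\rightarrow W$, established by the identical root-counting argument (a linearised polynomial of degree at most $p^{r-1}$ vanishing on the $p^r$ elements of $U$ must be the zero polynomial). The only cosmetic difference is that you compare cardinalities ($|W|=|V|=q^r$) where the paper compares $\F_q$-dimensions of the two subspaces of $\F_q^U$.
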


\begin{proof}
First we observe that both sets are subspaces of the $\F_q$-vector space $\F_q^{U}$ of all maps $U\rightarrow\F_q$.
    Next, we point out that a map $U\rightarrow\F_q$ induced by a linearised polynomial is linear,
    which provides one inclusion $W\subset V$ of a subspace in another.
    It suffices to prove the equality of dimensions to conclude.
    The dimension of $V$ is $r$.
Thus it suffices to show that the linear map
$$
\F_q^r\rightarrow W,\quad a=(a_0,\ldots,a_{r-1})\mapsto F_a=(x\mapsto\sum_{i=0}^{r-1} a_ix^{p^i})
$$
is injective.
Now let $a=(a_0,\ldots,a_{r-1})$ be such that $F_a=0$.
Thus $\sum_{i=0}^{r-1} a_ix^{p^i}=0$ for all $x$ in $U$.
Then the polynomial $\sum_{i=0}^{r-1} a_iX^{p^i}$ of degree at most $p^{r-1}$ admits $|U|>p^{r-1}$ roots, which implies that this is the zero polynomial, whence $a=0$ and we are done.
\end{proof}
\begin{corollary}
\label{cor:equivdef}
    Let $f$ be a map $\F_q\rightarrow\F_q$.
    The following are equivalent.
    \begin{enumerate}
        \item $f$ has codimension $k$.
        \item There exists a subspace $U\leq \F_q$ of codimension $k$, a linear map $\ell: U\rightarrow \F_q$ and constants $b=(b_i)_{i=0,\ldots,p^{k-1}}$,
    such that,
    given a system $(v_i)_{i=0,\ldots,p^{k-1}}$ of representatives of $\F_q$ modulo $U$, we have
    $$\forall i\in\intv{0}{p^k-1},\,
    f(u+v_i)=\ell(u)+b_i.
    $$
    \item There exists a subspace $U\leq \F_q$ of codimension $k$, a linearized polynomial $M\in\F_q[X]$ of degree at most $p^{n-k-1}$
    and constants $(a_i)_{i=0,\ldots,p^{k-1}}$
    such that, denoting by $U_i,i=0,\ldots,p^{k-1}$ the cosets of $U$,
    we have $f(\xi)=M(\xi)+a_i$ for any $i$ and $\xi\in U_i$.
    \end{enumerate}
    
\end{corollary}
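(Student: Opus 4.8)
The plan is to route all three equivalences through condition~$(3)$, using Proposition~\ref{prop:linmaplinpoly} as the sole nontrivial ingredient. Fix $U$ of codimension $k$, so $r:=\dim U=n-k$, and fix representatives $(v_i)_{i=0}^{p^k-1}$ of the cosets $U_i=U+v_i$, writing each $\xi\in\F_q$ uniquely as $\xi=u+v_i$ with $u\in U$. The one recurring observation is that a linearised polynomial $N$ vanishing on $U$ is constant on each coset: indeed $N(u+v_i)=N(u)+N(v_i)=N(v_i)$. Thus modifying the linear part by such an $N$ only perturbs the additive constants, never the linear behaviour within a coset.

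For $(2)\Leftrightarrow(3)$, I would invoke Proposition~\ref{prop:linmaplinpoly} with this $U$: the $\F_p$-linear maps $\ell:U\to\F_q$ are exactly the restrictions $M|_U$ of linearised polynomials $M$ of degree at most $p^{r-1}=p^{n-k-1}$. Given~$(3)$, put $\ell=M|_U$ and $b_i=M(v_i)+a_i$; then $f(u+v_i)=M(u)+M(v_i)+a_i=\ell(u)+b_i$, which is~$(2)$. Conversely, given~$(2)$, pick via the Proposition a degree-bounded linearised $M$ with $M|_U=\ell$ and set $a_i=b_i-M(v_i)$ to recover~$(3)$.

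The implication $(3)\Rightarrow(1)$ is immediate, since $(3)$ is just the definition of codimension~$k$ with an extra degree cap on $M$. The substance lies in $(1)\Rightarrow(3)$. Suppose $f$ has codimension~$k$, witnessed by some linearised $M$ of a priori unbounded degree with $f(\xi)=M(\xi)+a_i$ on $U_i$. The restriction $M|_U$ is $\F_p$-linear, so Proposition~\ref{prop:linmaplinpoly} furnishes a linearised $\tilde M$ of degree at most $p^{n-k-1}$ agreeing with $M$ on $U$. Then $N:=M-\tilde M$ is linearised and vanishes on $U$, hence by the opening observation is constant, say equal to $c_i$, on $U_i$. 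Therefore $f(\xi)=\tilde M(\xi)+(c_i+a_i)$ on each $U_i$, which is exactly~$(3)$ with constants $a_i':=a_i+c_i$.

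The only delicate point is this degree-reduction step. One must confirm that swapping $M$ for the lower-degree $\tilde M$ preserves the affine-on-each-coset shape of $f$; this is guaranteed precisely because $M$ and $\tilde M$ are both linearised and coincide on $U$, so their difference is coset-constant and is harmlessly absorbed into the additive terms. No estimate or character sum is involved---the whole corollary is organisational bookkeeping around Proposition~\ref{prop:linmaplinpoly}, whose injectivity argument (a linearised polynomial of degree $\le p^{r-1}$ with more than $p^{r-1}$ roots must vanish) does the real work.
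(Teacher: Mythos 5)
Your proposal is correct and uses essentially the same approach as the paper: both reduce everything to Proposition~\ref{prop:linmaplinpoly} and absorb the coset-constant discrepancies (values $M(v_i)$, or the difference of two linearised polynomials agreeing on $U$) into the additive constants. The only difference is organisational -- the paper proves the cycle $(1)\Rightarrow(2)\Rightarrow(3)\Rightarrow(1)$, whereas you prove $(2)\Leftrightarrow(3)$ and handle $(1)\Rightarrow(3)$ directly, which is just the composition of the paper's two steps.
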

\begin{proof}
$(1)\Rightarrow (2):$
    If $f$ has codimension $k$,
    there exists a subspace $U\leq \F_q$ of codimension $k$,
    whose cosets are denoted by $U_i,i=0,\ldots,p^{k-1}$
    a linearized polynomial $M$ and constants $(a_i)_{i=0,\ldots,p^{k-1}}$
    such that for all $\xi\in U_i$ we have $f(\xi)=M(\xi)+a_i$.
    Given a system $(v_i)_{i=0,\ldots,p^{k-1}}$ of representatives of $\F_q$ modulo $U$, which may be ordered in such a way that
    $U_i=U+v_i$ for each $i$, we therefore have
    $f(u+v_i)=M(u+v_i)+a_i=M(u)+M(v_i)+a_i=M(u)+b_i$, for each $i$ as desired,
    where $b_i=a_i+M(v_i)$. Since $u\mapsto M(u)$ is a linear map $U\rightarrow\F_q$,
    we are done.

    $(2)\Rightarrow (3)$: Suppose $f$ is as in the hypothesis of the corollary,
    and let $U,\ell,b$ be as in this hypothesis.
    There exists a linearized polynomial $M$ of degree at most $p^{n-k-1}$ that induces the map $\ell$ on $U$ by Proposition \ref{prop:linmaplinpoly}.
    Let $U_i=U+v_i$ be the cosets of $U$.
    For any $i\in\intv{0}{p^k-1}$ and $\xi\in U_i$, write $\xi=u+v_i$ where $u\in U$, so that $M(\xi)=M(u)+M(v_i)=\ell(u)+M(v_i)=f(u)-b_i+M(v_i)=f(u)+a_i$
    where
    $a_i=M(v_i)-b_i$.

    $(3)\Rightarrow (1)$ is obvious.
\end{proof}

\begin{proposition}
\label{prop:ikik1}
    Let $\F_q$ be a field of characteristics $p$, where $q=p^n$ and $n\ge 1$. For any $0\leq k\leq n$,  let $I_k=I_k(q)$ be the number of self-mappings of $\F_q$ of codimension $k$.
    Then
    $$
    q^{p^k+n-k}\leq I_k(q)\leq q^{p^k+n-k+\min(n-k,k)}.
    $$
\end{proposition}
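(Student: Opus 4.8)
The plan is to reduce everything to the parametrization of codimension-$k$ maps furnished by Corollary~\ref{cor:equivdef}(2): such a map is obtained from a triple consisting of a codimension-$k$ subspace $U$, an $\F_p$-linear map $\ell\colon U\rightarrow\F_q$, and a tuple of constants $(b_i)_{i=0}^{p^k-1}$ indexed by the cosets of $U$, via $f(u+v_i)=\ell(u)+b_i$ for a fixed system of representatives $(v_i)$. Both inequalities then follow from counting these parameters, the upper bound by overcounting over all $U$ and the lower bound by establishing injectivity of the parametrization for a single fixed $U$.

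For the upper bound I would simply multiply the relevant counts. The number of $\F_p$-linear maps $U\rightarrow\F_q$ is $p^{(n-k)n}=q^{n-k}$, and the number of constant-tuples $(b_i)\in\F_q^{p^k}$ is $q^{p^k}$. The number of subspaces $U$ of codimension $k$ is the Gaussian binomial $\binom{n}{k}_p$, which I would bound cleanly by $q^{\min(k,n-k)}$: counting ordered linearly independent $j$-tuples yields $\binom{n}{j}_p=\prod_{i=0}^{j-1}(q-p^i)\big/\prod_{i=0}^{j-1}(p^j-p^i)$, whose numerator is strictly less than $q^j$ while the denominator is at least $1$, so $\binom{n}{j}_p<q^j$; applying this with $j=k$ and, via the symmetry $\binom{n}{k}_p=\binom{n}{n-k}_p$, with $j=n-k$ gives $\binom{n}{k}_p<q^{\min(k,n-k)}$. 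Since by Corollary~\ref{cor:equivdef} every codimension-$k$ map arises from at least one such triple, the parametrizing map is surjective onto these functions, and multiplying the three counts yields $I_k(q)<q^{\min(k,n-k)}\cdot q^{n-k}\cdot q^{p^k}=q^{p^k+n-k+\min(n-k,k)}$, as claimed.

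For the lower bound I would fix one codimension-$k$ subspace $U$ together with a system of representatives $(v_i)$ arranged so that $v_0=0$, and consider only the maps $f$ produced from this single $U$ as $\ell$ and $(b_i)$ vary. The key point is that this restricted parametrization is injective: from the resulting $f$ one reads off $b_i=f(v_i)$ by setting $u=0$ in the $i$th coset, and then $\ell(u)=f(u)-b_0=f(u)-f(0)$ on $U$. Hence distinct data $(\ell,(b_i))$ give distinct maps $f$, producing exactly $q^{n-k}\cdot q^{p^k}=q^{p^k+n-k}$ pairwise distinct maps, each of codimension $k$, so $I_k(q)\geq q^{p^k+n-k}$.

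The main obstacle is really only the subspace count: a naive estimate of $\binom{n}{k}_p$ carries the constant $\prod_{i\geq1}(1-p^{-i})^{-1}$, which would not fit inside the stated exponent, so one must instead use the strict inequality $\binom{n}{k}_p<q^{\min(k,n-k)}$ coming from the basis-counting argument above in order to absorb it without any leftover factor. The remaining steps are bookkeeping: confirming surjectivity of the parametrizing map for the upper bound, and verifying the elementary inversion formulas $b_i=f(v_i)$ and $\ell=f-f(0)$ that yield injectivity for the lower bound.
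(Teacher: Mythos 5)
Your proof is correct and takes essentially the same approach as the paper: both arguments parametrize codimension-$k$ maps by triples $(U,\ell,(b_i))$ via Corollary~\ref{cor:equivdef}, obtain the lower bound from injectivity of this parametrization for a single fixed $U$, and obtain the upper bound by multiplying $q^{n-k}\cdot q^{p^k}$ with a bound of $q^{\min(k,n-k)}$ on the number of codimension-$k$ subspaces. The only cosmetic difference is that you bound the subspace count by the Gaussian binomial $\binom{n}{k}_p<q^{\min(k,n-k)}$, while the paper gets the same bound more crudely by counting ordered bases of $U$ (or of $U^\perp$).
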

\begin{proof}
    We prove first the lower bound.
    Fix a subspace $U\leq \F_q$ of codimension $k$. The number of
    linear maps $\ell$ on $U$ is $q^{n-k}$. The number of tuples
    $a=(a_i)_{i=0,\ldots,p^{k}-1}$ is $q^{p^k}$.
    Two pairs $(\ell_1,a_1),(\ell_2,a_2)$ yieldings the same map
    must be equal because then $\ell_1-\ell_2=a_1-a_2$ is a constant linear map, whence both terms are 0.
    By Corollary \ref{cor:equivdef}, this implies the lower bound.

    Let us turn to the upper bound.
    A map of codimension $k$ is entirely determined by a triple $(U,\ell,a)$ where $U\leq \F_q$ is a subspace of codimension $k$,
    $\ell:U\rightarrow\F_q$ is linear, and $a\in\F_q^{p^k}$.
    We already counted pairs $(\ell,a)$ above. A space $U$ of codimension $k$ is entirely determined by a basis $(v_1,\ldots,v_{n-k})$, of which there are at most $q^{n-k}$. It is alternatively determined by a basis $(u_1,\ldots,u_k)$ of its orthogonal $U^\perp$ (with respect to a fixed inner product of $\F_q$). Thus, there
    are at most $q^{\min(n-k,k)}$ choices for $U$.
\end{proof}
\begin{corollary}
\label{cor:ikik1}
    As soon as $p>2$ or $k>2$, we have $I_k(q)\geq qI_{k-1}(q)$, in particular $I_{k-1}(q)=o(I_k(q))$ as $q$ tends to infinity.
    Further, in the regime where $p^k/n$ tends to infinity, we have $I_{k-1}(q)\ll_\varepsilon I_k(q)^{1/p+\varepsilon}$ for every $\varepsilon>0$.
\end{corollary}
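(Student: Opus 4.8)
The plan is to derive both assertions directly from the two-sided estimate of Proposition~\ref{prop:ikik1}, namely $q^{p^k+n-k}\le I_k(q)\le q^{p^k+n-k+\min(n-k,k)}$, purely by comparing exponents. Throughout I would write $L_k=p^k+n-k$ for the lower exponent of $I_k(q)$ and $E_{k-1}=p^{k-1}+n-k+1+\min(n-k+1,k-1)$ for the upper exponent of $I_{k-1}(q)$ furnished by the proposition at index $k-1$, so that $I_k(q)\ge q^{L_k}$ and $I_{k-1}(q)\le q^{E_{k-1}}$.

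For the first assertion I would bound $I_k(q)/I_{k-1}(q)\ge q^{L_k-E_{k-1}}$ and compute $L_k-E_{k-1}=p^{k-1}(p-1)-1-\min(n-k+1,k-1)$. Discarding the smaller side of the minimum, i.e.\ using $\min(n-k+1,k-1)\le k-1$, it suffices to verify $p^{k-1}(p-1)\ge k+1$, which I would settle by an elementary induction: for $p\ge 3$ one has $p^{k-1}(p-1)\ge 2\cdot 3^{k-1}\ge k+1$ for all $k\ge 1$, and for $p=2$ with $k\ge 3$ one has $2^{k-1}\ge k+1$. In every such case $L_k-E_{k-1}\ge 1$, hence $I_k(q)\ge qI_{k-1}(q)$, and since $q\to\infty$ this gives $I_{k-1}(q)=o(I_k(q))$.

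For the second assertion I would fix $\varepsilon>0$ and simplify the two exponents: the same bound $\min(n-k+1,k-1)\le k-1$ yields $E_{k-1}\le p^{k-1}+n$, while $L_k\ge p^k$ gives $(1/p+\varepsilon)L_k\ge p^{k-1}+\varepsilon p^k$. Consequently $E_{k-1}\le(1/p+\varepsilon)L_k$ holds as soon as $n\le\varepsilon p^k$. In the regime $p^k/n\to\infty$ this inequality is satisfied for all but finitely many members of the family, and for those members $I_{k-1}(q)\le q^{E_{k-1}}\le q^{(1/p+\varepsilon)L_k}\le I_k(q)^{1/p+\varepsilon}$; absorbing the finitely many exceptional tuples into a constant $C_\varepsilon$ then yields $I_{k-1}(q)\ll_\varepsilon I_k(q)^{1/p+\varepsilon}$.

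The arithmetic is routine once the exponents are written down, so I do not expect a genuine obstacle. The only points requiring care are the choice of which branch of $\min(n-k+1,k-1)$ to throw away, since $\min\le k-1$ is precisely what makes the failure region coincide with ``$p=2$ and $k\le 2$'' and nothing larger, and the verification at the tightest instance $p=2,\,k=3,\,n\ge 4$, where $L_k-E_{k-1}=1$ exactly, with no slack. I would double-check that boundary case explicitly to confirm that the hypothesis ``$p>2$ or $k>2$'' is both necessary and sufficient for the step $I_k(q)\ge qI_{k-1}(q)$.
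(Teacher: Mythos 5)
Your proof is correct and takes essentially the same route as the paper: both deduce the corollary from Proposition~\ref{prop:ikik1} by comparing exponents, reducing the first claim to $p^{k-1}(p-1)\ge k+1$ (the paper splits into cases according to which branch of the $\min$ is active, while you simply discard the smaller branch and land on the same bound $I_{k-1}\le q^{p^{k-1}+n}$) and the second claim to $n\le\varepsilon p^k$ for all but finitely many terms. One small caveat: your closing aside that ``$p>2$ or $k>2$'' is \emph{necessary} for $I_k(q)\ge qI_{k-1}(q)$ is not established by this argument---the exponent comparison only shows that the sufficient chain of inequalities fails when $p=2$ and $k\le 2$, not that the inequality itself fails there.
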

\begin{proof}
    We use Proposition \ref{prop:ikik1}, which gives $I_k\geq q^{p^k+n-k}$
    and $I_{k-1}\leq q^{p^{k-1}+\min (2(n-k+1),k-1)}$.
    When $k-1\leq n/2$, we have $I_{k-1}\leq q^{p^{k-1}+n}$ and $p^k-p^{k-1}\geq k+1$ as soon as $p>2$ or $k>2$, whence $I_k(q)\geq qI_{k-1}(q)$ as desired. Otherwise $k-1<n/2$, so that 
    $n-k+1+\min (n-k+1,k-1)=2(n-k)<n-k+k$
    yet again $p^k-p^{k-1}\geq k+1$, which concludes the proof of the first point.

    For the second point, we write
    $
    I_{k-1}\leq I_k^{1/p}q^{2n}$
    and since $2n\leq \varepsilon p^k$ for all but finitely many terms, we conclude.
\end{proof}
\begin{corollary}
\label{cor:almostall}
    As $q$ tends to infinity, almost all self-mappings of $\F_q$ have least additive index $q$.
\end{corollary}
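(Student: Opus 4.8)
The plan is to deduce the statement directly from the counting estimates in Proposition \ref{prop:ikik1} and Corollary \ref{cor:ikik1}. First I would translate the conclusion into the language of codimension: a self-mapping has least additive index $q=p^n$ exactly when its least codimension equals $n$, and it fails this exactly when it admits a codimension-$(n-1)$ representation. Since having codimension $k$ entails having codimension $k'$ for every $k'\ge k$, the maps of least codimension strictly below $n$ are precisely those counted by $I_{n-1}(q)$.

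Next I would pin down the total count. Choosing $U=\{0\}$, of codimension $n$, places each point in its own coset and allows an arbitrary constant there, so every self-mapping has codimension $n$; thus $I_n(q)=q^q$ is the number of all self-mappings of $\F_q$, consistent with the lower bound $q^{p^n+n-n}=q^q$ of Proposition \ref{prop:ikik1}.

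The crux is then a one-line appeal to Corollary \ref{cor:ikik1} with $k=n$, which gives $I_n(q)\ge q\,I_{n-1}(q)$ whenever $p>2$ or $n>2$, so that the fraction of non-generic maps satisfies
\[
\frac{I_{n-1}(q)}{I_n(q)}\le \frac{1}{q}.
\]
Letting $q\to\infty$ forces this ratio to $0$, which is exactly the assertion that almost all self-mappings have least codimension $n$, hence least additive index $q$.

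I do not expect a genuine obstacle, since the substantive counting is already carried out in the cited results. The only point needing a word of care is the hypothesis ``$p>2$ or $n>2$'' of Corollary \ref{cor:ikik1}: the excluded cases are $p=2$ with $n\le 2$, that is the two values $q\in\{2,4\}$, which play no role in the limit because along any sequence of prime powers $q=p^n\to\infty$ one eventually has $p>2$ or $n>2$.
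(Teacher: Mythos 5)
Your proposal is correct and follows essentially the same route as the paper: both reduce the statement to the inequality $I_n(q)\ge qI_{n-1}(q)$ from Corollary \ref{cor:ikik1} applied with $k=n$, identify $I_n(q)=q^q$ as the total number of self-mappings, and observe that the hypothesis ``$p>2$ or $k>2$'' is automatic once $q=p^n$ is large. Your extra remarks (monotonicity of codimension identifying the exceptional maps with those counted by $I_{n-1}(q)$, and the explicit exclusion of $q\in\{2,4\}$) just make explicit what the paper leaves implicit.
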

\begin{proof}
    Direct application of Corollary \ref{cor:ikik1},
    with $k=n$; note that either $p>2$ or $k=n>2$ since $q=p^n$ tends to infinity, so
    $I_{n-1}=o(I_n)$, where $I_n=q^q$ is the total number of self-mappings.
\end{proof}
\section*{Acknowledgments}

This research was funded by the Austrian Science Fund (FWF) [10.55776/PAT4719224].


\begin{thebibliography}{99}

\bibitem{bash96} E. Bach, J. Shallit, 
Algorithmic number theory. Vol. 1.
Efficient algorithms.
Found. Comput. Ser.
MIT Press, Cambridge, MA, 1996.


\bibitem{elsh01} E. El Mahassni, I. Shparlinski, Polynomial representations of the Diffie-Hellman mapping. Bull. Austral. Math. Soc. 63 (2001), 467--473. 


\bibitem{gl11}
A. Glibichuk, Sums of powers of subsets of arbitrary finite fields. 
Izv. Ross. Akad. Nauk Ser. Mat. 75 (2011), no. 2, 35--68; translation in
Izv. Math. 75 (2011), no. 2, 253--285.

\bibitem{glru09}
A. Glibichuk and M. Rudnev, On additive properties of product sets in an arbitrary finite field. J. d'Analyse Math. 108 (2009), 159--170.  

\bibitem{haio08}
D. Hart, A. Iosevich, 
Sums and products in finite fields: an integral geometric viewpoint. Radon transforms, geometry, and wavelets, 129--135.
Contemp. Math., 464
American Mathematical Society, Providence, RI, 2008.

\bibitem{iswi22} L. Işık, A. Winterhof, 
On the index of the Diffie-Hellman mapping.
Appl. Algebra Engrg. Comm. Comput. 33 (2022), no. 5, 587--595.

\bibitem{ko79}
S. V. Konyagin.
The number of solutions of congruences of the $n$th degree with one unknown.
Mat. Sb. (N.S.) 109(151) (1979), no. 2, 171--187.


\bibitem{lini83} R. Lidl, H. Niederreiter, 
Finite fields.
Encyclopedia Math. Appl., 20
Addison-Wesley Publishing Company, Advanced Book Program, Reading, MA, 1983.

\bibitem{mewi02} W. Meidl, A. Winterhof, 
A polynomial representation of the Diffie-Hellman mapping.
Appl. Algebra Engrg. Comm. Comput. 13 (2002), no. 4, 313--318.

\bibitem{mewi08} G. C. Meletiou, A. Winterhof, 
Interpolation of the double discrete logarithm. Arithmetic of finite fields, 1--10.
Lecture Notes in Comput. Sci., 5130,
Springer, Berlin, 2008.

\bibitem{mov97} A. J. Menezes, P. van Oorschot, S. A. Vanstone, 
Handbook of applied cryptography.
CRC Press Ser. Discrete Math. Appl.
CRC Press, Boca Raton, FL, 1997.


\bibitem{HFF} G. L. Mullen, D. Panario (eds.),
Handbook of finite fields.
Discrete Math. Appl. (Boca Raton)
CRC Press, Boca Raton, FL, 2013.


\bibitem{rewa22} L. Reis, Q. Wang, The additive index of polynomials over finite fields, 	Finite Fields Appl. 79 (2022), Paper No. 102002.

\bibitem{sh03} I. Shparlinski, 
Cryptographic applications of analytic number theory.
Complexity lower bounds and pseudorandomness.
Progr. Comput. Sci. Appl. Logic, 22
Birkhäuser Verlag, Basel, 2003.

\bibitem{stangl}
W. D. Stangl, Counting squares in $\Z_n$. Mathematics Magazine, 69 (1996), no. 4, 285--289. 

\bibitem{wi01} A. Winterhof, 
A note on the interpolation of the Diffie-Hellman mapping.
Bull. Austral. Math. Soc. 64 (2001), no. 3, 475--477.


\end{thebibliography}
\end{document}